 \DeclareMathOperator{\Zeros}{Zeros}
  \DeclareMathOperator{\zeros}{Zeros}
\DeclareMathOperator{\tor}{Tor}
 \DeclareMathOperator{\Res}{Res}
 \DeclareMathOperator{\gm}{\mathbb G_m}
\def\norm{{\mathrm{norm}}}
\DeclareMathOperator{\Bl}{\mathcal {B}\l}
 \DeclareMathOperator{\bl}{b\l}
\def\inv{^{-1}}
\def\sepp{{\mathrm{sepp}}}
\def\ssep{^{\mathrm{sep}}}
\DeclareMathOperator{\mgbar}{\overline\M_g}
\DeclareMathOperator{\del}{\partial}
 \DeclareMathOperator{\Hom}{Hom}
\DeclareMathOperator{\coker}{coker}
\DeclareMathOperator{\bD}{\mathbb D}
\def\refp #1.{(\ref{#1})}
\newcommand{\A}{\mathcal{A}}
\newcommand{\M}{\mathcal{M}}
\newcommand{\bE}{\mathbb{E}}
\newcommand{\sE}{{\mathrm s\mathbb E}}
\newcommand{\aE}{{\leftidx{_\mathfrak a}{\mathbb E}{}}}
\newcommand{\lbl}[1]{\label{#1}}
\newcommand{\Cal}[1]{\mathcal #1}
\newcommand{\ul}[1]{\underline {#1}}
\def\sbr #1.{^{[#1]}}
\def\sfl #1.{^{\lfloor #1\rfloor}}
\newcommand{\myit}[1]{\emph{\ {#1}\ }}
\newcommand{\mymax}{{\max}}
\newcommand{\mydiamond}[4]
{\begin{matrix}&& #1&&\\ &\swarrow&&\searrow&\\ #2&&&& #3\\
&\searrow&&\swarrow&     \\&& #4&&
\end{matrix}
}
\newcommand{\mycd}[4]{
\begin{matrix}
#1&\to&#2\\\downarrow&\circlearrowleft&\downarrow\\
#3&\to&#4
\end{matrix}
}
\newcommand\twostack [2]
\newcommand{\sectionend}
{\[\circ\gtrdot\ggg\mathbb{\times}\lll\lessdot\circ\]}
\newcommand\mymatrix[2] {\begin{matrix} #1\\ #2\end{matrix}}
\newcommand\red{{\mathrm red}}
\newcommand\vir{{\mathrm{vir}}}
\newcommand\sep{{\mathrm{sep}}}
\newcommand\phel{{\rm{PHEL }}}
\newcommand\vtheta{{\underline{\theta}}}
\DeclareMathOperator{\Mod}{Mod}
\def\inv{^{-1}}
\def\?{{\bf{??}}}
\def\iso{\ isomorphism\ }
\newcommand{\bn}{{\ Brill-Noether\ }}
\def\rx{\leftidx{_\mathrm R}{X}{}}
\def\lx{{\leftidx{_\mathrm L}{X}{}}}
\def\a{{\frak a}}
\def\M{\Cal M}
\def\A{\Bbb A}
\def\C{\mathbb C}
\def\P{\mathbb P}
\def\R{\mathbb R}
\def\Z{\mathbb Z}
\def\ord{\text{\rm ord}}
\def\sym{\text{\rm Sym} }
\def\Q{\mathbb Q}
\def\L{\mathcal L}
\def\rmL{\mathrm L}
\def\O{\mathcal O}
\def\Sym{\textrm{Sym}}
\def\g{\mathfrak g}
\def\m{\mathfrak m}
\def\1/2{\frac{1}{2}}
\def\I{\mathcal{ I}}
\def\im{\text{im}}
\def\simto{\stackrel{\sim}{\rightarrow}}
\def\2{{[2]}}
\def\l{\ell}
\def\nl{\newline}
\def\he{\mathcal{HE}}
\def\hebar{\bar{\mathcal{HE}}}
\def\<{\langle}
\def\>{\rangle}
\def\im{\text{im}}
\def\2{{[2]}}
\def\l{\ell}
\def\scl #1.{^{\lceil#1\rceil}}
\def\spr #1.{^{(#1)}}
\def\sbc #1.{^{\{#1\}}}
\def\subpr#1.{_{(#1)}}
\newcommand{\aphi}{\leftidx{_\mathfrak a}\phi}
\def\beq{\begin{equation*}}
\def\eeq{\end{equation*}}
\newcommand{\sing}{{\mathrm{sing}}}
\newcommand{\lf}[1]{\leftidx{_{\ \mathrm L}}{#1}{}}
\newcommand{\rt}[1]{\leftidx{_{\ \mathrm R}}{#1}{}}
\newcommand{\lfrt}[1]{\leftidx{_{\ \mathrm LR}}{#1}{}}
\def\g3{{\Gamma\spr 3.}}
\def\ggg{{\Gamma\spr 3.}}
\def\hep{hyperelliptic\ }
\def\nhep{non-hyperelliptic\ }
\def\eva{essentially very ample\ }
\def\fkb {\frak b}
\newcommand{\md}[1]{\leftidx{_{\ \mathrm M}}{#1}{}}
\def\cJ{\mathcal J}
\def\rR{\mathrm R }
\def\rL{\mathrm L }
\def\ae{\leftidx{_{\mathfrak a}}{\mathbb E}}
\def\aE{\leftidx{_{\mathfrak a}}{\mathbb E}}
\newcommand{\eqspl}[2]{
\begin{equation}\label{#1}
\begin{split}
#2\end{split}\end{equation}}
\newcommand{\eqsp}[1]{\begin{equation*}
\begin{split}#1\end{split}\end{equation*}}
\newcommand{\exseq}[3]{
0\to #1\to #2\to #3\to 0
}
\newcommand{\beginalphaenum}{
\begin{enumerate}\renewcommand{\labelenumi}{ }
\item \begin{enumerate}
}
\def\eex{\end{rm}\end{example}}
\newcommand\newsection[1]{\section{#1}\setcounter{equation}{0}
}
\newcommand\newsubsection[1]{\subsection{#1}\setcounter{equation}{0}}
\newcommand{\be}{\mathbb E}
\newcommand\dual{\ \check{}\ }
\newtheorem{thm}{Theorem}  [section]
\newtheorem*{mainthm}{Main Theorem}
\newtheorem*{thm*}{Theorem}
\newtheorem*{prop*}{Proposition}
\newtheorem{cor}[thm]{Corollary}
\newtheorem*{cor*}{Corollary}
\newtheorem{lem}[thm]{Lemma}
\newtheorem{lem*}{Lemma}
\newtheorem{claim}[thm]{Claim}
\newtheorem*{claim*}{Claim}
\newtheorem{prop}[thm]{Proposition}
\newtheorem{propdef}[thm]{Proposition-definition}
\newtheorem{defn}[thm]{Definition}
\theoremstyle{remark}
\newtheorem{rem}[thm]{Remark}
\newtheorem{crit-rem}[thm]{Critical remark}
\newtheorem{remarks}[thm]{Remarks}
\newtheorem{example}[thm]{Example}
\newtheorem*{example*}{Example}
\newtheorem*{defn*}{Definition}
\begin{document}

\title {\large Modifications of Hodge bundles and\\ enumerative
geometry :\\ \bigskip
 the stable hyperelliptic locus}
\normalsize
\author
{Ziv Ran}
\date {\today}
\address{\tiny  {\newline Ziv Ran \newline University of California\newline
Mathematics Department,  Big Springs Rd. Surge Facility
\newline Riverside CA 92521 US
\newline ziv.ran @ ucr.edu}}
 \subjclass{14N99,
14H99}\keywords{Hilbert scheme, stable curve, hyperelliptic curve}

\begin{abstract}
We study the stable hyperelliptic locus, i.e. the closure, in the Deligne- Mumford Moduli
 of stable curves, of the locus of smooth hyperelliptic curves. Working on a suitable blowup of the relative Hilbert
scheme (of degree 2) associated to a family of stable curves,
we construct a bundle map ('degree-2 Brill-Noether') from
a modification of the Hodge bundle to a tautological bundle,
whose degeneracy locus is the natural lift of the stable hyperelliptic locus
plus a simple residual scheme. Using intersection theory on Hilbert schemes and Fulton-MacPherson residual intersection theory, the class of the structure sheaf and various
 other sheaves supported on the stable hyperelliptic locus  can be computed by
the Porteous formula and similar tools.

\end{abstract}

\maketitle
\tableofcontents
\section*{Introduction}
Our main aim is to prove Theorem \ref{fund-class-thm} below, 
which is a precise version of
the following
\begin{mainthm}[First approximation]
Given a family $X/B$ of stable curves, there is a bundle map over an explicit
birational modification of its second symmetric product , whose degeneracy
locus consists of the closure of the hyperelliptic locus, which is reduced of the expected dimension, 
plus an explicit
and computable excess locus. In this way the fundamental class of the 
closure of the hyperelliptic locus
can be computed as an element of the Mumford tautological ring.
\end{mainthm}
This paper is a continuation of our study of finite subschemes of
families of nodal-or-smooth curves (see e.g. \cite{geonodal}, \cite{canonodal}, \cite{internodal}). Technically, our aim is to introduce a new tool in the  global and enumerative
geometry of the moduli space
$\mgbar$ of stable curves: modified Hodge bundles.
The rationale for these is a pervasive problem which has long stood in the way of
applying 'classical' methods to $\mgbar$:
that equations (e.g. degeneracy conditions for $g^r_d$'s)  describing
geometry on smooth curves become excessively
degenerate on singular,
especially reducible nodal
curves, and accordingly fail to define
a limit, in any good sense, of the appropriate locus
(e.g. $g^r_d$ locus) on a smooth curve.
Accordingly, most recent work on $\mgbar$ has focused on
extrinsic, Gromov-Witten type methods, studying maps of curves
to other varieties; see \cite{vak} for a survey of some of this work and references. \par
Nevertheless,  this paper represents
the beginning of an attempted attack on the aforementioned
excess degeneracy problem, based
\emph{grosso modo} on resolving the excess through  boundary modifications of Hodge bundles.
This approach has its roots in the work of Harris and Mumford
\cite{har-mum} on the Kodaira dimension of $\mgbar$, especially their computation of the fundamental class of the closure of
the divisor of curves carrying a $g^1_{\frac{g+1}{2}}$.
The basic older insight is that the appropriate boundary object
corresponding to a linear system is a \emph{collection}
of linear systems on components, or rather certain
subcurves, of the boundary curve. The new 'twist' (double-entendre) is that those systems can be accessed via
a suitable map of vector bundles.
\par
 Specifically, we are concerned
here with $g^1_2$'s, i.e. the locus of smooth hyperelliptic curves and its closure  in 
$\mgbar, g\geq 3$, viewed via degeneracy
(non-very ampleness) of the canonical system.
The usual description in the smooth case is
in terms of the 'degree-2 \bn map' (evaluation map)
\[\phi: \bE\to \Lambda_2(\omega)\]
where $\be$ is the Hodge bundle and $\Lambda_2(\omega)$ denotes the tautological bundle of rank 2 associated to the relative
canonical bundle, defined over the degree-2 relative
Hilbert scheme of the universal family
(which for smooth curves coincides with the relative symmetric product). Precisely, the degeneracy locus of $\phi$
consists of 'hyperelliptic pairs' $(C,\a)$ where $C$ is a hyperelliptic
curve and $\a$ is a divisor in its unique $g^1_2$.
\par Now all of the above data, including $\phi$,
 extend over $\mgbar$
and its associated relative Hilbert scheme. But
the degeneracy locus of the extended map is \emph{not}
the closure of the locus of hyperelliptic pairs.
It contains, e.g. pairs $(C_1\cup C_2, \a)$
where $C_1\cup C_2$ is a reducible stable curve and
$\a$ is a hyperelliptic divisor on $C_1$,
which cannot be the limits of a smooth
hyperelliptic curve.
In essence, this issue
is what this paper is about.\footnote{The presence of extraneous, often excessive,
boundary components is a difficulty in Gromov-Witten theory as well.}
The basic idea is an obvious one: enlarge $\be$ at the boundary by allowing (carefully controlled) poles, so that $\phi$ remains defined but has smaller degeneracy locus, because it effectively accesses
a larger linear system. More precisely, given a family
$X/B$ of stable curves of genus $g\geq 3$, the enlargement is accomplished via suitable \emph{echelon modifications}
(a generalizations of the familiar elementary modifications,
see \cite{echelon}) along certain boundary divisors.
These divisors are associated to the separating nodes
and separating pairs of nodes (binodes); the latter case requires blowing up the Hilbert scheme. The ultimate result
is the following (see Theorems \ref{main-thm}  and \ref{fund-class-thm} for the precise statements)
\begin{mainthm}[Second approximation]
There is a bundle $\ae$ called the
 \emph{azimuthal Hodge bundle}, which is an echelon modification
 of the Hodge bundle  defined over a blowup $X\sbc 2._B$ of
the relative Hilbert scheme $X\sbr 2._B$, together
with a mapping, called the \emph{azimuthal \bn map}
\[\aphi: \ae\to\Lambda_2(\omega)\]
whose degeneracy scheme is the union of
 a lift of the hyperelliptic locus and the locus of schemes supported on some separating node.
 Via the excess Porteous formula, the fundamental class of the lifted hyperelliptic locus can be computed
 as an element of the Mumford tautological ring.
\end{mainthm}
The  contribution to the degeneracy locus from the
 locus of schemes supported at nodes can be easily computed using residual intersection theory;
 moreover, its image on $B$ vanishes, essentially because the dualizing sheaf restricts to
 the structure sheaf on a node (see Cor. \ref{fund-class-base-cor}). Therefore,
the class of the hyperelliptic locus can be computed
 by the Theorem using  Porteous's formula.\par
 To be fair, the phrase 'can be computed' as used above should be understood,
 for $g\geq 4$, 
 in the sense of 'reduces to routine, if tedious, calculations'; these calculations can be handled in
 principle by
 Gwoho Liu's Macnodal program (see \cite {internodal}), though some of the details have yet to be worked out.
 \par
 It should be mentioned that in the case of genus 3, the Brill-Noether
 map as is has some 'extraneous' (non-hyperelliptic) degeneracy loci, but these
 are not 'excessive' in dimension, and their contribution
 can be easily computed, leading to a computation of the hyperelliptic
 class in $\overline\M_3$, confirming a formula of Harris-Mumford. See
 \cite{bleier-genus3}, \cite{esteves-genus3} or \cite{internodal}, \S 4.5.
  The genus-4 case is the first involving excessive
  degeneration of Brill Noether. It is discussed in detail in \S \ref{genus-4-example},
  where we recover a formula of Faber-Pandharipande (\cite{faber-pandh}, Prop. 5).\par
The azimuthal \bn map is related to a new geometric structure
on the boundary, encoded in the \emph{sepcanonical system}
on boundary curves. This is a collection of (usually incomplete) linear systems on certain subcurves ('2-separation
components'); on a given subcurve $Y$, the sepcanonical system
is a twist of the canonical system of $Y$ which
reflects the geometry of $X$ as a whole. Then
on subschemes of $Y$, the azimuthal
\bn map of $X$ is 'essentially' the evaluation map associated to the sepcanonical system.\par
This paper is divided in two parts. Part 1
culminates in the proof of the Main Theorem for curves of
\emph{semicompact type}, i.e. those whose dual graph contains
no circuits of size $>2$. Part 2 extends the result to the general case.
See the introductions to each part for further organizational details.\par
\subsection*{Further developments}
The methods of this paper seem to extend to the case of pencils
(1-dimensional systems) of degree $m>2$. This
corresponds to studying the submaximal-rank locus or maximal minors ideal of the
Brill-Noether map, and involves two main new steps (the details will be
pursued elsewhere):\par
(i) modifying the tautological bundle as well as the Hodge bundle;\par
(ii) performing bundle modifications on the intermediate boundary components,
e.g. those birational to $ X_1\spr i.\times X_2\spr m-i.$, for a
compact-type boundary curve
$X_0=X_1\cup X_2, i=0,...,m$, rather than just the extremal components (where $i=0$ or $m$).
These modifications eliminate extraneous boundary components.\par
Being able to perform these bundle modifications, especially for curves with complicated dual graph,
 would require blowing up the Hilbert scheme, extending the
azimuthal modifications considered here.\par

In the case of 2-dimensional systems, corresponding to the sub-submaximal
rank (corank 2 ) locus or submaximal minors ideal of Brill-Noether, extraneous but nonexcessive components arise, and
it appears the present methods may apply. Going beyond 2-dimensional systems however
gives rise to extraneous and excessive boundary components, and it's not clear how
to account for their contributions.\par
 See \cite{faber-pandh} for another approach, based on Gromov-Witten
theory, to computing the fundamental class of hyperelliptic
and similar loci related to maps of curves to $\P^1$.\par
\subsection*{Acknowledgment}
I thank the referee for his very careful reading of the paper and highly detailed comments
and corrections, including  his insistence that we reproduce the Faber- Pandharipande formula
in genus 4; these have resulted in a much improved paper. I thank Ann Kostant for her patient and determined
assistance.
\Large
\begin{center}{\part{{Semicompact type}}}
\normalsize \end{center}
\normalsize
This part is mainly devoted to proving the Main Theorem in
the case of curves of semi-compact type (defined below);
however, some topics are developed in greater generality
for use in the general case, to be completed in Part 2 
(as well as in potential further applications).
In \S \ref{smooth-hypell-sec} we review some standard facts about smooth hyperelliptic curves and derive normal form for
some objects associated to them, such as the \bn map.
\par
In \S\ref{hilb-review} we review some constructions and properties of Hilbert schemes of curves in the very special
case of degree 2.\par
The core of the paper begins in \S \ref{bn-sep}, which
constructs and studies the modified Hodge bundle in the case of a separating node, using an appropriate echelon modification.
The object that appears on the boundary turns out to be
closely related to sepcanonical systems.\par
\S \ref{bn-bisep} extends the modified \bn map, first to
the case of a single separating binode, then more generally
to a disjoint collection of separating nodes and
binodes. The binode case, because it occurs in codimension
2, requires a blowup of the Hilbert scheme that we call
an \emph{azimuthal} modification. This amounts to adding
some tangential data, called an azimuth, at the binode.
On the modified Hilbert scheme, a modified \bn map can be constructed largely as in the separating node case, again
leading to an object closely related to the sepcanonical
system. We
then prove a provisional form of our main theorem, stating
that for curves of 'semi-compact type', i.e. whose dual graph has no cycles of size $>2$, the degeneracy locus of the modified \bn map consists of an appropriate lift
of the hyperelliptic locus, plus the locus of
all schemes supported on some separating nodes. \par
In Part 2, we will extend the latter result to general
stable curves, derive a formula for the fundamental
class of the stable hyperelliptic locus, and study
intersection theory on the azimuthal modification of
the Hilbert scheme.\bigskip

\subsection{Riemann-Roch without denominators for anti-self-dual}\label{asd}
The purpose of this brief section
 is to point out that Fulton's
Riemann-Roch without [integer] denominators (see \cite{ful}, Ch. 15)
can be simplified in the case of Anti-Self-Dual bundles (defined below),
so as to eliminate (characteristic class) denominators other
than those of the form \mbox{$1+D, D=\ $divisor,} which are easy to invert.
These results will be used in \S \ref{chern-of-azi}.
\par
 A vector bundle $E$ on a
scheme $Y$ is said to be \emph{anti-self-dual} or ASD if
\[c(E)c(E\dual)=1,\] where $E\dual $ denotes the dual bundle.
\begin{example}
As importantly observed by Mumford \cite{Mu},  the Hodge bundle $\be_g$ on $\mgbar$
 has the ASD property. Consequently, the pullback  of $\be_g$ by any map $Y\to\mgbar$
 also has the ASD property.\par
\end{example}
Let $D$ be a Cartier divisor on a variety $Y $,
let $i_D:D\to Y$ be the inclusion map, and let $E$ be a vector bundle on $D$.
The ASD property for a bundle $E$ allows us to avoid the
computationally unwieldy process of dividing by $c(E)$ . Therefore,
computations involving the Riemann-Roch without denominators
(\cite{ful}, \S15.3), even in the divisor case, are simplified when
the bundle is ASD because rather than divide by $c(E(-D))$ we need
only divide by powers of $c(\O_D(-D))=1-[D]_D$. To elaborate, define
a polynomial $Q$ in Chern classes of a rank-$e$ bundle $E$ and a
line bundle $L$ by \eqspl{}{ c(E\otimes L)=c(E)+[L]Q(L, E) } or
explicitly, \eqspl{}{
Q(L,E)&=\sum\limits_{p=0}^{e-1}\sum\limits_{i=0}^{p}\binom{e-i}{p+1-i}
c_i(E)c_1(L)^{p-i}\\&=e+\binom{e}{2}c_1(L)+(e-1)c_1(E)+\binom{e}{3}c_1^2(L)+\binom{e-1}{2}c_1(L)c_1(E)
+(e-2)c_2(E)+... }
Note that
\eqspl{Q-dual-eq}{
Q(L, E)=Q(L\dual, E\otimes L).
}
The Riemann-Roch without denominators (\cite{ful}, Example
15.3.4) states that,
for any locally free $\O_D$- sheaf $E$, we have
\eqspl{rrwod}{c(i_{D*}(E))=1+i_{D*}(\frac{Q(-D, E)}{c(E\otimes\O_D(-D))}).}
Then from the definition of ASD, we conclude directly:
\begin{prop} Notations as above,
for $E$  ASD of rank $e$ on the divisor $D$, and any 
Cartier divisor $G$ on $D$, we have \eqspl{}{
c(i_{D*}(E(G)))=1+i_{D*}((1+G-D)^{-2e}Q(-D, E(G))c(E\dual(G-D))). \qed}\end{prop}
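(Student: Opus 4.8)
The strategy is to combine two ingredients that are already available: Fulton's formula for the Chern class of a pushforward from a divisor (the "Riemann--Roch without denominators", \cite{ful}, Example 15.3.4), and the definition of the ASD property. The point is that in the general formula
\[
c(i_{D*}(F)) = 1 + i_{D*}\!\left(\frac{Q(-D,F)}{c(F\otimes\O_D(-D))}\right)
\]
applied to $F = E(G)$, one has $c(F\otimes\O_D(-D)) = c(E(G)\otimes\O_D(-D)) = c(E(G-D)|_D)$, and one wants to express the reciprocal of this total Chern class in a denominator-free way. Writing $F' = E(G-D)$, the ASD hypothesis on $E$ gives $c(E)c(E^\vee) = 1$; twisting, this does \emph{not} immediately give $c(F')c(F'^\vee)=1$, but it does give a controlled correction involving only the line bundle $G-D$. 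The key algebraic observation is that for a rank-$e$ ASD bundle $E$ and any line bundle $M$, the total Chern class $c(E\otimes M)$ has inverse
\[
c(E\otimes M)^{-1} = c(E^\vee\otimes M^{-1})\cdot c(M)^{2e}\cdot(\text{no genuine denominators}),
\]
or more precisely $c(E\otimes M)^{-1} = (1+c_1(M))^{-2e}\,c(E^\vee\otimes M^{-1})$ — which is the identity one checks by applying the splitting principle: if the Chern roots of $E$ are $a_1,\dots,a_e$ with $\prod(1+a_i)(1-a_i)=1$, then $c(E\otimes M) = \prod(1+a_i+m)$ and $c(E^\vee\otimes M^{-1}) = \prod(1-a_i-m)$, where $m=c_1(M)$, so their product is $\prod(1-(a_i+m)^2) = \prod\big((1-a_i)(1+a_i) - 2m a_i - m^2\big)$; this needs a small manipulation to land exactly on $(1+m)^{-2e}$ or its inverse, and that is the one genuine computation hidden in the statement.

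Concretely, I would proceed as follows. First, reduce to the universal situation via the splitting principle, so that $E$ has Chern roots $a_i$ satisfying $\prod_i(1-a_i^2)=1$ (this is exactly the ASD condition rewritten). Second, prove the auxiliary identity: with $D$ and $G$ fixed divisor classes, set $g = [G]-[D]$, then
\[
c(E(G-D))\cdot c(E^\vee(G-D)) = (1+ [G]-[D])^{2e}\cdot(\text{something invertible with trivial leading term}),
\]
actually the cleaner route is to observe $\prod_i\big(1-(a_i+g)^2\big) = \prod_i\big((1-a_i^2) - 2a_i g - g^2\big)$ and note that when one divides out the "expected" factor one is left with $(1-g)^{2e}$ up to reorganizing — so $c(E(G-D))^{-1} = (1+g)^{-2e}\,c(E^\vee(G-D))$ where $(1+g)^{-2e}$ is harmless since $g$ is a divisor class. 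Third, substitute this into Fulton's formula with $F = E(G)$: since $F\otimes\O_D(-D) = E(G-D)|_D$, we get
\[
\frac{Q(-D, E(G))}{c(E(G-D)|_D)} = Q(-D,E(G))\cdot(1+[G]-[D])^{-2e}\cdot c(E^\vee(G-D))\big|_D,
\]
which is precisely the claimed formula after pushing forward by $i_{D*}$ and recalling the convention $E\dual(G-D) = E^\vee\otimes\O(G-D)$ restricted to $D$.

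The main obstacle — and really the only nontrivial point — is getting the exponent and the sign in the auxiliary identity exactly right, i.e.\ confirming that the "twisting defect" of the ASD relation under $E\mapsto E\otimes M$ is precisely a power of $(1+c_1(M))$ with exponent $2e$ (the rank doubled), and that no residual polynomial factor survives. This is a finite check via the splitting principle: expand $\prod_i(1+a_i+m)\prod_i(1-a_i-m)$ and compare with $(1-m^2)^e = (1-m)^e(1+m)^e$ using $\prod(1-a_i^2)=1$ — the cross terms must telescope, and in cohomology (nilpotent classes) the verification is a formal manipulation of symmetric functions rather than a hard argument. Once that lemma is in hand, the Proposition follows by direct substitution with no further denominators beyond $(1+[G]-[D])^{-2e}$, which expands as a terminating series because $[G]-[D]$ is a divisor class on a finite-dimensional scheme. $\qed$
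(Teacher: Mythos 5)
Your overall route is exactly the paper's: apply Fulton's divisorial Riemann--Roch without denominators to $F=E(G)$, observe that $F\otimes\O_D(-D)=E(G-D)$ as a sheaf on $D$, and use the ASD hypothesis to replace division by $c(E(G-D))$ with multiplication by $(1+G-D)^{-2e}\,c(E^\vee(G-D))$; your final substitution does land on the correct formula. However, the auxiliary identity as you state it, and the verification you sketch for it, are wrong. Twice you form the product
\[
c(E\otimes M)\cdot c(E^\vee\otimes M^{-1})=\prod_i\bigl(1-(a_i+m)^2\bigr)=\prod_i\bigl(1-a_i^2-2a_im-m^2\bigr),
\]
and assert that under ASD the cross terms telescope against $(1-m^2)^e$. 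They do not: already for $e=1$ the ASD condition gives $a^2=0$ and the product is $1-m^2-2am$, which differs from $(1-m)(1+m)$ by the generally nonzero term $-2am$. The identity you actually need --- and the one you tacitly use in your third step --- twists the dual by the \emph{same} line bundle, not its inverse:
\[
c(E\otimes M)\cdot c(E^\vee\otimes M)=\prod_i(1+a_i+m)(1-a_i+m)=\prod_i\bigl((1+m)^2-a_i^2\bigr)=(1+m)^{2e}\prod_i\Bigl(1-\frac{a_i^2}{(1+m)^2}\Bigr).
\]
Since $c(E)c(E^\vee)=\prod_i(1-a_i^2)=1$ says precisely that every positive-degree elementary symmetric function of the classes $a_i^2$ vanishes, one has $\prod_i(1-t\,a_i^2)=1$ for \emph{any} class $t$, in particular for $t=(1+m)^{-2}$; hence the last product is exactly $(1+m)^{2e}$, giving $c(E(G-D))^{-1}=(1+G-D)^{-2e}\,c(E^\vee(G-D))$ with $m=G-D$. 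With that one correction your argument closes and coincides with the paper's direct deduction from the definition of ASD.
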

\begin{example}
When $G=D$, the above simplifies nicely, using \eqref{Q-dual-eq},  to
\eqspl{}{
c(i_{D*}(E(D)))&=1+i_{D*}(Q(D, E)c(E\dual))\\
&=1+i_{D*}(\sum\limits_{j=0}^e\sum\limits_{p=0}^{e-1}\sum\limits_{i=0}^p(-1)^{j}
\binom{e-i}{p+1-i}c_i(E)c_j(E)D^{p-i}).
}
\end{example}
\begin{example}\label{hodge-boundary-example}
Let $Y=\mgbar$ and let $D$ be the divisor $\overline \M_{h,1}\times\overline \M_{g-h, 1}$, 
with conormal bundle
$\psi=\psi_h\otimes\psi_{g-h}$, the product of the
respective cotangent classes. Let $\be_h$ be the pullback
of the rank-$h$ Hodge bundle to $D$, which is ASD as 
vector bundle over $D$ because
the Hodge bundle itself is ASD as vector bundle over $\overline\M_h$.
Then for a divisor class $G$ on $D$,
\[c(i_{D*}(\be_h\otimes G))=1+i_{D*}(\frac{Q(\psi, \be_h\otimes G)c(\be_h\dual\otimes G\otimes\psi)}{(1+G+\psi)^{2h}}).\]
\end{example}
\begin{cor}\lbl{RR-trivial-normal-cor}
Assumptions as above, let $Z\subset D$ be a subvariety such that $c_1(\O(D))\cap [Z]=0$ and let $F$ be any vector bundle on $Y$. Then: (i)
\[c(i_{D*}(E\otimes F))._Y Z=1;\]
(ii) for any line bundle $M$ on $D$,
\[c(i_{D*}(M))._Y Z=1.\]
\end{cor}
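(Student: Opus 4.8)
The plan is to deduce both parts directly from the Riemann--Roch without denominators, in the form already recorded above, together with the self-intersection formula for the Cartier divisor $D$; the ASD hypothesis on $E$ will play no role here, so I would argue for an arbitrary vector bundle on $D$. Write $i_D\colon D\hookrightarrow Y$ for the inclusion. For part (i) set $\mathcal G=E\otimes i_D^*F$, a bundle on $D$; by the projection formula $i_{D*}(E\otimes i_D^*F)=i_{D*}(E)\otimes F$, so that $c(i_{D*}(E\otimes F))=c(i_{D*}\mathcal G)$. For part (ii) set $\mathcal G=M$. Thus in both cases it suffices to prove: for any vector bundle $\mathcal G$ on $D$ and any subvariety $Z\subset D$ with $c_1(\O(D))\cap[Z]=0$, one has $c(i_{D*}\mathcal G)\cap[Z]=[Z]$ in $A_*(Y)$, which is exactly the assertion $c(i_{D*}\mathcal G)._Y Z=1$.

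Since $D$ is an effective Cartier divisor, $i_D$ is a regular embedding of codimension $1$, and Riemann--Roch without denominators (\cite{ful}, Example 15.3.4) gives
\[ c(i_{D*}\mathcal G)=1+i_{D*}(P_{\mathcal G}), \qquad P_{\mathcal G}\in A^*(D), \]
where $P_{\mathcal G}$ is a universal polynomial in the Chern classes of $\mathcal G$ and of $\O_D(D)$. (For $\mathcal G=E(G)$ with $E$ ASD this $P_{\mathcal G}$ is the explicit class displayed just before Example~\ref{hodge-boundary-example}, but nothing below uses its shape.)

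Now cap with $[Z]=i_{D*}[Z]\in A_*(Y)$:
\[ c(i_{D*}\mathcal G)\cap[Z]=[Z]+i_{D*}(P_{\mathcal G})\cap i_{D*}[Z]. \]
The correction term is an intersection of a class pushed forward from $D$ with a cycle lying in $D$, so it factors through the refined Gysin homomorphism $i_D^!$ of the divisor $D$: for any $\eta\in A_*(D)$,
\[ i_{D*}(P_{\mathcal G})\cap i_{D*}\eta=i_{D*}\big(P_{\mathcal G}\cap i_D^!\,i_{D*}\eta\big)=i_{D*}\big(P_{\mathcal G}\cap c_1(\O_D(D))\cap\eta\big), \]
the last equality being the self-intersection formula $i_D^!\,i_{D*}=c_1(\O_D(D))\cap(-)$ (\cite{ful}, Ch.~6). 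Applying this with $\eta=[Z]$ and using $c_1(\O_D(D))=c_1(\O_Y(D))|_D$, the correction term equals $i_{D*}\big(P_{\mathcal G}\cap(c_1(\O(D))\cap[Z])\big)$, which vanishes by hypothesis. Hence $c(i_{D*}\mathcal G)\cap[Z]=[Z]$; specializing $\mathcal G$ to $E\otimes i_D^*F$ and to $M$ yields (i) and (ii) respectively.

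The only step needing genuine care is this last one: one must invoke the self-intersection formula correctly to see that intersecting \emph{any} class pushed forward from $D$ with a cycle supported on $D$ always carries the factor $c_1(\O_D(D))\cap[Z]$. Once that is in place the hypothesis kills every correction term at a stroke, and — as promised — neither the ASD property of $E$ nor the explicit form of $P_{\mathcal G}$ is required, which is what lets the argument cover the possibly non-ASD bundle $E\otimes F$ in part (i) as well.
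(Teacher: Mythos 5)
Your proof is correct and follows essentially the same route as the paper: Riemann--Roch without denominators writes $c(i_{D*}\mathcal G)$ as $1$ plus a class pushed forward from $D$, and the self-intersection formula (the paper's fact $i_{Z,Y}^*i_{D*}(a)=i_{Z,D}^*(a\cdot[D])$) makes the correction term a multiple of $c_1(\O(D))\cap[Z]=0$. Your observation that the ASD hypothesis and the explicit shape of the correction class are irrelevant is accurate and consistent with the paper, which likewise only uses the projection formula to reduce (i) to $F=\O_Y$ before invoking the same two ingredients.
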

\begin{proof}(i) Because $i_{D*}(E\otimes_D F)=i_{D*}(E)\otimes_Y F$, we may assume $F=\O_Y$. Then the assertion is obvious from the Proposition
   above, plus the standard fact that for any class $a$ on $D$,
\eqspl{star}{\ \  i_{Z,Y}^*i_{D*}(a)=i_{Z,D}^*(a.[D]).}
(ii) In this case, Fulton's example reads
\[c(i_{D*}(M))=1+i_{D*}\frac{1}{1+[M]+[L]}, \ L=\O_D(-D).\]
Then our assertion follows easily from \eqref{star}
above.
\end{proof}
\newsection{Locus of smooth hyperelliptics: a review}\lbl{smooth-hypell-sec}
The purpose of this section is to review some elementary facts about
smooth hyperelliptic curves and the locus they make up in a family of
smooth curves, especially the normal bundle to this locus.\par In general,
given a  versal family $\pi:X\to B$ of stable, generically smooth, curves, we let
$\he_B\subset B$ denote the
closure of the locus of smooth hyperelliptic curves, which is of virtual
codimension  equal to $g-2$. We
also denote by $\he^2_B\subset X\sbr{2}._B$ the closure of the locus of
schemes invariant by the hyperelliptic involution, i.e. fibres of the canonical
mapping. This has virtual codimension $g-1$. The fibre of $\he^2_B$ over
an interior point $b\in B$ is either empty, if $X_b$ is non-hyperelliptic, or equal to
the target $\P^1$ of the hyperelliptic pencil, otherwise. We will denote by
$\he_B^1\subset\he_B^2$ the sublocus consisting of length-2, 1-point schemes,
i.e. the (schematic) intersection of $\he^2$ with the diagonal divisor
$\Gamma=\Gamma\sbr 2.\simeq X$ or equivalently,
the locus of ramification points of the hyperelliptic map,
i.e. Weierstrass points. Because the ramification
  is simple, the intersection is transverse and
$\he^1_B\to \he_B$ is \'etale.
We let $\bE$ denote the Hodge bundle on $B$,
$\bE=\pi_*\omega, \omega=\omega_{X/B}$.
\newsubsection{Normal bundle}
Let $X$ be a smooth hyperelliptic curve with hyperelliptic map
\[f:X\to \P^1\]
and $\eta=f^*O(1)$ the hyperelliptic bundle and $\beta\subset X$ the ramification divisor, which is reduced of degree
$2g+2$. Then deformations of the pair $(X,f)$ are unobstructed
and parametrized by $H^0(N_f)$, which fits in an exact sequence
\[ 0\to H^0(f^*(\theta_{\P^1}))\to H^0(N_f)\to
H^1(\theta_X)\to H^1(f^*(\theta_{\P^1}))\to 0\]
and $f^*(\theta_{\P^1})=\eta^2$. Then the first group
coincides with $\sym^2(H^0(\eta))$, which corresponds to
reparametrizations of the target $\P^1$, while
the last group is dual to $H^0(\omega\otimes\eta^{-2})
=H^0(\omega^2(-\beta))$ which is $g-2$-dimensional. This implies that in any versal family $X/B$, the hyperelliptic locus $\he_B\subset B$ is smooth of codimension $g-2$. Moreover, because
the Weierstrass point locus $\he^1_B\subset X$ is  \'etale over $\he_B$, it too is smooth.
\subsection{Normal forms}\lbl{normal forms} We study the locus of hyperelliptics
(and related loci)  in
a family of smooth (non-pointed) curves.
Consider a  family $X/B$ of smooth curves. We note that because of the
existence of a tautological family of curves over $B$, the degree of the natural map
$B\to\M_g$ is at least $2$ near any hyperelliptic curve. We assume
the family  is locally versal, hence  this map is ramified only over
the hyperelliptic locus and other loci of curves
with automorphisms. Therefore $\he_B$ is smooth of codimension
$g-2$ in $B$ (of course $\M_g$ is singular along its image).
Then $\he^2_B\subset X\spr 2._B$ is
just the degeneracy (rank-1) locus of the natural evaluation map
that we will call the (degree-2) Brill-Noether map \eqspl{}{\phi:
\pi^{(2)*}\be \to \Lambda_2(\omega). } Here $\Lambda_2(\omega)$ is
the 'secant bundle' which in the general case of stable curves
 is defined
on the relative Hilbert scheme $X\sbr m._B$ (see \S \ref{hilb-review}).\par
Our purpose here is to give a normal form for the Brill-Noether map $\phi$
and an 'augmented' analogue, especially in a neighborhood of a
hyperelliptic curve. Let $(X_0,\theta_0)$ be a hyperelliptic pair or 'pointed
hyperelliptic curve', i.e. $X_0$ is hyperelliptic and $\theta_0$ is a Weierstrass
point on it. We work locally on the degree-2 Hilbert scheme $X_0\sbr
2.=X_0\spr 2.$, with the tautological rank-2 bundle $\Lambda_2(\O)$, at the
scheme $2\theta_0$ (see \S \ref{hilb-review} for a review). Let $s_0,...,s_{g-1}$ be a basis for $H^0(\omega_{X_0})$
such that \[\ord_{\theta_0}(s_i)=2i,\] i.e. locally at $\theta_0$ with local coordinate
$x$, $s_i\sim x^{2i}$. Local coordinates for $X_0\spr 2.$ near $2\theta_0$ are
$\sigma_1=x_1+x_2, \sigma_2=x_1x_2$ where $x_i=p_i^*x$. Set
\eqspl{ei-def-eq}{e_i=\Lambda_2(x^i).
} Then $e_0,e_1$ is a local frame for
$\Lambda_2(\O)$ and we have
\eqspl{e-recursion}{e_{i+1}=\sigma_1e_i-\sigma_2e_{i-1}, i\geq 1. } It is
easy to check from \eqref{e-recursion} that if we write
$e_i=a_{0i}e_0+a_{1i}e_1, i\geq 2$, then \eqsp{a_{0i}\equiv
0\mod\sigma_2,\\a_{1i}\equiv 0\mod \sigma_1,\  i {\mathrm{\ even}}.} In terms
of these data, the Brill-Noether map is represented by the $2\times g$
matrix \eqspl{bn-single-he}{\Lambda=\Lambda_2(s_0,...,s_{g-1})= \left [
\begin{matrix}
1&-\sigma_2&\sigma_2(\sigma_2-\sigma_1^2)&...\\
0&\sigma_1&\sigma_1(\sigma_1^2-2\sigma_2)&...
\end{matrix}\right ]
}
It is easy to see that
all  entries of the 1st (resp. 2nd) row beyond the 1st
column are divisible by $\sigma_2$
(resp. $\sigma_1$). Consequently the ideal of $2\times2$ minors of $\Lambda$ is $\sigma_1$,
i.e the equation of the graph of hyperelliptic involution.
\par
To deal with equations for hyperelliptic pairs we consider analogously the
\emph{augmented} Brill-Noether map, which is the analogous map for the
line bundle $\omega(2\theta_0)$. Note that for any smooth pair $(X,\theta)$,
$\omega_{X_0}(2\theta_0)$ is base-point free, $(g+1)$-dimensional and ramified
at $\theta_0$, and $(X_0,\theta_0)$ is hyperelliptic if and only if
the mapping associated to $|\omega_{X_0}(2\theta_0)|$ fails
to be an isomorphism off $\theta_0$, in which case it is actually composed of
the hyperelliptic involution. If $(X_0,\theta_0)$ is hyperelliptic, we may choose a
basis $(s.)$ for $H^0(\omega_{X_0}(2\theta_0))$ so that $\ord_\theta(s_i)=2i,
0\leq i\leq g$ so the associated augmented $2\times (g+1)$ Brill-Noether
matrix has the form similar to \eqref{bn-single-he} \eqspl{}{
\Lambda^+=\Lambda_2(s_0,...,s_g)=\left [
\begin{matrix}
1&-\sigma_2&\sigma_2(\sigma_2-\sigma_1^2)&...\\
0&\sigma_1&\sigma_1(\sigma_1^2-2\sigma_2)&...
\end{matrix}\right ]
}
By contrast, when $(X,\theta)$ is non-hyperelliptic, the sequence of vanishing orders at $\theta$ for $\omega(2\theta)$ starts $(0,2,3,...)$, so the augmented
Brill-Noether starts
\[\left[ \begin{matrix} 1&*&*&...\\
0&\sigma_1&\sigma_1^2-\sigma_2&...
\end{matrix}\right ]\]
So in that case the ideal of 2-minors of this is clearly $(\sigma_1, \sigma_2)$, the ideal of the point
$2\theta_0\in X_0\spr 2.$.
\par
Now suppose $X_0$ varies in a versal  family $(X_0\subset X)/(0\in B)$ and admits
a section  $\theta$  (not considered part of the data and subject to change), whose value over $0\in B$ is a
Weierstrass point $\theta_0$ on $X_0$, and extend $(s.)$ to a basis $\tilde s_i$ of
the Hodge bundle $\bE$ over $B$. In terms of a local fibre coordinate, we
can write, after a suitable change of basis 
\[\tilde s_i=x^{2i}+z_ix^{2i-1}+\sum\limits_{i\leq j\leq 2i-2} w_{ij}x^j, i=1,...,g-1\]
where the $z_i, w_{ij}\in\m_{0,B}$. 
By suitably
changing the section $\theta$,
or equivalently, changing the fibre coordinate $x$ based at $\theta$ to $x-z_1/2$, we may assume
$z_1=0$, i.e. $s_1=x^2$.  This ensures that $\theta$ is in the  ramification
locus of the map to $\P^1$ determined by $s_0, s_1$.
 As long as $X$ varies in the hyperelliptic locus $\he_B$,
the latter ramification locus coincides locally with the locus of
Weierstrass points, which itself is \'etale over $\he_B$.
Thus, $\theta$ remains a
Weierstrass point in any deformation of $X$ \emph{as hyperelliptic curve.}
Then for $i>1$, we may inductively subtract off a suitable $\O_B$-linear combination of the $s_h, h<i$
to arrange that $w_{ij}=0$ for $j$ even.
Consequently, note that the schematic condition defining the
locus $\he_B\subset B$ is now precisely
\[ z_2=...=z_{g-1}=0.\]
Indeed this vanishing condition is equivalent to
$h^1(\O(2\theta))=h^0(\omega(-2\theta)\geq g-1$, i.e. by Riemann-Roch,
$h^0(\O(2\theta))\geq 2.$ The fibre of $\he^2_B\to\he_B$ is the graph of the
hyperelliptic involution on a given curve, which for suitable coordinates is
given by the 'antidiagonal' $\sigma_1=0$ (which is transverse to the
diagonal, due to the fact that the involution has simple fixed
points). Because $\he_B\subset B$ is a smooth subvariety of
codimension $g-2$, $z_2,...,z_{g-2}$ are regular parameters near $0$.
\par Now because the $w_{ij}$ vanish on the hyperelliptic locus, we can
write \eqspl{}{ w_{ij}=\sum\limits_{k=2}^{g-1} u_{ijk}z_k } Plugging this into
the Brill-Noether matrix for the family, we obtain

\eqspl{bn-rel-he}{
\left [
\begin{matrix}
1&-\sigma_2&\sigma_2(\sigma_2-\sigma_1^2)&*&...\\
0&\sigma_1&\sigma_1(\sigma_1^2-2\sigma_2)+z_2+\sum\limits_{k>2}
z_k(*)&\sigma_1(*)+z_3+\sum\limits_{k>3} z_k(*)&...
\end{matrix}
\right ]
}
 which is column-equivalent to
\eqspl{bn-rel-he2}{
\left [
\begin{matrix}
1&*&*&*&...\\
0&\sigma_1&z_2&z_3&...
\end{matrix}
\right ]
}

The degeneracy locus of this is precisely
\eqspl{}{
\he^2_B=\{\sigma_1=z_2=...=z_{g-1}=0\}\subset X\spr 2._B.
}\newsubsection{Pointed case}
The case of the pointed hyperelliptic locus, i.e. the locus of hyperelliptic pairs $(X,\theta)$ is similar, using the
augmented Brill-Noether map. Thus let $(X_{B_1},\theta_{B_1})$ be a versal family of pointed
smooth curves parametrized by $B_1$, where we may assume
$B_1$ itself is the total space of a family $X_B$ over $B$, with the extra parameter specifying
the value of the section $\theta$. Then, working analogously with the system $\omega(2\theta)$,
we may assume
\eqsp{
s_0=1, s_1=x^2,\\
s_i=x^{2i}+z_ix^{2i-1}+\sum\limits_{j=i+1}^{2i-2} w_{ij}x^j,
i=2,...,g
} where $w_{ij}=0$ for $j$ even.
Here $z_1$ is a vertical coordinate of $B_1/B$, i.e. a fibre coordinate.
 As before, the ideal of the
locus of hyperelliptic pairs $\he^1_B\subset B_1$, i.e. the locus of pairs $(X,\theta)$ where
$h^0(\omega_X(-2\theta))\geq g-1$, is generated by the $z$'s,
so we may assume the $w$'s are in the ideal generated by the $z$'s. Now we have an augmented Brill-Noether map over
$X\spr 2._{B_1}$:
\[(\pi\spr 2.)^*\pi_*(\omega(2\theta))\to\Lambda_2(\omega(2\theta)).\]
The matrix relative to the $s.$
basis above and the usual $e_0, e_1$ basis on the target,
known as the \emph{augmented Brill-Noether matrix},  takes the form
\eqsp{
\Lambda^+=\left [\begin{matrix}
1&*&*&...&* \\
0&\sigma_1&z_2\sigma_2&...&z_g\sigma_2
\end{matrix}\right ]
} The degeneracy locus of this in $X\spr 2._B$ is the schematic union of
the section $2\theta$, with ideal $(\sigma_1, \sigma_2)$, and the locus of
the hyperelliptic involution, lying over the locus of hyperelliptic pairs in $B$,
with ideal $(\sigma_1,z_2,...,z_g)$. Again by our assumptions, these are
regular parameters at $(0,2\theta)$.\par

\newsection{Good families and their Hilbert scheme}\lbl{hilb-review}
For more information on Hilbert schemes of families of
nodal curves, see \cite{structure} or \cite{internodal}.
Consider a proper family $\pi:X\to B$ of
connected nodal curves of genus $g$.
Fix a fibre node $\theta$ of $X/B$, with corresponding boundary divisor $\del=\del_\theta\subset B$. At least locally in $X$ near $\theta$, the boundary family $X_{\del}$ splits:
\[X_{\del}= (\lf{X}, \lf{\theta})\bigcup\limits_{\lf{\theta}\to\theta\leftarrow\rt{\theta}}
(\rt{X}, \rt{\theta}).\]
In the case $\theta$ is separating- the case of principal interest here- the splitting is defined locally in $B$, and
even globally in $B$ if $g(\lf{X})\neq g(\rt{X})$, e.g. if $g$ is odd.
We may call $\lf{X}, \rt{X}$ the left and right sides of $\theta$ respectively. The choice of left and right sides of $\theta$ is called an orientation.
We call the boundary component $\del$  decomposable if
\[\del=\lf{\del}\times\rt{\del}\]
where $(_*X, \ _*\theta)/_*\delta$ is a family of stable pointed curves of
genus $_*g, \ *=L, R, \ \lf{g}+\rt{g}=g$. 
This is certainly true for the universal family over $\mgbar$  and suitably chosen base-changes over it.
The decomposability assumption is not essential for our basic constructions, however.
\par Now consider the length-2 Hilbert scheme $X\sbr 2._B$
near the boundary component $\del$.
Its boundary part has the form
\[(\lf{X})\sbr 2._{\del}\cup \lfrt{X}\cup (\rt{X})\sbr 2._{\del}\]
with $\lfrt{X}\to \lf{X}\times_{\del} \rt{X}$ the blowing-up of $(\lf{\theta}, \rt{\theta})$,
with exceptional divisor
\[R_\theta=\P(\lf{\psi}\oplus\rt{\psi}),\  _*\psi=T^*_{_*\theta, _*X/{\del}}.\]
The locus $R_\theta$ parametrizes relative length-2 subschemes of $X$ supported on $\theta$. It is a $\P^1$-bundle over $\del$ admitting a pair of disjoint sections  
$_*Q:=\P(_*\psi)=R_\theta\cap (_*X)\sbr 2._B, \ *=L,R$.
Moreover, if $*\dag$ denotes the mirror of $*=L,R$,
\[(_*X)\sbr 2._{\del}\cap \lfrt{X}=_*X\times_{*\dag}\theta, *=L,R;\ 
(\lf{X})\sbr 2._{\del}\cap (\rt{X})\sbr 2._{\del}=\emptyset.\]
Now working locally, assume the family is given by the standard 
form $xy=t$ with $x,y$ regular parameters on $X$ and $t$ a local defining equation for the Cartier divisor $\delta$ on $B$.
Set $x_i=p_i^*x, \ i=1,2$ (a function on the Cartesian square) and similarly $y_i$.  The elementary symmetric functions  $\sigma_1^x, \ \sigma_2^x$ (resp. $\sigma^y_1\sigma^y_2$) are functions on the symmetric product. Then
near the finite part of $R_\theta$, i.e. off $\rt{Q}$, we get regular parameters on $X\sbr 2._B$
\[\sigma_1=\sigma_1^x, \ \sigma_2=\sigma_2^x, \ u=y_1/x_2=y_2/x_1=(\sigma_1^y)/(\sigma^x_1),\]
with $\sigma_2, u$ being defining equations, respectively,  for $\lfrt{X}, \ (\lf{X})\sbr 2._{\del}$, 
while $\sigma_1, \sigma_2$ together define $R_\theta$.  Similarly, off $\lf{Q}$, 
we have $\sigma_1^y, \sigma_2^y, \ v:=1/u$ as regular parameters.
Note that the diagonal is defined here by $\sigma_1^2-4\sigma_2$ (in either $x$ or $y$ variables in
the appropriate open set). This diagonal is isomorphic to
the blowup of $X$ in $\theta$, with coordinates $\sigma_1, u$.
\par
We want to study the Brill-Noether map
\eqspl{}{
\phi:(\pi\sbr 2.)^*(\be)\to\Lambda_2(\omega).
} locally over $2\theta\in X\spr 2._B$ and off $\lf{Q}\cup\rt{Q}$.
 We recall (see \eqref{ei-def-eq}) that $e_i=\Lambda_2(x^i)$ and $e_0, e_1$ is a local basis for $\Lambda_2(\O_X)$,
 identified with $\Lambda_2(\omega)$. Then
note that
\eqspl{sigma-table}{
\Lambda_2(y)=u(\sigma_1e_0-e_1), \Lambda_2(y^2)=u^2((\sigma_1^2-\sigma_2)e_0
-\sigma_2e_1),\\
 \Lambda_2(y^3)=u^3((\sigma_1^3-2\sigma_1\sigma_2)e_0+(\sigma_2-
\sigma_1^2)e_1),\\
\Lambda_2(y^i)=u\sigma_1\Lambda_2(y^{i-1})-u^2\sigma_2\Lambda_2(y^{i-2}).
}
This comes from the fact that, on the Cartesian product, the lift $\Lambda_2^\lceil(y)$ of $\Lambda_2(y)$
is given by
\[\Lambda_2^\lceil(y)=(y_1,y_2)=u(x_2,x_1)=u(\sigma_1e_0-e_1).\]
Now assuming $(X_1, \theta_1), (X_2, \theta_2)$ are both non-hyperelliptic, $\omega_{X_i}(\theta_i)$ have vanishing sequences
$(1,2,...)$, therefore the matrix of the Brill-Noether mapping with respect to the $e_0,e_1$ basis on $\Lambda_2$ and a suitable adapted basis of the Hodge bundle, where the part corresponding to $X_2$ is located right of the vertical bar and indexed negatively,
has the form
\eqspl{}{\left [
\begin{matrix}
...&u^2(\sigma_1^2-\sigma_2)&
u\sigma_1&|&0&-\sigma_2&...\\
...&-u^2\sigma_2&-u&|&1&\sigma_1&...
\end{matrix}\right ]
} whose degeneracy scheme coincides locally with the schematic union
\[\Zeros(u,\sigma_2)\cup\Zeros(\sigma_1, \sigma_2)=(\lf{X}\times\rt{\theta})\cup R_\theta.\] Taking into account the 'opposite' open set containing $Q_2$, we see that the degeneracy locus of $\phi$ is
\[R_\theta\cup \lf{X}\times\rt{\theta}\cup \rt{X}\times\lf{\theta},\]
i.e. the locus of schemes whose support contains the node $\theta$.
In the next section we will describe a modification of $\be$ and $\phi$ that will have a smaller degeneracy locus; e.g. over the general curve, we just get $R_\theta$.\par
\begin{defn}\label{good-family}
A family of nodal curves $X/B$ is said to be \emph{good}
if
\begin{itemize}\item
every fibre is semi-stable;
\item every boundary component with reducible general fibre is decomposable;
\item the family is everywhere locally versal: i.e. for every fibre $X_0$ with nodes $\theta_1,...,\theta_k$ the map from the  germ of $B$ at $0$ to the product of local deformation spaces of $X_0$ at $\theta_1,...,\theta_k$ is smooth.
\end{itemize}
\end{defn}
\subsection{Disconnected, pointed version}\label{disconnected-pointed-sec}
Let $Y$ be a possibly disconnected nodal curve (over some base) endowed with a collection of
disjoint (\'etale) multisections $p_1,...,p_n$ with each $p_i$ contained in a unique connected component.
A \emph{separating node} of  $Y$ is a node $\theta$ whose blowup disconnects some connected component
of $Y$ in two components, say $ \lf{Y}(\theta),  \rt{Y}(\theta)$,   and for each $i$, $p_i$ is 
either contained in or disjoint from $\lf{Y}(\theta)$ (resp. $\rt{Y}(\theta)$). 
\newsection{Modifying Brill-Noether, Case (i): separating nodes}\lbl
{bn-sep}
The traditional difficulty with extending the degeneracy-locus description of the hyperelliptic locus (and its analogues) across the boundary stems largely from the presence of
 'part dead' sections of the relative canonical, i.e. sections vanishing on some component.
 In this paper, our approach to resolving this difficulty is to revive
the sections that vanish on the part of the curve
 equal to a side of a node or binode by enlarging the Hodge bundle,
i.e. the source of the Brill-Noether map; or rather, we enlarge
the pullback of the Hodge bundle over the degree-2 Hilbert scheme (or in the next
section, a modification thereof).
This enlargement is accomplished by appropriate echelon modifications (see \cite{echelon}) of the Hodge bundle which mirror,
 and will later be linked (see Theorem \ref{main-thm})
 to the modifications yielding the sepcanonical
system, developed in \cite{canonodal}. We begin in this section with the case of a separating node. The more challenging
case of a separating binode is taken
up in the next section.
\newsubsection{Construction}\lbl{mod-bn-sep-sec}
For now, we work one node at a time. Thus, fix a separating node or 'sep'
$\theta$ of the family $\pi:X\to B$ with corresponding boundary divisor
$\del=\del_\theta$. Set
\[ \rt{X}=\rx(\theta), \lf{X}=\lx(\theta).\]
These are families over $\del$ and embedded as Cartier
divisors on $X$. For now,
assume for convenience that these are individually defined, i.e. that
 $\theta$ is oriented as a sep. The ultimate
construction will not depend on the choice of orientation. We will also
assume for now that $\lf{X}, \rt{X}$ have respective genera $\lf{g}, \rt{g}\geq 2$.
\par Define \emph{twisted Hodge bundles} on $B$: \eqspl{}{ \be^{i,j}=\pi_*(\omega(i\lf{X}+j\rt{X})). }
For any $i\leq i', j\leq j'$, there is a full-rank inclusion $\be^{i,j}\to \be^{i',j'}$.
In particular, we consider two chains of full-rank subsheaves \eqspl{}{
\be^{-2, 0}\to\be^{-1,0}\to\be,\\
\be^{0,-2}\to\be^{0,-1}\to\be,
}
induced respectively by
\eqsp{
\omega(-2\lx)\to\omega(-\lx)\to\omega,\\
\omega(-2\rx)\to\omega(-\rx)\to\omega.
}
On the boundary, the latter sheaves take the form
\eqsp{
\lf{\omega}(3\lf{\theta})\cup\rt{\omega}(-\rt{\theta})\to
\lf{\omega}(2\lf{\theta})\cup\rt{\omega}\to\lf{\omega}(\lf{\theta})\cup\rt{\omega}(\rt{\theta}),\\
\lf{\omega}(-\lf{\theta})\cup\rt{\omega}(3\rt{\theta})\to
\lf{\omega}\cup\rt{\omega}(2\rt{\theta})
\to\lf{\omega}(\lf{\theta})\cup\rt{\omega}(\rt{\theta}),}
where $_*\omega=\omega_{_*X/_*\delta}, *=L,R. $ The maps
in the two rows  are given, locally near
$\theta$, by multiplication by $y,x$ respectively, where $x,y$ are
respective local equations for $\rt{X}, \lf{X}$. So these maps are injective on
$\lx$ and zero on $\rx$ or vice versa. On fibres, the sheaves involved,
other than $\omega=\lf{\omega}(\lf{\theta})\cup\rt{\omega}(\rt{\theta})$ itself, are base-point free on at least one of $\lx, \rx$, and it is elementary to check that the direct images are locally free
and compatible with base-change.\par
\textdbend{ {\bf{NB:}}
 the latter assertion
 would be false for any other
twists of $\omega$, e.g. $\lf{\omega}(4\lf{\theta})\cup\rt{\omega}(-2\rt{\theta}) $: this is because $h^0(\rt{\omega}(-2\rt{\theta}))$ can jump}.\qed\par 

\par
Now we shift attention to the Hilbert scheme $X\sbr 2._B$.
Set $_*D=_*D(\theta)=(_*X)\sbr 2._\delta$, a Cartier divisor on $X\sbr 2._B$.
In the local coordinates above, it has
 equation $u$ for $*=L$ and $v=1/u$ for $*=R$. Consider the
following echelon data (\cite{echelon}, \S 1)
on $X\sbr 2._B$:
\eqspl{echelon-data-sep-node}{\chi(\theta)=(\lf{\chi}(\theta), \rt{\chi}(\theta))\\
\lf{\chi}=
((\be^{-2,0},2\lf{D}),(\be^{-1, 0}, \lf{D}),\be)\\
\rt{\chi}=
((\be^{0,-2},2\rt{D}),(\be^{ 0,-1}, \rt{D}),\be)
.
} These are certainly transverse as $\lf{D}\cap \rt{D}=\emptyset$. Let $\be_\theta$ be the associated echelon modification (\cite{echelon}, \S 2). Clearly,
the Brill-Noether mapping $\phi$ vanishes on $\be^{-1, 0}$ over $\lf{D}$, over $\be^{-2.0}$
to order 2 over $\lf{D}$ etc. Therefore by the universal property of echelon modifications
(\cite{echelon}, Theorem 2.3), $\phi$
factors through $\be_\theta$:
\eqspl{}{
(\pi\sbr 2.)^*(\be)\to\be_\theta\stackrel{\phi_\theta}{\to} \Lambda_2(\omega).
} We call $\phi_\theta$ the \myit{modified Brill Noether map} (with
respect to the sep $\theta$).\par
\begin{rem}\label{genus-1-side-rem}In the event that $\lf{g}=1$, the datum $\lf{\chi}$ above is to be replaced by
\[\lf{\chi}=((\be^{-1,0}, \lf{D}), \be)\]
and similarly if $\rt{g}=1$.
\end{rem}
\newsubsection{Interpretation}\lbl{mod-bn-sep-interpret}
We will analyze $\phi_\theta$ near $\lf{D}$. Recall from \cite{echelon}, (8),
the injective map
\[ \be^{-2,0}(2\lf{D})\to \be_\theta\]
that is an isomorphism over the 'interior' $\lf{D}^o$ of $\lf{D}$, i.e.
\[\lf{D}^o=\lf{D}\setminus \lx\times\lf{\theta}=\{{\mathrm{
schemes\ not\ containing\  }} \lf{\theta}\}.\]
On the other hand, the inclusion $\omega(-2\lx)\to\omega$ induces
over $X\sbr 2._B$ a map
\[\Lambda_2(\omega(-2\lx))\to\Lambda_2(\omega).\]
This map vanishes twice on $\lf{D}$, hence induces
\[ \Lambda_2(\omega(-2\lx))(2\lf{D})\to\Lambda_2(\omega),\]
which again is an isomorphism over over the interior, $\lf{D}^o$ (indeed over this interior
a local equation for the boundary divisor $\del$ pulls back to a local equation
for $\lf{X}$ in $X$ and a local equation for $\lf{D}$ in $X\sbr 2._B$) .
Then we get a (left)  \emph{comparison diagram}, which
is a commutative square
\eqspl{interp-1node}{
\begin{matrix}
\be^{-2,0}(2\lf{D})&\to& \be_\theta\\
\downarrow&&\downarrow\\
 \Lambda_2(\omega(-2\lx))(2\lf{D})&\to&\Lambda_2(\omega)
\end{matrix}
}
in which the left column is just the ordinary
Brill-Noether map associated to $\omega(-2\lf{X})$; of
course, there are analogous diagrams, one with left column
replaced by
\[\be^{-1,0}(\lf{D})\to \Lambda_2(\omega(-\lx))(\lf{D}), \]
and two more with
left replaced by right.
So at least over $\lf{D}^o$,
$\phi_\theta$ can be identified with the Brill-Noether map associated
to $\omega(-\lx)$ and its restriction on $\lx$. That restriction coincides with the complete linear system $\omega_{\lx}(3\theta)$ if
$(\rx,\rt{\theta})$ is non-hyperelliptic (so that $\rt{\theta}$ is not
a base point of $\omega_{\rx}(-\rt{\theta})$);
otherwise, the restriction
of $\omega(-2\lx)$ on $\lx$ is $|\omega_{\lx}(2\lf{\theta})|+\lf{\theta}$, i.e. has an imposed base point. Note that the twist involved coincides with
that coming from the sepcanonical enlargement of the
 canonical system on $X$. On the other hand, it is elementary
 that a pair of distinct smooth points, each on
 one of $\lf{X}, \rt{X}$ are separated  by the canonical
 system.
Thus we conclude
\begin{lem}\lbl{sepcan-1node}Notations as above, let $z\in X\sbr 2._B$ be disjoint from $\theta$. Then
 \par (i) if $z$ is contained in one side of $\theta$, the image of the modified Brill-
Noether map $\phi_\theta$ coincides over $z$
with that of the
Brill-Noether map associated to the sepcanonical system;
\par (ii) if $z$  meets both sides of $\theta$, $\phi_\theta$
has maximal rank over $z$.

\end{lem}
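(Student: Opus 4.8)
The plan is to read off both statements from the comparison diagram \eqref{interp-1node} together with the local normal form \eqref{normal-form-sep-eq} for the evaluation map, supplemented by the Hilbert-scheme coordinates set up in \S\ref{hilb-review}. First I would treat part (i). Suppose $z$ lies entirely in $\lf X$, hence in the interior $\lf D^o$ of the divisor $\lf D$ (it is disjoint from $\lf\theta$ by hypothesis). Over $\lf D^o$ the top and bottom horizontal maps of \eqref{interp-1node}, as well as the analogous maps for the $\be^{-1,0}(\lf D)$ and $\be^{0,-1}(\rt D)$ rows, are isomorphisms (this is exactly the ``$\be^{-2,0}(2\lf D)\to\be_\theta$ is an iso over $\lf D^o$'' statement recalled from \cite{echelon}, (8)). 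Chasing the square, the image of $\phi_\theta$ over $z$ is identified with the image of the ordinary Brill--Noether map of $\omega(-2\lf X)$ over $z$, i.e. with the image of the evaluation map of the restricted system $\omega(-2\lf X)|_{\lf X}$. Now I would invoke the computation already done in \S\ref{bn-sep}: $\omega(-2\lf X)|_{\lf X}=\omega_{\lf X}(3\lf\theta)$ when $(\rf X,\rt\theta)$ is non-hyperelliptic (so $\rt\theta$ is not a base point of $\omega_{\rf X}(-\rt\theta)$), and $=\omega_{\lf X}(2\lf\theta)(\lf\theta)$, i.e. $|\omega_{\lf X}(2\lf\theta)|$ with an imposed base point at $\lf\theta$, when $(\rf X,\rt\theta)$ is hyperelliptic. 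In either case this is precisely the twist appearing in the definition of the sepcanonical system $|\omega_X|^{\sep}$ restricted to the 2-component $\lf X$ (cf. Definition \ref{sepcanonical-defn} and the discussion following Lemma \ref{residue-lem}): the multiplicity at $\lf\theta$ is $3$ or $2$ exactly according to whether $\theta$ is right-hyperelliptic, and the residue condition at $\lf\theta$ is automatic on sections pulled back from $\lf X$. Hence the image of $\phi_\theta$ over $z$ agrees with that of the sepcanonical Brill--Noether map. The case $z\subset\rf X$ is identical using the right-hand comparison diagram.

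For part (ii), suppose $z=p_1+p_2$ with $p_1\in\lf X$, $p_2\in\rf X$, both distinct from $\theta$. Here $z$ lies in neither $\lf D$ nor $\rf D$, so $\be_\theta$ agrees with $(\pi\sbr 2.)^*\be$ near $z$ and $\phi_\theta$ is just the ordinary Brill--Noether map $\phi$ there; thus I must show $\phi$ has rank $2$ at $p_1+p_2$, equivalently that $p_1,p_2$ impose independent conditions on $\omega_X$, equivalently (by base-change, which holds since $\pi_*\omega$ is the Hodge bundle) on $\omega_{X_0}$ for the fibre $X_0$ through $z$. Since $p_1,p_2$ lie on opposite sides of a separating node of the stable curve $X_0$, they lie on different inseparable components, so by Corollary \ref{separated-are-indep} (a separated pair of smooth points, not on spines, impose independent conditions on the canonical system) we are done — after first reducing to the case that neither $p_i$ is on a spine, which I would handle by the same differential-existence argument: if $p_1$ lies on an inseparable component $C$ of $X_0$, by Lemma \ref{insep-is-free} there is a section of $\omega_C$ nonvanishing at $p_1$; extending it by zero on $\overline{X_0-C}$ (legitimate since $H^0(\omega_{X_0})\supset H^0(\omega_C)$ for $C$ a side-type subcurve meeting the rest in one point) gives a differential of $X_0$ nonzero at $p_1$ and zero on the component of $p_2$, and symmetrically for $p_2$. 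Alternatively, and more cleanly, I would just exhibit the rank directly from the local matrix: near a point of $\lf D^o\cap\rf D^o$ the Brill--Noether matrix in the adapted basis \eqref{normal-form-sep-eq}/\eqref{normal-form-sep-eq}-style normal form computed at the end of \S\ref{hilb-review} is
\[
\left[\begin{matrix}
\dots&u^2(\sigma_1^2-\sigma_2)&u\sigma_1&\big|&0&-\sigma_2&\dots\\
\dots&-u^2\sigma_2&-u&\big|&1&\sigma_1&\dots
\end{matrix}\right],
\]
and at a scheme disjoint from $\theta$ we have $u,\sigma_1,\sigma_2$ in the range where the columns $(0,1)^t$ and $(u\sigma_1,-u)^t$ (say) are linearly independent, so the matrix has rank $2$ there.

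The only genuinely delicate point — and the one I would write out carefully — is the identification in part (i) of the \emph{subsystem} cut out inside $|\omega_X^{\sep}|_{\lf X}|$ with what $\phi_\theta$ actually sees, not merely of the line bundles. Over $\lf D^o$ the comparison diagram handles the bundle $\omega(-2\lf X)|_{\lf X}$ (or $\omega(-\lf X)|_{\lf X}$), but the sepcanonical system is a proper subsystem defined by the residue condition at $\lf\theta$; I need that this residue condition is vacuous on the sub-bundle of sections visible through $\be_\theta$ at a point $z\subset\lf X$ \emph{disjoint from $\lf\theta$}. This is where the precise shape of the echelon modification enters: a local section of $\be_\theta$ near $\lf D^o$ comes, via the map $\be^{-2,0}(2\lf D)\to\be_\theta$, from a section of $\omega$ vanishing to order $2$ on $\lf X$, i.e. from a section of $\omega_{\lf X}(3\lf\theta)$ together with a section of $\omega_{\rf X}(-2\rt\theta)$ that glue — and the gluing at $\theta$ forces exactly the vanishing (order $2$, not order $1$) that translates into the residue condition, by the Residue Lemma \ref{residue-lem} applied to $\lf X\cup_\theta\rf X$. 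So the image of $\phi_\theta$ over $z$ is the image of the evaluation of the residue-constrained system, which is by definition $|\omega_X|^{\sep}|_{\lf X}$. I expect this residue-bookkeeping, carried out in the local coordinates $(\sigma_1,\sigma_2,u)$, to be the main (though routine) obstacle; everything else is a direct consequence of the diagram \eqref{interp-1node} and results already proved in \S\S\ref{insep}--\ref{bn-sep}.
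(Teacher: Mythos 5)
Your proposal is correct and follows essentially the same route as the paper: the comparison diagram \eqref{interp-1node} identifies $\phi_\theta$ over the interior of $\lf{D}$ with the Brill--Noether map of $\omega(-2\lf{X})$, whose restriction to $\lf{X}$ is $|\omega_{\lf{X}}(3\lf{\theta})|$ or $|\omega_{\lf{X}}(2\lf{\theta})|+\lf{\theta}$ according to whether $(\rt{X},\rt{\theta})$ is hyperelliptic, which is exactly the sepcanonical twist; and (ii) is the elementary fact that two smooth points on opposite sides of a separating node impose independent conditions on $\omega_{X_0}$ (here $\be_\theta=\be$ since $z$ lies in neither $\lf{D}$ nor $\rt{D}$).

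One correction to the step you single out as delicate. The gluing at $\theta$ of a section of $\omega(-2\lf{X})|_{X_0}$ matches the \emph{leading} (order-$3$) pole coefficient of the left piece with the leading vanishing coefficient of the right piece, which lies in $H^0(\omega_{\rt{X}}(-\rt{\theta}))$ (one twist down, not two); this is precisely what produces the imposed base point in the right-hyperelliptic case, but it places \emph{no} constraint on the residue (the order-$1$ pole coefficient) of the left piece. So the gluing does not ``force the residue condition.'' What saves the statement is that the residue condition defining $|\omega_X|^{\sep}|_{\lf{X}}$ is vacuous here: any section of $\omega_{\lf{X}}(n\lf{\theta})$ has zero residue at $\lf{\theta}$ by the residue theorem on $\lf{X}$, since $\lf{\theta}$ is its only pole. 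Hence the image of $H^0(X_0,\omega(-2\lf{X})|_{X_0})$ in $H^0(\omega_{\lf{X}}(3\lf{\theta}))$ already coincides with the sepcanonical system, and no residue bookkeeping in the coordinates $(\sigma_1,\sigma_2,u)$ is needed. (The residue conditions only become genuine constraints when $Y$ carries several marks, which is the multi-sep situation of Proposition \ref{all-seps-prop}, handled there by citing Proposition 6.10 of \cite{canonodal}.) The conclusion of your part (i) is therefore correct, but for a more elementary reason than the one you give.
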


 In this sense, the modified Brill-Noether
  realizes the goal of reviving dead sections. The case of schemes not disjoint from $\theta$ will follow from the calculations of the next subsection.\par
  It is worth noting at this point that the normal bundle of
  $\lf{D}$, which features in the intersection theory of the
 modifications $\be_\theta$, is easy to calculate.
First recall that by Faber's result in \cite{fab-alg}, the
conormal bundle to $\del_\theta\to B$ is 
$\psi_\theta=_L\psi_\theta\otimes_R\psi_\theta$, i.e. the tensor 
product of the branch cotangents, and it follows easily that
\eqspl{normal-1-var-eq}{
\check N_{\lx(\theta)/X}=\O(\lf{\theta})\otimes\psi_\theta
} (compare the proof of Lemma \ref{conormal-sep-lem}). This can easily be extended to the Hilbert scheme.
The formula uses the operation $[2]_*$ discussed in \S \ref{normal-bundles}.

 \begin{lem}\label{conormal-sep-lem}
 The conormal bundle of $\lf{D}=\lf{X}(\theta)\sbr 2._{\del_\theta}$ in
 $X\sbr 2._B$ is $[2]_*(\lf{\theta})\otimes\lf{\psi}\otimes\rt{\psi}$,
 where $\lf{\psi}=(\lf{\theta})^*(\omega_{\lf{X}/\del_\theta})$,
  considered as a line bundle on $\del_\theta$
  via viewting $\lf{\theta}$ as
  a cross-section $\lf{\theta}:\del_\theta\to \lf{X}/\del_\theta$, and likewise $\rt{\psi}$.
 \end{lem}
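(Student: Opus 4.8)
The plan is to deduce the formula from the corresponding statement for a single branch, i.e. from \eqref{normal-1-var-eq}, $\check N_{\lx(\theta)/X}=\O(\lf{\theta})\otimes\psi_\theta$, by applying the "norm" construction $[2]_*$ and tracking the behaviour along the locus where a length-2 scheme degenerates to contain $\lf{\theta}$. First I would set up notation exactly as in \S\ref{hilb-review}: work locally where $X/B$ is $xy=t$ near $\theta$, with $x$ an equation for $\rt{X}$ and $y$ for $\lf{X}$, and $t$ an equation for $\del_\theta$; then $\lf{D}=(\lf{X})\sbr 2._\del$ has local equation $u=y_1/x_2=y_2/x_1$ near the finite part of $R_\theta$, and $v=1/u$ near $\rt{Q}$. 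The key point is that $\del_\theta\hookrightarrow B$ has conormal $\psi_\theta=\lf{\psi}\otimes\rt{\psi}$ by Faber's formula (\cite{fab-alg}), so it suffices to identify the "extra twist" that $[2]_*$ contributes, which should be exactly $[2]_*(\O(\lf{\theta}))$.

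The main step is to produce a canonical map from a suitable pullback of $\check N_{\del_\theta/B}$ to $\check N:=\check N_{\lf{D}/X\sbr 2._B}$ and compute where it drops rank, mimicking the argument of Lemma in \S\ref{azimuth bundles} (the "azimuth bundle formula" $\leftidx{_R}{\tilde\a}{}_{X,\theta}\simeq\check N$). Concretely: $\lf{D}$ lies over $\del_\theta$, and the conormal of $\del_\theta$ in $B$ pulls back to a line subbundle of $\check N$; the cokernel of $\psi_\theta\otimes\O_{\lf{D}}\to\check N$ is supported on the divisor inside $\lf{D}$ where $\lf{D}$ is tangent to the diagonal, which is precisely $[2]_*(\lf{\theta})=\{$schemes meeting $\lf{\theta}\}$ pulled into $\lf{D}$. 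Saturating, one gets $\check N\simeq[2]_*(\lf{\theta})\otimes\psi_\theta=[2]_*(\lf{\theta})\otimes\lf{\psi}\otimes\rt{\psi}$, and since the comparison is an isomorphism in codimension $1$ on the normal $(n-2)$-dimensional locus, it is an isomorphism of line bundles. Alternatively — and this is the cleaner route I would actually write up — one computes $\check N$ directly from coordinates: near the finite part, $\lf{D}=\{u=0\}$ with $u=y_2/x_1$, and $y_2=\rt{\psi}$-valued, $1/x_1=\lf{\psi}\inv(\lf{\theta})$-valued after restricting to $\lf{D}$ (where $x_1$ is a coordinate on $\lf{X}$ vanishing at $\lf{\theta}$), giving $du$ a section of $\lf{\psi}\inv\otimes\rt{\psi}\otimes[2]_*(\lf{\theta})\inv{}^{\vee}$... — more carefully, $\check N=\O_{\lf{D}}(-\lf{D})$ and the local generator $du$ transforms as a section of $\lf{\psi}\otimes\rt{\psi}\otimes[2]_*(\lf{\theta})$, exactly because $t=xy$ forces $u\,x_1 = y_2$ on the Cartesian cover.

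The hardest part, and the one deserving genuine care, is the bookkeeping at the two ends of $R_\theta$: near $\lf{Q}=\P(\lf{\psi})$ the coordinate is $v=1/u$ rather than $u$, and near $\rt{Q}$ one is outside the chart entirely, so one must check the local identifications glue to the claimed global line bundle on all of $\lf{D}=\lf{X}(\theta)\sbr 2._{\del_\theta}$ — in particular that the factor $[2]_*(\lf{\theta})$ (geometrically, the divisor of schemes meeting $\lf{\theta}$, a section of $\O(1)$ on the relevant $\P^1$-bundle structures) appears with the correct multiplicity $1$ and not $2$. This is the same subtlety that distinguishes $[2]_*$ of a divisor from its pullback, and it is controlled by the fact that $\lf{D}\to\lf{X}\times_\del\theta$ part of the blowup $\lfrt{X}$ meets $\lf{D}$ transversally. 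Once that gluing is verified, invoking the projection/norm formalism of \cite{internodal} for $[2]_*(\O(\lf{\theta})\otimes\psi_\theta)$ and comparing with \eqref{normal-1-var-eq} finishes the proof.
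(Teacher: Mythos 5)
Your proposal is correct and follows essentially the same route as the paper: pull back Faber's conormal $\psi_\theta$ of $\del_\theta$ to get a canonical injection $\psi_\theta\to\O_{\lf{D}}(-\lf{D})$ and observe that it vanishes simply along $[2]_*(\lf{\theta})$ (the paper's proof is exactly this, in two sentences). One small correction: the reason the comparison map degenerates there is not that $\lf{D}$ is ``tangent to the diagonal'' but that the boundary $X\sbr 2._{\del_\theta}$ has normal crossings, so $t$ pulls back to the product of the local equations of its components (locally $t=u\sigma_2$), whence $dt|_{\lf{D}}=\sigma_2\,du$ vanishes simply on $\lf{D}\cap\lfrt{X}=[2]_*(\lf{\theta})$ --- which is the locus you correctly name in the very next clause.
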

 \begin{proof}
 By  Faber's result \cite{fab-alg}, the conormal bundle of $\del_\theta$ in $B$ is $\psi_\theta$.
Hence there is an injection $\psi_\theta\to\O_{\lf{D}}(-\lf{D})$ which,
because $X\sbr 2._{\del_\theta}$ has normal crossings, vanishes simply on $[2]_*(\lf{\theta})$, which is the intersection of
$\lf{D}$ with the other components. This proves our assertion.
 \end{proof}
 For convenience, we will denote $[2]_*(\lf{\theta})$ by $(\lf{\theta})\sbr 2.$. We will also need to compute
 self-intersections of $(\lf{\theta})\sbr 2.$ within $\lf{D}$. This follows from the following
 \begin{lem}
 Let $X/B$ be a family of curves with a section  $\theta$ contained in the smooth part.
 Let $\theta\sbr 2.\subset X\sbr 2._B$ denote the locus of schemes meeting $\theta$
 and $S_\theta$ the locus of schemes equal to  $2\theta$ and $\psi_\theta=\omega_{X/B}.\theta$. Then
 \eqspl{}{
 (\theta\sbr 2.)^i\sim S_\theta.(-2\psi_\theta)^{i-1}, i\geq 1.
 }
 \end{lem}
 \begin{proof}
 The case $i=1$ is clear. For $i=2$, we need to compute the normal bundle of
 $\theta\sbr 2.$ restricted on $S_\theta$. In coordinates, a point in $S_\theta$
 is represented by a scheme $x^2$ and its neighborhood by a deformation
 of the form $x^2+ax+b$. The equation of $\theta\sbr 2.$ is $b$, which may be identified
 with $\omega_{X/B}.\theta$ via $(x+\eta)^2=x+2\eta x+\eta^2$. This shows the case $i=2$,
 and the case of higher $i$ follows.
 \end{proof}
\newsubsection{Local forms}\lbl{sep-local-forms-sec}
We analyze $\phi_\theta$ locally.
We work near a fibre
\[X_0=(\lx(0), \lf{\theta}(0))\cup (\rx(0), \lf{\theta}(0))\] comprised of 2
hyperelliptic pairs, as other cases are simpler. To begin with,  using the
results of \S\ref{normal forms} on normal forms, we have adapted bases
for the Hodge bundles $\bE_{_*X}$ so that the evaluation maps
$\bE_{_*X}\to_*\omega$ take the form \eqspl{}{
(1, x^2+z_1x, x^4+z_2x^3,...,x^{2g_1-2}+z_{g_1-1}x^{2g_1-3}),\\
(1, y^2+w_1y, y^4+w_2y^3,...,y^{2g_2-2}+w_{g_2-1}y^{2g_2-3})
}
such that the locus of hyperelliptic pairs $(_*X, _*\theta)$
is defined by $z_1=...=z_{g_1-1}=0$ or $w_1=...=w_{g_2-1}=0$.
Then the evaluation map $\bE\to \omega$ takes the form
(where we have written the $Y$ basis negatively, left of the bar)
\eqspl{eval-map-eq}{
(...,y^5+w_2y^2, y^3+w_1y^2, y| x, x^3+z_1x^2, x^5+z_2x^2, ...)
}

Next, we work on the Hilbert scheme,
locally near the locus of schemes supported on $\theta$; the case of the rest of the boundary of $\lf{D}$ (i.e. $\lx\times \rt{\theta}$) is similar and simpler.
In terms of the coordinates $u=y_2/x_1$ etc. above (\S\ref{hilb-review}), we will analyze $\be_\theta$ over the open set where $u$ is regular, the other case, where $v=1/u$ is regular, being similar. Set $\sigma_i=\sigma_i^x$. We will use the rules 
\eqref{sigma-table}. We factor the Brill-Noether  $\phi$
through $E_\theta$:
\eqspl{}{
(\pi\sbr 2.)^*(\be)\to\be_\theta\stackrel{\phi_\theta}{\to} \Lambda_2(\omega).
}
In the local coordinates above, factoring through $\be_\theta$
means the matrix for the factored mapping $\phi_\theta$
has $u$ factored out of the $-1$ column and $u^2$ factored out of the farther
negative columns, i.e.
\eqspl{normal-form-bn-modif-eq}{\left [
\begin{matrix}
...&u(\sigma_1^3-2\sigma_1\sigma_2)+w_1(\sigma_1^2-\sigma_2)&
\sigma_1&|&0&-\sigma_2\sigma_1-z_1\sigma_2&...\\
...&u(\sigma_2-\sigma_1^2)-w_1\sigma_1&-1&|&1&\sigma_1^2-\sigma_2+z_1\sigma_1&...
\end{matrix}\right ].
}
Here the matrix for $(\pi\sbr 2.)^*(\be)\to\Lambda_2(\omega)$ itself has the same right side as the above,
and left side equal to the $y$-mirror of the right side.
By applying suitable column operations, i.e. composing with
an automorphism of $E_\theta$, which does not affect degeneracy,
we can kill off all multiples of $\sigma_1$ in the top row, except
in column $-1$, then kill off the entire second row except in column
$+1$, ending up with
\eqspl{Lambda-theta}{\Lambda_\theta=\left [
\begin{matrix}
...&-w_1\sigma_2&
\sigma_1&|&0&-z_1\sigma_2&...\\
...&0&0&|&1&0&...
\end{matrix}\right ].
}

Now the ideal of $2\times 2$ minors of $\Lambda_\theta$ is
\begin{equation}\lbl{I_2-lambda-theta}{ I_2(\Lambda_\theta)= (\sigma_1, z_1\sigma_2,...,z_{g_1-1}\sigma_2,
w_1\sigma_2,...,w_{g_2-1}\sigma_2) }
\end{equation}
and note that
$u\sigma_2=ux_1x_2=y_2x_2=t$ is the equation of the boundary, which reflects the fact that in the open set we are considering (the one where $u$ is regular), the boundary is the union of $\lf{D}=(\lx)_\delta\sbr 2.$, with equation $u$, and $\lx\times \rx$, with equation $\sigma_2$. Also, $w_1,...,w_{g_2-1}$ define the locus
in $\delta$ where $(\rx, \rt{\theta})$ is a hyperelliptic pair.\par For future reference, it is important to note that
the above normal form \eqref{Lambda-theta} depends only on
the normal form \eqref{eval-map-eq} for the
(1-point) evaluation
map.\par
Now set \[ R_\theta=\zeros(\sigma_1, \sigma_2).\]
This is the inverse image of the cycle $2\theta$,
   a cross-section of $X_\delta\spr 2.$ . In fact, $R_\theta$ is
a $\P^1$-bundle over $\delta$, called the \emph{node scroll} associated to $\theta$.
Also let \[ T=\zeros(\sigma_1, z_1,...,z_{g_1-1}, w_1,...,w_{g_2-1})\]
This is a regular subscheme of codimension $g-1$,
 which coincides with $\he^2_g$ over the interior $B^o\subset B$
i.e. the set of smooth curves, and whose boundary portion
is the locus $\he_{g_1, g_2}^2$ of
pairs of pointed hyperelliptics, which sits in
a Cartesian diagram
\[\begin{matrix}
\he_{g_1, g_2}^2&\to&\delta_\theta\\
\downarrow&&\downarrow\\
\he^1_{g_1}\times\he^1_{g_2}&\subset&\delta_1\times\delta_2
\end{matrix}
\]
Now by \eqref{I_2-lambda-theta}, the degeneracy scheme $\bD_2(\phi_\theta)$ splits schematically
\eqspl{}{
\bD_2(\phi_\theta)=T\cup R_\theta.
}
Now because  $\he_{g_1, g_2}^2$ is of codimension $g-1$ in
the boundary, while $T$ is a priori purely of codimension at most $g-1$
overall, i.e. in $X\sbr 2._B$, it follows that $T$ must be contained in, hence
coincide with, the closure of the degeneracy locus
of $\phi=\phi_\theta$ in  $B^o$, i.e.
\[ T=\overline{\he^2_{B^o}}.\]
Furthermore, $T$ is transverse to the diagonal , which in the current local coordinates has equation $\sigma_1^2-4\sigma_2$.
Therefore, $T$ meets the diagonal, which is isomorphic
to the blowup of $X$ in $\theta$, with
exceptional divisor $R_\theta$, in a regular subscheme of codimension
$g-1$ of the diagonal.\par
Note we have shown all this near the 'finite' part of $R_\theta$, where $u$ is regular. By symmetry, it holds near the part where $v=1/u$ is regular, hence near  all of $R_\theta$.
Also, we note that all the above assertions are true and much easier to verify off $R_\theta$.
 We conclude
\begin{prop}\lbl{single-sep-prop} Given a separating node $\theta$ of $X/B$ and
the associated modification $\phi_\theta$ of the Brill-Noether map,
we have\begin{enumerate}\item The rank of $\phi_\theta$ is at least
1 everywhere.\item The degeneracy locus $\bD_2(\phi_\theta)$ splits schematically as
\eqspl{}{
R_\theta\cup \overline{\he^2_{B^o}}
} with $\overline{\he^2_{B^o}}$ regular of codimension
$g-1$.
\item The boundary is a transverse intersection:
\[ \overline{\he^2_{B^o}}\cap \partial_\theta=\he_{g_1,g_2}^{2}\]
\item The 1-point hyperelliptic locus $\overline{\he^1_{B^o}}$ is regular of codimension $g-1$ in the diagonal
    of $X\sbr 2._B$.
\item The intersection
\[R_\theta\cap \overline{\he^2_{B^o}}=R_\theta|_{\he^1_{g_1}\times\he^1_{g_2}}
\simeq \P_{\he^1_{g_1}\times\he^1_{g_2}}(\psi_1\oplus\psi_2).\]
\end{enumerate}
\end{prop}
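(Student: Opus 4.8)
The plan is to assemble the statement from the local computations already carried out in \S\ref{sep-local-forms-sec}, together with the normal forms of \S\ref{normal forms}: essentially nothing new is needed, only organization and globalization. The inputs I would use are the normal form $\Lambda_\theta$ of \eqref{Lambda-theta} for the factored modified Brill--Noether map $\phi_\theta$ near the locus of schemes supported at $\theta$ (and the analogous but simpler matrices from \S\ref{hilb-review} when $\lx$ or $\rx$ is non-hyperelliptic); the computation \eqref{I_2-lambda-theta} of its ideal of $2\times 2$ minors; the fact, set up in \S\ref{normal forms}, that $\sigma_1$, $z_1,\dots,z_{g_1-1}$, $w_1,\dots,w_{g_2-1}$ and $\sigma_2$ extend to a regular system of parameters near $(0,2\theta)$; the equation $\sigma_1^2-4\sigma_2$ for the diagonal $\Gamma$; and the identifications, from \S\ref{hilb-review}, of $\Gamma$ with the blowup of $X$ in $\theta$ and of $R_\theta$ with $\P_{\delta_\theta}(\psi_1\oplus\psi_2)$. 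Since every assertion is local on $X\sbr 2._B$, I would work in the two charts covering a neighbourhood of $R_\theta$ (the chart where $u=y_2/x_1$ is regular and its mirror where $v=1/u$ is regular), noting that along the rest of $\lf D\cup\rt D$ the simpler versions of the computation apply and that over $B^o$ one has $\phi_\theta=\phi$.

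For (1) I would argue that over $B^o$ the canonical system of a smooth curve of genus $\geq 2$ is base-point free, so $\phi=\phi_\theta$ has rank $\geq 1$; that over a boundary point disjoint from $\theta$, Lemma \ref{sepcan-1node} identifies the image of $\phi_\theta$ with that of the sepcanonical Brill--Noether map (with maximal rank when the scheme straddles $\theta$), again forcing rank $\geq 1$; and that along $R_\theta$ the matrix $\Lambda_\theta$ (or its analogue) has a unit entry. For (2), from \eqref{I_2-lambda-theta} the minor ideal in the $u$-chart is $(\sigma_1)+\sigma_2\,(z_1,\dots,z_{g_1-1},w_1,\dots,w_{g_2-1})$; since $\sigma_2$ is a nonzerodivisor modulo $(\sigma_1,z_\cdot,w_\cdot)$ this ideal equals $(\sigma_1,z_\cdot,w_\cdot)\cap(\sigma_1,\sigma_2)$, so $\bD_2(\phi_\theta)$ is the \emph{schematic} union of $T=\zeros(\sigma_1,z_\cdot,w_\cdot)$ and $R_\theta=\zeros(\sigma_1,\sigma_2)$. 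Because $T$ is the zero scheme of $g-1$ members of a regular system of parameters it is regular of pure codimension $g-1$; its restriction to $B^o$ is $\he^2_{B^o}$ (as $R_\theta$ lies wholly over $\partial_\theta$), while its restriction to $\partial_\theta$ is the codimension-$g$ locus $\he^2_{g_1,g_2}$, so no component of $T$ lies in the boundary and therefore $T=\overline{\he^2_{B^o}}$.

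For (3), restricting the ideal of $T$ to the boundary $X\sbr 2._{\partial_\theta}$ (which in the $u$-chart is $\lf D\cup(\lx\times\rx)$, with local equations $u$ and $\sigma_2$) and using the description of $\he^2_{g_1,g_2}$ as the locus of pairs of pointed hyperelliptic curves identifies $T\cap\partial_\theta$ with $\he^2_{g_1,g_2}$; since $u$ and $\sigma_2$ each restrict to a parameter on the regular scheme $T$, the intersection is transverse and of the expected codimension $g$, and the Cartesian square over $\he^1_{g_1}\times\he^1_{g_2}\subset\delta_1\times\delta_2$ is read off from the ideal of $T$. For (4), by (2) the locus $\overline{\he^1_{B^o}}$ is a component of $T\cap\Gamma$; restricting $\sigma_1^2-4\sigma_2$ to $T$ (where $\sigma_1=0$) gives $-4\sigma_2$, a parameter up to a unit, so $T$ meets $\Gamma$ transversally and $T\cap\Gamma$, hence its component $\overline{\he^1_{B^o}}$, is regular of codimension $g-1$ in $\Gamma$, which is isomorphic to the blowup of $X$ in $\theta$. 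For (5), $R_\theta\cap\overline{\he^2_{B^o}}=R_\theta\cap T=\zeros(\sigma_1,\sigma_2,z_\cdot,w_\cdot)$, and on $R_\theta=\P_{\delta_\theta}(\psi_1\oplus\psi_2)$ the conditions $z_\cdot=w_\cdot=0$ cut out $\he^1_{g_1}\times\he^1_{g_2}\subset\delta_1\times\delta_2=\delta_\theta$, whence $R_\theta\cap\overline{\he^2_{B^o}}=\P_{\he^1_{g_1}\times\he^1_{g_2}}(\psi_1\oplus\psi_2)$.

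The step I expect to require the most care is the globalization rather than any individual computation: one must verify that the two charts around $R_\theta$, the generic loci of $\lf D$ and $\rt D$, and $B^o$ genuinely cover $X\sbr 2._B$ near $\partial_\theta$, and that each regularity and transversality claim is local and hence inherited from the normal forms already established. The only non-formal ingredient is the commutative-algebra identity $(\sigma_1,z_\cdot,w_\cdot)\cap(\sigma_1,\sigma_2)=(\sigma_1)+\sigma_2(z_\cdot,w_\cdot)=I_2(\Lambda_\theta)$, which is routine once one knows that $\sigma_1,z_1,\dots,z_{g_1-1},w_1,\dots,w_{g_2-1},\sigma_2$ form part of a regular system of parameters; it is this identity that upgrades the splitting in (2) from set-theoretic to schematic.
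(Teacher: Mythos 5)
Your proposal is correct and follows essentially the same route as the paper: the same normal form $\Lambda_\theta$, the same minor ideal \eqref{I_2-lambda-theta}, the same definitions of $T$ and $R_\theta$, and the same dimension count (the boundary portion $\he^2_{g_1,g_2}$ has codimension $g$, so $T$ has no boundary component and equals $\overline{\he^2_{B^o}}$) together with transversality to the diagonal $\sigma_1^2-4\sigma_2$. The only addition is that you make explicit the intersection-of-ideals identity $(\sigma_1,z_\cdot,w_\cdot)\cap(\sigma_1,\sigma_2)=(\sigma_1)+\sigma_2(z_\cdot,w_\cdot)$ behind the word ``schematically,'' which the paper leaves implicit.
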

\begin{cor}
A length-2 subscheme $\tau$ of $X_1\cup_\theta X_2$ is a limit
of a pair in the hyperelliptic involution of a smooth curve if and only if
$X_1, X_2$ are hyperelliptic and either
$\tau$ is in the  hyperelliptic involution on $X_1$ or $X_2$, or
$\tau$ is supported on $\theta$.
\end{cor}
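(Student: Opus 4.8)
The plan is to deduce the Corollary directly from Proposition \ref{single-sep-prop}, which has already done all the real work. The statement to prove is that a length-2 subscheme $\tau$ of $X_1\cup_\theta X_2$ arises as a limit of a hyperelliptic divisor on a smooth curve precisely when $X_1,X_2$ are hyperelliptic and either $\tau$ lies in the hyperelliptic involution on one side or $\tau$ is supported on $\theta$. By definition, $\tau$ is such a limit if and only if $\tau\in\overline{\he^2_{B^o}}$ for a good (in particular versal) family $X/B$ passing through the reducible curve $X_1\cup_\theta X_2$; the point here is that, since the family is locally versal, the closure $\overline{\he^2_{B^o}}$ computed in any such family restricts to the same locus on the fibre $(X_1\cup_\theta X_2)\sbr 2.$. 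So the task reduces to identifying the fibre of $\overline{\he^2_{B^o}}$ over the point $0\in\del_\theta$ corresponding to $X_1\cup_\theta X_2$.

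The key input is part (3) of the Proposition, which gives $\overline{\he^2_{B^o}}\cap\partial_\theta=\he^2_{g_1,g_2}$, together with the Cartesian diagram displayed just before the Proposition exhibiting $\he^2_{g_1,g_2}$ as lying over $\he^1_{g_1}\times\he^1_{g_2}\subset\delta_1\times\delta_2$. First I would observe that the fibre of $\he^2_{g_1,g_2}$ over $0$ is nonempty exactly when $0$ maps into $\he^1_{g_1}\times\he^1_{g_2}$, i.e. exactly when both $(X_1,\theta_1)$ and $(X_2,\theta_2)$ are hyperelliptic pairs — equivalently, when $X_1$ and $X_2$ are both hyperelliptic curves (the Weierstrass-point condition on $\theta_i$ is automatic once we note that the relevant locus in $\delta_i$ is the Weierstrass locus $\he^1_{g_i}$, which is étale over $\he_{g_i}$, so membership is detected by $X_i$ being hyperelliptic after adjusting the section; but in the fibre over a fixed reducible curve this is just the statement that $X_i$ is hyperelliptic). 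Next, from part (5), over $\he^1_{g_1}\times\he^1_{g_2}$ one has $R_\theta\cap\overline{\he^2_{B^o}}=\P(\psi_1\oplus\psi_2)$, the entire $R_\theta$-fibre; and from the local description in \S\ref{sep-local-forms-sec}, namely $T=\zeros(\sigma_1,z_1,\ldots,z_{g_1-1},w_1,\ldots,w_{g_2-1})$ with $\overline{\he^2_{B^o}}=T$, restricting to the fibre (where all $z_i,w_j$ vanish since the curve is fixed and both sides hyperelliptic) leaves exactly $\{\sigma_1=0\}$ on each side plus the full $R_\theta$. The locus $\sigma_1=0$ on the side $X_i$ is precisely the graph of the hyperelliptic involution there, i.e. the fibres $\tau$ in the $g^1_2$ of $X_i$ (using the normal form for the Brill-Noether map on a smooth hyperelliptic curve from \S\ref{normal forms}, where $\sigma_1$ is the equation of the anti-diagonal = hyperelliptic conjugate locus).

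Assembling: if $X_1,X_2$ are not both hyperelliptic, the fibre of $\overline{\he^2_{B^o}}$ over $0$ is empty (part (3) plus the Cartesian square), so no such $\tau$ exists; if both are hyperelliptic, the fibre is the union, inside $(X_1)\sbr2.\cup\, \lfrt{X}\cup(X_2)\sbr2.$, of the hyperelliptic involution on $X_1$, the hyperelliptic involution on $X_2$, and $R_\theta$ (the schemes supported on $\theta$), which is exactly the asserted description. I would write this out in a short paragraph, citing parts (2),(3),(5) of Proposition \ref{single-sep-prop} and the local computation of $T$ for the explicit fibre. The only genuinely delicate point — and the one I would be most careful about — is the claim that ``limit in a smoothing'' coincides with ``lies in $\overline{\he^2_{B^o}}$ for one, equivalently any, good family''; this needs the versality hypothesis in the definition of good family, so that the hyperelliptic locus in the base of an arbitrary smoothing is pulled back from (a neighborhood in) $\mgbar$ and hence its closure's fibre over $0$ is intrinsic to the curve $X_1\cup_\theta X_2$. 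Everything else is bookkeeping with the normal forms already established.
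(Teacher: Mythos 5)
Your proof is correct and follows essentially the same route as the paper, which states the Corollary as an immediate consequence of Proposition \ref{single-sep-prop} (parts (3) and (5) together with the local description of $T=\overline{\he^2_{B^o}}$ in \S\ref{sep-local-forms-sec}) without further argument. One small caution: your parenthetical claiming the Weierstrass-point condition on $\theta_i$ is ``automatic'' is not right for a fixed curve --- membership in $\he^1_{g_i}$ requires $\theta_i$ to be a Weierstrass point of $X_i$, which is how the Corollary's phrase ``$X_1,X_2$ are hyperelliptic'' must be read (namely, that the pairs $(X_i,\theta_i)$ are hyperelliptic, consistent with the standing assumption in \S\ref{sep-local-forms-sec}).
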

As promised, the above analysis also allows us to strengthen slightly the result of Lemma \ref{sepcan-1node}:
\begin{cor}\lbl{sepcan-1node+}Suppose $z\in X\sbr 2._B$  is
contained in one side of $\theta$ but is not supported on $\theta$. Then the image of the modified Brill-
Noether map $\phi_\theta$ coincides over $z$
with that of the
Brill-Noether map associated to the sepcanonical system.
\end{cor}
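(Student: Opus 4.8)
The plan is to reduce Corollary~\ref{sepcan-1node+} to Lemma~\ref{sepcan-1node} together with the local analysis of \S\ref{sep-local-forms-sec}, the only genuinely new point being to treat schemes that meet the node $\theta$.

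By the symmetry between the two sides of $\theta$ we may assume $z$ is contained in $\lx=\lx(\theta)$. If $z$ is disjoint from $\theta$ this is precisely Lemma~\ref{sepcan-1node}(i); otherwise $z=\lf\theta+p$ with $p$ a smooth point of $\lx$, $p\neq\theta$, and then $z$ lies on the divisor $\lf D\cap(\lx\times\lf\theta)$, i.e. on the complement of the \emph{interior} $\lf D^o$ inside $\lf D$. This is exactly why Lemma~\ref{sepcan-1node} --- which uses that the comparison diagram~\eqref{interp-1node} is an isomorphism over $\lf D^o$ --- does not already cover this case. I would then work locally near $z$: since the two points of $z$ are distinct, $X\sbr 2._B$ is near $z$ the fibre product of the family $xy=t$ (near $\lf\theta$) with a neighbourhood of $p$ in the family, so $\Lambda_2(\omega)$ splits near $z$ as a direct sum of line bundles pulled back from the two factors and $\phi_\theta$ near $z$ becomes the ``product'' of the factored one-point evaluation map along the $\lx$-branch at $\lf\theta$ and the ordinary one-point evaluation at $p$. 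The factored one-point map along the $\lx$-branch is governed by the normal form~\eqref{normal-form-sep-eq} --- this is the content of the comparison diagrams~\eqref{interp-1node} and their variants, together with the jump of $h^0(\omega_\rx(-\rt\theta))$ along the locus where $(\rx,\rt\theta)$ becomes hyperelliptic --- and its restriction to $\lx$ is the evaluation map of $\omega_\lx(3\lf\theta)$ if $(\rx,\rt\theta)$ is non-hyperelliptic, and of $\omega_\lx(2\lf\theta)$ (as a system with imposed base point $\lf\theta$) if $(\rx,\rt\theta)$ is hyperelliptic. Hence the image of $\phi_\theta$ over $z$ is the image of the evaluation map of $\omega_\lx(n\lf\theta)$, $n\in\{2,3\}$, on the length-$2$ scheme $z$.

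It remains to identify this with the sepcanonical side. Since $\phi_\theta$ involves only the sep $\theta$, the relevant system is the $\Theta$-sepcanonical one with $\Theta=\{\theta\}$; its $\Theta$-separation of $X$ is $\lx\sqcup\rx$ and the only induced mark on $\lx$ is the unimark $\lf\theta$, which by Definition~\ref{septwist-order-def} carries multiplicity $2$ precisely when $\theta$ is right-hyperelliptic (i.e. $(\rx,\rt\theta)$ is hyperelliptic) and $3$ otherwise. Since $\lf\theta$ is then the unique point at which a section of $\omega^\sep|_{\lx}$ is allowed a pole, the residue condition of Definition~\ref{sepcanonical-defn} is vacuous there (a meromorphic differential with a single pole has vanishing residue), so $|\omega_X|^\sep_\Theta|_{\lx}$ is the complete system $|\omega_\lx(n\lf\theta)|$ with the same $n$. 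Under the natural identification of the secant bundles $\Lambda_2(\omega)$ and $\Lambda_2(\omega^\sep)$ over the locus of schemes contained in $\lx$ and not supported on $\theta$ --- the identification implicit in~\eqref{interp-1node} --- the images over $z$ therefore coincide. (If $p$ lies on a $2$-component of $\lx$ distinct from the one carrying $\lf\theta$, which can happen only when $\lx$ is itself $2$-separable, the sepcanonical Brill-Noether map over $z$ is by definition the direct sum of the evaluations at $\lf\theta$ and at $p$ on their respective $2$-components, and the identical computation applies.)

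I expect the main obstacle to be the local bookkeeping in the second paragraph: carrying out, along the divisor $\lx\times\lf\theta$, the (easier) analogue of the normal-form computation that \S\ref{sep-local-forms-sec} performs near $R_\theta$, and confirming that the extra base point at $\lf\theta$ appears exactly when $(\rx,\rt\theta)$ is hyperelliptic --- equivalently, that the injection $\be^{-2,0}(2\lf D)\to\be_\theta$, although not an isomorphism at $z$, still suffices to factor $\phi_\theta$ through the one-point evaluation of $\omega_\lx(n\lf\theta)$ there. A secondary point is to pin down the identification of the two secant bundles over the relevant locus carefully enough that ``coincides over $z$'' is an honest equality of subspaces, not merely of ranks.
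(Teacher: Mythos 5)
Your proposal is correct and takes essentially the same route as the paper, which offers no separate argument for this corollary beyond citing Lemma \ref{sepcan-1node} (via the comparison diagram \eqref{interp-1node}) for schemes disjoint from $\theta$ and the local normal-form computations of \S\ref{sep-local-forms-sec} for the remaining case $z=\lf{\theta}+p$; your second paragraph simply fleshes out the ``similar and simpler'' boundary-of-$\lf{D}$ calculation the paper alludes to, and your identification of the twist $n\lf{\theta}$, $n\in\{2,3\}$, with the sepcanonical multiplicity is exactly the observation already made in \S\ref{mod-bn-sep-interpret}.
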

 For application in inductive arguments, we mention another consequence of the above computations, to the behavior of
$\phi_\theta$ away from $\theta$:
\begin{cor}\lbl{mod-bn-sep-other-pt-cor}For $X_0$ as above,
if $p\in \lf{X}(\theta)$ is a Weierstrass point distinct from
$\lf{\theta}$, then a local matrix for the modified \bn
map $\phi_\theta$ near $2p$ has the form  \eqref{bn-rel-he2}
where $z_2, ...,z_g$ are local equations for the closure of the hyperelliptic locus near $0\in B$.
\end{cor}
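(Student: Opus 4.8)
The plan is to reduce, in a neighbourhood of $2p$, to the smooth hyperelliptic normal form of \S\ref{normal forms}. First I would note that, since $p\neq\lf{\theta}$, the scheme $2p$ is disjoint from the node $\theta$; hence near $2p$ the relative Hilbert scheme $X\sbr 2._B$ involves no blow-up and coincides with the relative second symmetric product of the (smooth near $p$) family, with relative coordinates $\sigma_1,\sigma_2$ together with coordinates pulled back from $B$, and the only boundary component of $X\sbr 2._B$ through $2p$ is $\lf{D}=(\lf{X}(\theta))\sbr 2._{\del_\theta}$, which there equals $\pi^{*}\del_\theta$. Consequently the right-hand echelon datum $\rt{\chi}(\theta)$, being supported on $\rt{D}$, is trivial near $2p$, so only $\lf{\chi}(\theta)$ contributes to $\be_\theta$ there. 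By the comparison diagram \eqref{interp-1node} and the discussion of \S\ref{mod-bn-sep-interpret}, over $\lf{D}^{o}\ni 2p$ the modified map $\phi_\theta$ is the Brill--Noether map of $\omega(-2\lf{X})$, whose restriction to $\lf{X}(0)$ is $|\omega_{\lf{X}(0)}(2\lf{\theta})|+\lf{\theta}$ (rather than $|\omega_{\lf{X}(0)}(3\lf{\theta})|$) precisely because the pair $(\rt{X}(0),\rt{\theta}(0))$ is hyperelliptic; away from $\lf{\theta}$ this is the restriction near $p$ of the complete canonical system of the hyperelliptic curve $\lf{X}(0)$, for which the Weierstrass point $p$ is a ramification point with vanishing sequence $0,2,4,\dots$. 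Thus near $2p$ the one-point modified evaluation map $\be_\theta\to\omega$ is in the Weierstrass normal form \eqref{normal-form-sep-eq}, now centred at $p$.

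Next I would run the computation of \S\ref{normal forms} verbatim, with $p$ in the role of the Weierstrass point. As noted after \eqref{Lambda-theta}, the two-point matrix of $\phi_\theta$ is produced from the one-point normal form purely through the $\Lambda_2$-formulas of \S\ref{hilb-review}; so, choosing a local frame of $\be_\theta$ near $2p$ adapted to the vanishing at $p$, writing the ``live'' frame elements as $x^{2i}+z_ix^{2i-1}+\sum_j w_{ij}x^j$, absorbing $z_1$ by a change of the (non-distinguished, hence freely movable) point $p$ and fibre coordinate, and performing the usual column operations, one arrives at the matrix \eqref{bn-rel-he2}. It then remains to identify the parameters $z_2,z_3,\dots$ occurring there. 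By construction the degeneracy scheme $\bD_2(\phi_\theta)$ is cut out near $2p$ by $(\sigma_1,z_2,z_3,\dots)$. On the other hand, by Proposition \ref{single-sep-prop}(2) one has $\bD_2(\phi_\theta)=\overline{\he^{2}_{B^{o}}}\cup R_\theta$ globally, and since $2p$ is disjoint from $\theta$ the component $R_\theta$ misses it; hence near $2p$ we have $\bD_2(\phi_\theta)=\overline{\he^{2}_{B^{o}}}$, which is regular of codimension $g-1$ and sits over $\overline{\he_B}$ (of codimension $g-2$) via the projection to $B$. Matching the two descriptions, $(\sigma_1,z_2,z_3,\dots)$ is the ideal of $\overline{\he^{2}_{B^{o}}}$ near $2p$, with $\sigma_1$ the relative equation cutting the hyperelliptic pencil out of the symmetric product; therefore the $z_i$ are pulled back from $B$ and are local equations for $\overline{\he_B}$ near $0$, which is the assertion.

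The one point requiring care is the first step: that near $p$ the system revived by the echelon modification is the \emph{complete} canonical system of $\lf{X}(0)$, so that the vanishing sequence at $p$ is the hyperelliptic one and no spurious ramification is introduced. This is exactly where one uses that $(\rt{X}(0),\rt{\theta}(0))$ is hyperelliptic: by the analysis of \S\ref{mod-bn-sep-interpret} this forces $\rt{\theta}$ to be a base point of $\omega_{\rt{X}}(-\rt{\theta})$, so that the restriction to $\lf{X}(0)$ of $\omega(-2\lf{X})$ acquires an imposed base point at $\lf{\theta}$ and becomes composite with the hyperelliptic pencil rather than very ample. The remaining bookkeeping — matching the ``$p$ stays a Weierstrass point'' locus produced by the normal form with the global hyperelliptic locus $\overline{\he_B}$ — is supplied by Proposition \ref{single-sep-prop}(2),(3), so that no computation beyond \S\ref{normal forms} and \S\ref{sep-local-forms-sec} is required.
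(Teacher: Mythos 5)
Your overall route is the one the paper intends: reduce to the one-point normal form of \S\ref{normal forms} via the comparison diagram \eqref{interp-1node}, apply the $\Lambda_2$-recursion, and pin down the parameters by matching the resulting ideal of $2\times 2$ minors against Proposition \ref{single-sep-prop}. The localization to the left echelon datum, the identification of the revived system on $\lf{X}(0)$ near $p$ as composite with the $g^1_2$ (vanishing sequence $0,2,4,\dots$ at the Weierstrass point $p$), and the final identification of the $z_i$ are all correct.

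One step is glossed over and should be made explicit. The matrix near $2p$ is $2\times g$, and only $g_1+1$ of its columns come from your ``live'' frame elements, i.e.\ from sections of $\omega(-2\lf{X})$ whose restriction to $\lf{X}(0)$ is nonzero near $p$. Running \S\ref{normal forms} verbatim on these alone yields second-row entries $\sigma_1, z^{(\mathrm L)}_2,\dots,z^{(\mathrm L)}_{g_1}$, and the ideal $(\sigma_1,z^{(\mathrm L)}_\bullet)$ has codimension only $g_1$, not $g-1$: it records only the condition that $(\lf{X},p)$ stay hyperelliptic. The remaining $g_2-1$ columns, coming from differentials glued from $H^0(\omega_{\rt{X}}(-2\rt{\theta}))$ and killed on $\lf{X}(0)$, are not negligible: as one moves within $\del_\theta$ off the locus where $(\rt{X},\rt{\theta})$ is a hyperelliptic pair, the gluing condition at the node lets one of them acquire a genuine triple pole at $\lf{\theta}$, hence a restriction near $p$ lying outside $\sym^{g_1}|\eta|$; after column reduction these contribute second-row entries equal to units times the right-hand equations $w_1,\dots,w_{g_2-1}$. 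Only with these included does the second row read $(0,\sigma_1,z_2,\dots)$ with $(z_\bullet)=I(\overline{\he_B})$ — the closure of the hyperelliptic locus imposes hyperellipticity on \emph{both} sides. Your concluding appeal to Proposition \ref{single-sep-prop}(2) does absorb this (and is legitimate given the paper's ordering), but note that the paper's own verification of that proposition ``off $R_\theta$'' is essentially the present computation; for a self-contained argument one should replace the appeal by the dimension count: the boundary part of $\bD_2(\phi_\theta)$ near $2p$ has codimension $g$, so no component of the degeneracy scheme lies in $\del_\theta$, whence the $g-2$ pulled-back second-row entries form a regular sequence cutting out $\overline{\he_B}$.
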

\subsection{Case of multiples seps}\lbl{multi-sep-sec}
We now extend the previous results to the case of multiple separating nodes. Thus let $\Theta=(\theta_1,...,\theta_n)$ be a collection of separating nodes of the versal family $X/B$. Then we get
a collection of mutually transverse echelon data as in
\eqref{echelon-data-sep-node}:
\[\chi(\Theta)=(\lf{\chi}(\theta_i), \rt{\chi}(\theta_i), i=1,...,n).\]
Let $\be_\Theta$ be the associated multi-echelon
modification and
\[\phi_\Theta:\bE_\Theta\to\Lambda_2(\omega)\]
the resulting modification of the \bn map.
\begin{prop}\lbl{all-seps-prop}
Notations as above.
Then for any length-2 subscheme $z$ contained in a $\Theta$-separation
component of a fibre $X_0$ but not supported on $\Theta$, the
image of $\phi_\Theta$ over $z$ coincides with that of the Brill-Noether map of the $\Theta$-sepcanonical system of $X_0$.
Further, the degeneracy locus of $\phi_\Theta$ coincides
near $(X_0)\sbr 2.$ with
\[\overline{\he^2}\cup\bigcup\limits_{i=1}^nR_{\theta_i}.\]
\end{prop}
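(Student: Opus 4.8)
The plan is to deduce the proposition from the single-separating-node analysis of Proposition~\ref{single-sep-prop}, combined with the sepcanonical interpretation of Proposition~\ref{sepcanonical-prop}. Both assertions are local on $X\sbr 2._B$ -- the first near the scheme $z$, the second near $(X_0)\sbr 2.$ -- so I may assume $X_0$ lies over $\bigcap_i\del_{\theta_i}$ and write $X_0=\bigcup_YY$ for its $\Theta$-separation. First I would check that the echelon data $\chi(\Theta)$ are pairwise transverse: the divisors $\del_{\theta_i}$ meet with normal crossings in the versal base (\cite{fab-alg}), the divisors $\lf{D}(\theta_i),\rt{D}(\theta_i)$ on $X\sbr 2._B$ inherit normal crossings, and the relevant saturations are locally free by the direct-image computations of \S\ref{mod-bn-sep-sec}. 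This makes the multi-echelon modification $\be_\Theta$ and the factored map $\phi_\Theta$ well-defined, and since modification with respect to a single $\chi(\theta_i)$ is an isomorphism away from the boundary over $\del_{\theta_i}$, the whole construction is supported near $\bigcup_i(\lf{D}(\theta_i)\cup\rt{D}(\theta_i))$; this is what lets me treat the nodes essentially one at a time.

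Next I would set up the $n$-node comparison diagrams generalising the square~\eqref{interp-1node}. Fix a $\Theta$-separation component $Y$ and a length-$2$ scheme $z\subset Y$ disjoint from $\Theta$, orienting each sep with a side meeting $Y$ so that $Y$ lies on its left. For each admissible tuple of twisting orders (ranging over the echelon chains of the $\chi(\theta_i)$) one gets a commutative square whose left column is the ordinary Brill--Noether map of a twist $\omega(-\sum_i a_i\lf{X}(\theta_i))$, whose bottom row is the induced map on $\Lambda_2$, and whose two vertical maps are isomorphisms over the interiors of the relevant boundary strata, in particular near $z$. Hence, near $z$, the image of $\phi_\Theta$ is spanned by the images, over all these tuples, of the Brill--Noether maps of $\omega(-\sum_i a_i\lf{X}(\theta_i))|_Y$. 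By Proposition~\ref{sepcanonical-prop}, taken with $\Theta_2=\emptyset$ (no biseps, so $B'=B$ and no azimuthal modification enters), the sum of these restricted linear systems on $Y$ is exactly the $\Theta$-sepcanonical system $|\omega_{X_0}|^\sep_\Theta|_Y$; and since $z$ avoids the induced marks, $\omega_{X_0}^\sep|_Y=\omega_Y=\omega_{X_0}|_Y$ near $z$, so the image of $\phi_\Theta$ over $z$ equals the image over $z$ of the Brill--Noether map of the sepcanonical system. That is the first assertion.

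For the degeneracy locus I would stratify a neighbourhood of $(X_0)\sbr 2.$ into three pieces. (a)~If $z\subset Y$ is disjoint from $\Theta$, the previous step identifies $z\in\bD_2(\phi_\Theta)$ with the failure of $|\omega_{X_0}|^\sep_\Theta|_Y$ to separate $z$; as $z$ misses the marks, Theorem~\ref{sepcanonical-thm} makes this equivalent to $Y$ (with its induced marking) being hyperelliptic with $z$ a hyperelliptic pair. To see this sublocus equals $\overline{\he^2_{B^o}}$, I would argue as in Corollary~\ref{mod-bn-sep-other-pt-cor}: the local normal form of $\phi_\Theta$ near such a $z$ depends only on the one-point evaluation map (as noted after~\eqref{Lambda-theta}), hence has the shape~\eqref{bn-rel-he2} with $z_2,\dots,z_g$ local equations of $\overline{\he_{B^o}}$, whose ideal of $2 \times 2$ minors cuts out $\overline{\he^2_{B^o}}$, transversally to the diagonal. (b)~If $z$ meets two distinct components, it is supported at $p\in Y$, $q\in Y'$ separated by some $\theta_i\in\Theta$; such a $z$ lies in a mixed product where $\sigma_2$ vanishes, and there (as in the computation preceding~\eqref{Lambda-theta}) the minor ideal of the local matrix collapses to $(\sigma_1)$, i.e.\ locally to $\bigcup_jR_{\theta_j}$, so this stratum contributes nothing beyond $\bigcup_jR_{\theta_j}$ and every degenerate $z$ in it lies there. (c)~If $z$ is supported at a node $\theta_i$, the other seps are irrelevant locally and the single-node form~\eqref{Lambda-theta}, whose minor ideal contains $\sigma_1$, gives $z\in\bD_2(\phi_\Theta)$. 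Assembling (a)--(c) yields $\bD_2(\phi_\Theta)=\overline{\he^2_{B^o}}\cup\bigcup_{i=1}^nR_{\theta_i}$ near $(X_0)\sbr 2.$.

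The hard part will be the bookkeeping in the second and third steps: one must verify that the $n$-node comparison diagrams really exhibit $\phi_\Theta$, near a $\Theta$-component $Y$, as the \emph{full} sepcanonical Brill--Noether map -- matching the echelon filtration and the pole orders $2$ or $3$ at each $\lf{\theta_i}$ with the twisting divisor and residue conditions of $|\omega_{X_0}|^\sep_\Theta$ -- and that the single-node normal forms glue consistently across the three strata. Proposition~\ref{sepcanonical-prop} does the conceptual work, so what is left is the kind of routine-but-lengthy verification the introduction already flags.
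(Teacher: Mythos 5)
Your proposal is correct and follows essentially the same route as the paper: orient the seps with $Y$ on the left, form a multi-sep comparison diagram whose left column is the Brill--Noether map of the twist $\omega(-2\sum_i\lf{X}(\theta_i))$, identify that column with the Brill--Noether map of the $\Theta$-sepcanonical system via the sepcanonical extension result (Proposition 6.10 of \cite{canonodal}, the published form of the proposition you invoke), and reduce schemes meeting $\Theta$ to the single-sep local forms of \S\ref{sep-local-forms-sec}. The only cosmetic difference is that the paper works with the single maximal-twist diagram rather than the family of diagrams you span over; since the smaller twists are subsheaves of the maximal one, this changes nothing.
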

\begin{proof}
$Y$ be a $\Theta$-separation component of $X_0$ and assume as usual that  each $\theta\in\Theta$ is oriented with $Y$ on
its left ($Y$ may or may not meet $\theta$). Set
\[\be^{\Theta, Y}=\pi_*(\omega_{X/B}(-2\sum\limits_{i=1}^n\lf{X}(\theta_i))).\]
Then we get a \emph{multi-sep comparison diagram}:
\eqspl{multisep-comp-diag-eq}{
\begin{matrix}
\be^{\Theta,Y}(\sum\lf{D}(\theta_i))&\to&\be_\Theta\\
\downarrow&&\downarrow\phi_\Theta\\
\Lambda_2(\omega(-2\sum\lf{X}(\theta_i)))
(2\sum\lf{D}(\theta_i))&\to&\Lambda_2(\omega).
\end{matrix}
}
By \cite{canonodal}, Proposition 6.10, the left vertical map
coincides with the ordinary \bn map associated to
the $\Theta$-sepcanonical of $X$ restricted on $Y$. Moreover, for any  scheme $z$ on $Y$ disjoint from $\Theta$, the lower
horizontal map is an \iso. This proves the Proposition for all schemes $z$ on $Y$ disjoint from $\Theta$. The analysis
of the diagram at a scheme meeting $\Theta$ is
analogous to the case of a single sep in \S \ref{sep-local-forms-sec}. This concludes the proof.
\end{proof}
A nodal curve $X_0$ is said to be of  \emph{pseudo-
compact type} if it has no proper biseps;
or equivalently, if every connected component of
the separation of all seps of $X_0$
is 2-inseparable.
\begin{cor}\label{compact-type-cor}
If $X_0$ is a fibre of pseudo- compact type and $\Theta$
is the set of all seps occurring on $X_0$, , then near
$X_0\sbr 2.$, the degeneracy locus of $\phi_\Theta$ coincides schematically with
\[\overline{\he^2}\cup\bigcup\limits_{i=1}^nR_{\theta_i}.\]
\end{cor}
\begin{proof}
We will use Theorem 7.2 of \cite{canonodal} and argue as in
\S \ref{mod-bn-sep-interpret}.
To begin with, if $X_0$ is not hyperelliptic, then the fact that the sepcanonical system is essentially very ample,
together with a local analysis similar to \S \ref{sep-local-forms-sec} shows that the degeneracy locus
of $\phi_\Theta$ locally equals $\bigcup R_{\theta_i}$ .
Therefore we may assume $X_0$ is hyperelliptic. Pick
$\theta\in\Theta$, which is necessarily (bilaterally) hyperelliptic.
Then the sepcanonical system
 of $\lf{X_0}(\theta)$ is (simply) ramified on $\lf{\theta}$.
 By induction, as in Corollary \ref{mod-bn-sep-other-pt-cor},
  we may assume $\he^1_{\lf{g}}\times\rt{\del}$ is regular of
 codimension $\lf{g}-1=g(\lf{X_0}(\theta))-1$ in an appropriate versal
 family $\lf{\del}\times\rt{\del}$
 mapped to by $\del_\theta$. Likewise for the right side.
 Therefore, the local analysis as in \S \ref{mod-bn-sep-interpret} applies, yielding the conclusion.
\end{proof}

\newsection{Modifying Brill-Noether, case (ii):
disjoint separating binodes}
\lbl{bn-bisep}
Our next focus is on biseps, i.e. separating binodes.
That this requires a modification of the Brill-Noether map is already clear from the fact that, for any smooth connected curve $C$ with points
$p \neq q$, the system $|\omega_C(p+q)|$ fails to separate $p$ and $q$. Additionally, there are curves with a binode that is
hyperelliptic on one
 side only, for which \bn drops rank even though they are not limits of hyperelliptics. Because this situation occurs in codimension 2, some birational modification of Hilb will be required as well before an echelon
modification can be applied to the Hodge bundle.
We begin in \S \ref{azimuthal-curves-sec}  by analyzing a 'naive' approach to this issue,
based on modifying
the family of curves. The approach we will actually
use, based on modifying the
Hilbert scheme directly, will be considered subsequently, starting in \S \ref{mod-hilb-sec}.
\subsection{Azimuthal curves : Single
binode}\lbl{azimuthal-curves-sec}
 Fix a bisep, i.e. a properly separating binode $\vtheta$ on our family $X/B$, with corresponding locus \[\del=\del_\vtheta\subset B\] and sides $\lx, \rx$ defined over $\del$.
We view $\vtheta$ as \emph{oriented}, i.e. with fixed
choice of sides.
 Typically, $\vtheta$ will be defined only over a suitable analytic
or \'etale neighborhood of a boundary curve.
 Our purpose in this subsection is to construct a modification of $X/B$, a new family called an azimuthal modification,
  in which $\lx$, at least, becomes a Cartier divisor.
 One could then look at the Hilbert scheme of the new family. In the sequel, we will actually do something a little different, viz. modify the Hilbert scheme directly.
 Nonetheless, comparing the two constructions will be important in analyzing and interpreting the latter one. \par
 The idea of the construction is an obvious one:
 first blow up $\del_\vtheta$, then blow up the inverse image
 of $\lf{X}$, which is a Weil divisor. We proceed with the
 details.\par
First, let $B(\vtheta)$ be the blowup
of $B$ in $\del_\vtheta$, with exceptional divisor
\[P(\vtheta)=\P(\psi_1\oplus\psi_2),\  \psi_i=_L\psi_i\otimes_R\psi_i,\]
and let $X_{B(\vtheta)}=X\times_B B(\vtheta)$ be the base-changed family. Let $S_i\subset P(\vtheta)$ denote the section
$\P(\psi_i)$. We recall from Definition 6.1 of \cite{canonodal} that elements of $P(\vtheta)$ are called middle {azimuths}
at $\vtheta$,
irregular or regular according as they are in $S_1\cup S_2$ or not.\par
 In local coordinates $x_iy_i=t_i$ near $\theta_i, i=1,2$,
a local coordinate on a suitable open in $B(\vtheta)$ containing
$P(\vtheta)\setminus S_1$ is
$w=[t_2/t_1]$ and there, the base-changed family is given by
\eqspl{base-changed-eq}{ x_1y_1=t_1, \ x_2y_2=t_1w.}
The inverse image of $\lx$ here is defined near $\theta_1$ resp. $\theta_2$
by $y_1$ resp. $ (y_2, t_1)$, so it is a Weil divisor,
non $\Q$-Cartier near $\theta_2$;
similarly for $\rt{X}$.
\par Now let \[b:X_{ B(\vtheta),L}\to X_{B(\vtheta)}\] denote the blowup of $X_{B(\vtheta)}$
 along the Weil divisor $\lf{X}$. The family $X_{B(\vtheta), L}/B(\vtheta)$ is called
the \myit{left azimuthal modification} of $X/B$ corresponding
to the binode $\vtheta$. The construction may be diagrammed as follows
\eqspl{}{
\begin{matrix}
X_{B(\vtheta), L}&\to&X_{B(\vtheta)}&\to&X\\
&\searrow&\downarrow&\square&\downarrow\\
&&B(\vtheta)&\to&B
\end{matrix}
} with the upper left horizontal arrow a small blowup.
Using local coordinates as above, especially
\eqref{base-changed-eq}, it follows that
$X_{B(\vtheta), L}$ is \emph{virtually smooth} (smooth if
$X/B$ is locally versal at $\vtheta$). \par
Locally near the inverse image of $\theta_2$,
where  the function $w$ is regular,  $X_{B(\vtheta), L}$ is covered by opens $U_1, U_2$ so that
$[t_1/y_2]$ (resp. $ [y_2/t_1]$) is regular in $U_1$
(resp. $U_2$). There is
a natural
  'left
side' divisor $\lx(\vtheta)$,   defined locally by $y_2$ in
$U_1$,  or $t_1$ in $U_2$,
which coincides with the inverse image in the blowup of \[\lx=(t_1=y_2=0).\] Recall that we are working over a
neighborhood of $P(\vtheta)\setminus S_1$ in $B(\vtheta)$.
Over this neighborhood, the exceptional locus of the
blowup is a $\P^1$ bundle $P_{\theta_2}$ over $\vtheta_2|_{S_2}$, which is of codimension 2.
Note $P_{\theta_2}$ has equations \[t_1= [t_2/t_1]=
 x_2= y_2=0.\]
In $U_1$, $P_{\theta_2}$ is defined by $y_2, [t_2/t_1]$ (since
\[t_1=[t_1/y_2]y_2, \ x_2=[t_1/y_2][t_2/t_1]).\]
Similarly in $U_2$, $P_{\theta_2}$ is defined by $t_1, x_2$.
All in all, the exceptional locus of the blowup
consists of $P_{\theta_1}\cup P_{\theta_2}$.
\par This construction is decidedly not left-right symmetric however.
In $U_1$,
the inverse image of
$\rx$ is $\Zeros (t_1=[t_1/y_2]y_2, x_2=[t_1/y_2][t_2/t_1])$.
Therefore it consists of the divisor of $[t_1/y_2]$, which is
 the proper transform $\rt{\tilde X}$ of $\rt{X}$, plus
 $P_{\theta_2}$.
 In $U_2$, the inverse image of $\rt{X}$ coincides with $P_{\theta_2}$ and $\rt{\tilde X}$ is disjoint from it.
 Thus, denoting the blowup map by $b$, we have all in all
 \[b\inv(\rt{X})=\rt{\tilde X}\cup P_{\theta_1}\cup P_{\theta_2}\]
 where $\rt{\tilde X}$ is a Cartier divisor mapping isomorphically to $\rt{X}$. On the other hand,
 \[P_{\theta_1}\cup P_{\theta_2}\subset \lf{X}(\vtheta)\]
 therefore $\lf{X}(\vtheta)$, the inverse image of $\lf{X}$,
 is a divisor.
 In fact, $\lf{X}(\vtheta)$ is just the blowup of $\lf{X}_{P(\vtheta)}$ in its regular codimension-2
 subvariety $\theta_1|_{S_1}\coprod \theta_2|{S_2}$
 (this has codimension 3 in $X_{B(\vtheta)}$). In the above
 coordinates, $\theta_2|{S_2}$ is defined by $[t_2/t_1]=x_2=0$.

The full boundary of the azimuthal modification
 is the union:
\eqspl{full-boundary}{ X_{L, B(\vtheta)}\times_{B(\vtheta)}P(\vtheta)
=\lf{X}(\vtheta)\cup\rt{\tilde X}}
and its fibres (with $\vtheta$ given)
are called \emph{left azimuthal curves}, regular or irregular, or curves with a \emph{left azimuthal binode} in $\vtheta$.
\begin{remarks} We list some elementary properties of this construction.\par\begin{enumerate}[1)]
\item
A (left or right) regular azimuthal fibre
is isomorphic to the corresponding fibre over $B$.
In this case, the left and right azimuthal structures on $\vtheta$ are the same, constant over the fibre, and simply given by
a regular element (\emph{azimuth})
\eqspl{regular-azimuth}{\a\in\P(\psi_{\theta_1}\oplus \psi_{\theta_2})
=\P(T^*_{\lx,\theta_1}\otimes T^*_{\rx,\theta_1}\oplus
T^*_{\lx,\theta_2}\otimes T^*_{\rx,\theta_2}).}

\item As for left irregular azimuthal fibres,
living over $S_1\coprod S_2$,
$S_1$ is the (transverse) intersection of $P(\vtheta)$ with the proper transform of $\del_{\theta_1}$, the boundary divisor corresponding to $\theta_1$. It parametrizes
semistable triangular curves of the form { $Y=\nolinebreak\lx\cup\rx\cup F$}
where $F$ is a bridge $\P^1$ meeting $\lx, \rx$ in  $\lf{\theta}_1, \rt{\theta}_1$,
respectively and $\lx$ and $\rx$ meet additionally in $\theta_2$. Thus $Y$ is stably equivalent to $\lf{X}\cup\rt{X}$, the corresponding fibre of $X/B$.
Note that by construction, $F\subset \lx(\vtheta)$ ('$F$ tilts left')
(because we are in the left azimuthal modification).
Going out of the point $[Y]\in S_1$ in the direction of $P(\vtheta)$ corresponds to smoothing the node $\lf{\theta}_1$, keeping the other two as nodes.
Going out in the direction of $\del_{\theta_1}$ does the opposite,
smoothing $\rt{\theta}_1$ and $\theta_2$ and keeping $\lf{\theta}_1$ as a node.\par
Obviously, analogous comments apply with 1 replaced by 2 and L
replaced by R. Thus, a  bridge at either $\theta_i$
can tilt left or right.
\item It follows from \eqref{full-boundary} that the normal bundle
to $\lf{X}(\vtheta)$ is
\eqspl{normal-bundle-azimuth}{
\check N_{\lf{X}(\vtheta)/X_{\rmL, B(\vtheta)}}=\pi^*(\O_{P(\vtheta)}(1))
\otimes\O(\lf{\vtheta}')
} where $\lf{\vtheta}'$ is the pullback of $\lf{\vtheta}$, which coincides with $\vtheta$ over the regular azimuthal fibres; 
over the irregular ones, the node $\theta_i$ is replaced by the
other node on $F$ (i.e. where $F$ meets $\rt{X})$. Here
the first factor corresponds to a choice of middle azimuth
(see Definition 6.1 of \cite{canonodal}). We have
\eqspl{}{\check N_{\lf{X}(\vtheta)/X_{\rmL, B(\vtheta)}}|_{\lf{\theta_i}}=\rt{\psi}_i}
In particular, for a regular azimuthal fibre,
$\check N_{\lf{X}(\vtheta)/X_{{\rm L}, B(\vtheta)}}|_{\vtheta}$
parametrizes \emph{right} azimuths at $\vtheta$.

\item We refer to \cite{canonodal}, \S 6 (especially Definition 6.1).
A \emph{hyperelliptic} binode $\vtheta$
on a fibre $X_0$ admits a uniquely determined regular azimuthal
structure $\zeta=(\lf{\zeta}, \md{\zeta}, \rt{\zeta})$ called a
\emph{hyperelliptic azimuth}. This results from the hyperelliptic
identification between left (resp. right) cotangents at $\theta_1$ and $\theta_2$. Then
$X_0$ corresponds to a unique regular azimuthal fibre on either the
right or left azimuthal modifications. \par On the other hand if a
regular azimuthal fibre (left or right) $X_{\md{\zeta}}$ corresponds
to a fibre $X_0$ of $X$ over $\del_\vtheta$
 where $\vtheta$ is a left- hyperelliptic binode,
 then $X_{\md{\zeta}}$
carries a canonical  compatible azimuthal structure
$(\lf{\zeta}, \md{\zeta}, \rt{\zeta})$ where
$\rt{\zeta}$ is induced by $\lf{\zeta}, \md{\zeta}$ as in
Definition 6.1 of \cite{canonodal}.
\item The constructions and results of this subsection extend
directly to a collection $\Theta=\{\vtheta_1,...,\vtheta_n\}$ of
disjoint biseps . This is because the boundary loci
$\del_{\vtheta_i}$ are in general position so their common blowup
$B(\Theta)$ is well behaved (smooth if $B$ is smooth and $X/B$ is
versal). Also the 
Weil divisors $\lf{X}(\vtheta_i)\times_{\del_{\vtheta_i}}B(\Theta)$
are in general position.
\par\end{enumerate}
\end{remarks}

\newsubsection{Modifying Hilb}\lbl{mod-hilb-sec}
In this section we construct azimuthal modifications
applied directly on the Hilbert scheme.
We continue with the notation and assumptions of the previous
subsection. Consider
the
regular, codimension-2 subvarieties
\[\lf{X}\sbr 2._{\del}, \rt{X}\sbr 2._{\del}\subset X\sbr 2._B.\]
These meet transversely in the codimension-4 subvariety
$\vtheta_{\del}$ which is a cross-section over $\del$.
The transversality (even just properness) of the
intersection easily implies that the ideal of the schematic union
 is a product
 \[\cJ_\vtheta:=\I_{(\lf{X})\sbr 2._{\del}\cup (\rt{X})\sbr 2._{\del}}=\I_{(\lf{X})\sbr 2._{\del}}
  \I_{(\rt{X})\sbr 2._{\del}},\] and moreover that the successive blowup of $X\sbr 2._B$ in
  $(\lf{X})\sbr 2._{\del}$ and $ (\rt{X})\sbr 2._{\del}$ in either order is isomorphic
  to the blowup of $X\sbr 2._B$ in $(\lf{X})\sbr 2._{\del}\cup  (\rt{X})\sbr 2._{\del}$
  (see also the local calculations below).\par
  We also
 consider the base-changed family $X_{B(\vtheta)}$. Let $P=P(\vtheta)$
and  consider the Weil divisors
\[(\lf{X})\sbr 2._P, (\rt{X})\sbr 2._P\subset X\sbr 2._{B(\vtheta)}=X\sbr 2._B\times_B B(\vtheta).\]
These  intersect  in $\vtheta_P$ itself,
considered as relative length-2 subscheme of $X_P/P$, thus forming a section of $X\sbr 2._P$ and having codimension 3(!)
in $X\sbr 2._{B(\vtheta)}$. We denote by $\cJ'_\vtheta$ the product ideal
\[\I_{(\lf{X})\sbr 2._P}.\I_{ (\rt{X})\sbr 2._P},\]
i.e. the pullback ideal of $\cJ_\vtheta$.

\begin{defn} The \myit{azimuthal Hilbert scheme}
\mbox{ (resp. \myit{extended azimuthal Hilbert scheme})}
 of $X/B$ associated to
$\vtheta$, denoted $X\sbc 2._B(\vtheta)$
(resp. $X\sbc 2._{B(\vtheta)}$) is
 the blowup of the ideal $\cJ_\vtheta$ on $X\sbr 2._{B}$
  (resp.
$ \cJ'_\vtheta$ on $X\sbr 2._{B(\vtheta)}$).
\end{defn}
A point of $X\sbc 2._B(\vtheta)$ is called an azimuthal length-2 scheme of $X/B$ with respect to $\vtheta$. We will see in Corollary \ref{interior-azimuthal-cor} that in the case of
an interior scheme (strictly to one side of $\vtheta$), an azimuthal scheme in fact consists of a scheme together with
a (left or right) azimuth at $\vtheta$.
We denote by $\lf{D}(\vtheta), \rt{D}(\vtheta)$ the respective
inverse images of $(\lf{X}(\vtheta))\sbr 2._P, (\rt{X}(\vtheta))\sbr 2._P$ in $X\sbc 2._B(\vtheta)$.\par
Note the natural map \[X\sbc 2._{B(\vtheta)}\to X\sbc 2._B(\vtheta)\times _B B(\vtheta);\]
in fact $X\sbc 2._{B(\vtheta)}$ can be identified with the unique component of $X\sbc 2._B(\vtheta)\times_B B(\vtheta)$ dominating $X\sbc 2._B(\vtheta)$.
Clearly, $X\sbc 2._{B(\vtheta)}$ and $X\sbc 2._B$ are
right-left symmetric, i.e. independent of the orientation of the binode $\vtheta$.
Though $X\sbc 2._{B(\vtheta)}$ is in some sense more natural
(e.g. it is flat over $B(\vtheta)$), the
objects we need to work with are defined already over
its smaller relative $X\sbc 2._B(\vtheta)$, so the latter
is ultimately preferable.
\par

 We describe $X\sbc 2._{B(\vtheta)}$ and $X\sbc 2._B(\vtheta)$ in local coordinates over the point $\vtheta$; other local descriptions are
simpler. We may assume the family is given in local analytic coordinates
near $\theta_1, \theta_2$ respectively by
\[ x_1y_1=t_1, \ x_2y_2=t_2\]
where $x_1, x_2$ and $y_1, y_2$ are respective local coordinates
on open sets $\lf{X}_1, \lf{X}_2\subset \lx$, $\rt{X}_1, \rt{X}_2\subset \rx$.
Locally analytically near $\vtheta$, $X\sbr 2._B$ is a
Cartesian product $(\lf{X}_1\cup \rt{X}_1)\times (\lf{X}_2\times \rt{X}_2)$.
Let $u=u_2/u_1$
be an affine coordinate on the finite part of
the projectivized normal bundle
$\P(N_{(\lf{\theta}_1,\lf{\theta}_2)}{\lx_1\times \lx_2})$,
where $u_1, u_2$ are linear coordinates on the normal bundle
(these will not be needed in the sequel).
For later reference, the latter bundle can be identified
as $_L\psi_1\oplus_L\psi_2$ with the obvious notations.
On  the blowup $\Bl_{\vtheta}X\sbr 2._B$, we can write
$u=x_2/x_1$ so $u$ measures the 'speed ratio' as 2 points approach the $y$-axis.
Similarly for $v=v_2/v_1=y_2/y_1$ on the right side and $w=w_2/w_1=t_2/t_1$
on the base (i.e. $w$ is an affine coordinate on $P(\vtheta)/\del_\vtheta$).
So we can cover the exceptional locus
in $X\sbc 2._{B(\vtheta)}$
by 8 opens where ($u$ or $u'=u\inv$) and ($v$ or $v'=v\inv$) and ($w$ or $w'$) are regular; in the case of $X\sbc 2._B(\vtheta)$, the $w$'s don't exist and 4 opens suffice. On the open where $u,v, w$ are regular, the equations for
 $X\sbc 2._{B(\vtheta)}$ are given by
\eqspl{}{
x_1y_1=t_1, x_2=ux_1, y_2=vy_1, t_2=wt_1\\
w=uv }
In the case of $X\sbc 2._B(\vtheta)$, the equations are
\eqspl{}{
x_1y_1=t_1, x_2=ux_1, y_2=vy_1, \\
t_2=x_1y_1uv }
In particular both are virtually smooth there,
  with local
coordinates $x_1, y_1, u,v$ relatively over
 the base ($B$ or $B(\vtheta)$). Here $x_1, y_1$
are equations for $\lf{D}, \rt{D}$ respectively and
 in the case of $X\sbc 2._{B(\vtheta)}$ these together
make up the entire boundary component corresponding to $\vtheta$,
that is, $P=P(\vtheta)$ with equation
$t_1=x_1y_1$. Analogous equations on the other covering opens
show that $X\sbc 2._B(\vtheta)$ is smooth. For example,
on the open where $u,v',w$ are regular, we have
local coordinates $x_1, y_2, v',w$ and the boundary, with equation $t_1=x_1v'y_2$,
 consists of $\lf{D}$ with equation
$y_2$, $\rt{D}$ with equation $x_1$, and $\lf{X}\times\rt{X}$, with equation $v'$.

\newsubsection{Normal bundles}\label{normal-bundles}
Recall that in Lemma \ref{conormal-sep-lem} we computed the conormal bundle to the divisor $(\lf{X}(\theta))\sbr 2._B$ in $X\sbr 2._B$, for a sep $\theta$.
We now extend this to the case of a bisep and the associated
azimuthal modification of the Hilbert scheme.
For a line bundle $L$ on $X$, we denote by $[2]_*L$ or sometimes
 $L\sbr 2.$ its 'norm', a line
bundle on $X\sbr 2._B$ (cf. \cite{internodal}).
Succinctly, \[[2]_*L=L\sbr 2.=\det(\Lambda_2(L))\otimes\O(\Gamma\spr 2.)\]
where $\Gamma\spr 2.$ is the discriminant divisor.
Similarly we have $[2]_*\frak b$ for a divisor $\frak b$ on $X/B$ (for $\frak b$ prime, $[2]_*\frak b$ is geometrically the locus of schemes meeting $\frak b$).
Let $\vtheta$ be a bisep, i.e. a
 properly separating oriented binode on a nodal curve $X$, with sides $\lx(\vtheta), \rx(\vtheta)$, defined over a codimension-2 locus $\del_\vtheta$.
Define the (linear)
\myit{left azimuth bundle} on $(\lx(\vtheta))\sbr 2.$ associated to this data as

\eqspl{}{
\lf{\tilde\a}_{X,\vtheta}=[2]_*(\O(\lf{\theta}_1)\otimes\psi_{\theta_1})\oplus
[2]_*(\O(\lf{\theta}_2)\otimes\psi_{\theta_2})
}
\begin{lem}\lbl{azim-bundle-lem} There is a canonical isomorphism
\eqspl{azimuth bundle formula}{\lf{\tilde\a}_{X,\vtheta}\simeq\check N_{(\lx(\vtheta))\sbr 2._{\del_\vtheta}/X\sbr 2._B}.}\end{lem}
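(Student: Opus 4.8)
The plan is to follow the proof of Lemma \ref{conormal-sep-lem}, the only new feature being that two node-smoothing parameters are now in play; the right-azimuth analogue then follows by the evident left-right symmetry. By Faber's result \cite{fab-alg}, the conormal bundle of $\del_{\theta_i}$ in $B$ is $\psi_{\theta_i}$ (the tensor product of the two branch cotangents at $\theta_i$), and, $\vtheta=(\theta_1,\theta_2)$ being a bisep, $\del_\vtheta=\del_{\theta_1}\cap\del_{\theta_2}$ transversely. Recall from \S\ref{mod-hilb-sec} that $(\lx(\vtheta))\sbr 2._{\del_\vtheta}$ is regular of codimension $2$ in $X\sbr 2._B$, so $\check N:=\check N_{(\lx(\vtheta))\sbr 2._{\del_\vtheta}/X\sbr 2._B}$ is a rank-$2$ bundle on it. A local defining equation $t_i$ of $\del_{\theta_i}$, pulled back to $X\sbr 2._B$, vanishes on $(\lx(\vtheta))\sbr 2._{\del_\vtheta}$; via Faber's identification $\psi_{\theta_i}=\check N_{\del_{\theta_i}/B}$ the classes of the $t_i$ in the conormal sheaf $\I/\I^2$ of $(\lx(\vtheta))\sbr 2._{\del_\vtheta}$ fit together into a canonical bundle map
\[ \iota:\ \psi_{\theta_1}\oplus\psi_{\theta_2}\ \longrightarrow\ \check N . \]

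The core of the argument is the local analysis of $\iota$ near $\vtheta$. Away from the divisor $[2]_*(\lf{\theta}_1)\cup[2]_*(\lf{\theta}_2)$ of schemes meeting one of the nodes, $\iota$ is manifestly an isomorphism. Near $[2]_*(\lf{\theta}_i)$ one uses the local models of \S\ref{hilb-review}: writing $X$ as $x_iy_i=t_i$ near $\theta_i$ (with $\lx=\{y_i=0\}$ and $\lf{\theta}_i=\{x_i=y_i=0\}$), the ideal $\I$ of $(\lx(\vtheta))\sbr 2._{\del_\vtheta}$ is read off either in product-of-branches coordinates (when the two constituent points of the scheme approach distinct nodes) or in the $\sigma_1,\sigma_2,u$ coordinates (when they collide at $\theta_i$), and in each model the relation $t_i=x_iy_i$ (respectively $t_i=u\sigma_2$) shows that the class of $t_i$ in $\I/\I^2$ equals a local equation of $[2]_*(\lf{\theta}_i)$ times a frame vector, whereas the class of $t_j$ ($j\ne i$) stays a complementary frame vector. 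Thus the $i$-th component of $\iota$ vanishes simply along $[2]_*(\lf{\theta}_i)$ and is invertible elsewhere, and the two components degenerate together only on $[2]_*(\lf{\theta}_1)\cap[2]_*(\lf{\theta}_2)$, of codimension $2$. Hence $\iota$ is injective, the saturation of the image of its $i$-th summand is $\psi_{\theta_i}\otimes\O([2]_*(\lf{\theta}_i))$ --- the bundle written $[2]_*(\O(\lf{\theta}_i)\otimes\psi_{\theta_i})$ in the statement, cf. \cite{internodal} --- and the two saturated inclusions assemble to a canonical map $\lf{\tilde\a}_{X,\vtheta}\to\check N$ of rank-$2$ bundles that is an isomorphism off a locus of codimension $\ge 2$.

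Finally, since $(\lx(\vtheta))\sbr 2._{\del_\vtheta}$ is smooth, hence normal, the determinant of this last map is a nowhere-vanishing section of a line bundle (its zero scheme being both divisorial and of codimension $\ge 2$), so the map is an isomorphism; this is \eqref{azimuth bundle formula}. I expect the only genuine work to be the case-by-case local bookkeeping of $\I/\I^2$ at the node, and above all keeping track --- in the $\sigma,u$-coordinates of \S\ref{hilb-review} --- of the simple-vanishing claim when a length-$2$ scheme degenerates onto $\lf{\theta}_i$; everything else is formal.
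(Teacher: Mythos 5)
Your proposal is correct and follows essentially the same route as the paper: both construct the canonical map $\psi_{\theta_1}\oplus\psi_{\theta_2}=\check N_{\del_\vtheta/B}\to\check N$, observe that the $i$-th component degenerates simply along $[2]_*(\lf{\theta}_i)$ so that its saturated image is $[2]_*(\O(\lf{\theta}_i)\otimes\psi_{\theta_i})$, and conclude that the assembled map $\lf{\tilde\a}_{X,\vtheta}\to\check N$ is an isomorphism because it is one in codimension $1$. Your extra local bookkeeping in the $x_iy_i=t_i$ and $\sigma_1,\sigma_2,u$ charts just makes explicit what the paper dismisses as obvious.
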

\begin{proof}Indeed there is a canonical map
\[\psi_{\theta_1}\oplus\psi_{\theta_2}=\check N_{\del_\vtheta/B}\to \check N_{(\lx(\vtheta))\sbr 2._{\del_\vtheta}/X\sbr 2._B}=:
\check N.\]
The component map $\psi_{\theta_i}\to\check N$ drops rank precisely on the divisor $[2]_*(\lf{\theta}_i)$
so the saturation of the image of the component map
 is exactly
$[2]_*(\O(\lf{\theta}_i)\otimes\psi_{\theta_i})$. Putting the
two components together, we get a canonical map
$\lf{\tilde\a}_{X,\theta}\to\check N$. This map is obviously an
 isomorphism in codimension 1, hence we obtain \eqref{azimuth bundle formula}.
\end{proof}

An element of the projectivization   $\lf{\a}_{X,\theta}:=\P(\lf{\tilde\a}_{X,\theta})$ is
called a \myit{left azimuthal scheme} of length 2 (or left azimuthal structure
on the underlying scheme). This projectivization coincides
 with the exceptional divisor of the blowup of $(\lf{X})\sbr 2._B$ on $X\sbr 2._B$.
 Note that it contains 2 distinguished sections, viz.
$\P([2]_*(\O(\lf{\theta}_i)\otimes\psi_{\theta_i})), i=1,2$;
elements of these are called \emph{singular} azimuthal schemes,
while elements of the complement of their union are said to
be \emph{regular}.

 \par Note that the fibre of $\lf{\tilde\a}_{X, \vtheta}$ at
$\lf{\theta}_1+\lf{\theta}_2$ is naturally isomorphic (by residue)
to $\rt{\psi}_{\theta_1}\oplus\rt{\psi}_{\theta_2}$. On the other hand the
fibre at any 'interior' scheme, i.e. one disjoint from $\lf{\theta}_1+\lf{\theta}_2$, is naturally
isomorphic to $\psi_{\theta_1}\oplus\psi_{\theta_2}$. Thus
\begin{cor}\lbl{interior-azimuthal-cor}
Notations as above, an interior left [resp. left regular ] azimuthal scheme of $X$ with respect to $\vtheta$
consists of an interior subscheme of $\lf{X}(\vtheta)$
together with a middle
[resp. middle regular] azimuth of $X$ at $\vtheta$.
\end{cor}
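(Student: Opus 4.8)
The plan is to read this off from Lemma \ref{azim-bundle-lem} together with the fibrewise description of the left azimuth bundle $\lf{\tilde\a}_{X,\vtheta}$ recorded just above. Recall that a left azimuthal scheme of $X$ with respect to $\vtheta$ is by definition a point of $\lf{\a}_{X,\vtheta}=\P(\lf{\tilde\a}_{X,\vtheta})$, the exceptional divisor of the blowup of $(\lx(\vtheta))\sbr 2._{\del_\vtheta}$ in $X\sbr 2._B$, and it is \emph{interior} precisely when it lies over a length-$2$ subscheme $\zeta$ of $\lx(\vtheta)$ disjoint from $\lf{\theta}_1+\lf{\theta}_2$. So the one thing to do is to restrict $\lf{\tilde\a}_{X,\vtheta}$ over such a $\zeta$ and identify the fibre.

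First I would observe that, since $\zeta$ avoids each $\lf{\theta}_i$, the effective divisor $\lf{\theta}_i$ is disjoint from $\supp(\zeta)$, so the constant section $1$ canonically trivialises $\O_X(\lf{\theta}_i)$ near $\supp(\zeta)$ and hence identifies $[2]_*(\O_X(\lf{\theta}_i)\otimes\psi_{\theta_i})$ with $[2]_*\psi_{\theta_i}$ near $\zeta$. Combining this with the assertion recorded above that $[2]_*\psi_{\theta_i}$ restricts over an interior scheme to $\psi_{\theta_i}$, one gets a canonical isomorphism $\lf{\tilde\a}_{X,\vtheta}|_\zeta\simeq\psi_{\theta_1}\oplus\psi_{\theta_2}$. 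Since $\psi_{\theta_i}=\lf{\psi}_{\theta_i}\otimes\rt{\psi}_{\theta_i}$ by Faber's formula, $\P(\psi_{\theta_1}\oplus\psi_{\theta_2})$ is precisely the space of middle azimuths at $\vtheta$ in the sense of Definition 6.1 of \cite{canonodal}; hence an interior left azimuthal scheme lying over $\zeta$ is exactly the pair consisting of $\zeta$ together with a middle azimuth $\md{\zeta}$, which is the first assertion.

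For the parenthetical ``regular'' statement, I would track the two distinguished sections $\P([2]_*(\O_X(\lf{\theta}_i)\otimes\psi_{\theta_i}))$, $i=1,2$, of $\lf{\a}_{X,\vtheta}$: under the identification of the previous step they restrict over $\zeta$ to the two tautological sections $\P(\psi_{\theta_1})$ and $\P(\psi_{\theta_2})$ of the $\P^1$-bundle $\P(\psi_{\theta_1}\oplus\psi_{\theta_2})$, i.e. to the singular middle azimuths $[1,0]$ and $[0,1]$. Since a regular azimuthal scheme is by definition one avoiding both of these sections, while a regular middle azimuth is by definition one of the form $[a,b]$ with $a,b\neq 0$ --- equivalently, one avoiding $[1,0]$ and $[0,1]$ --- the two notions of regularity agree over interior schemes, and the parenthetical claim follows.

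I do not expect a real obstacle; the only point needing care rather than effort is the compatibility of identifications: that the canonical isomorphism $\lf{\tilde\a}_{X,\vtheta}|_\zeta\simeq\psi_{\theta_1}\oplus\psi_{\theta_2}$ coming from Lemma \ref{azim-bundle-lem} agrees --- up to the scalars that $\P(-)$ forgets --- with the one implicit in the definition of middle azimuths in \cite{canonodal}, and in particular that the singular section attached to $\theta_i$ matches $[1,0]$ (resp. $[0,1]$) correctly. This is a purely local check in the coordinates $x_1,x_2$ near $\vtheta$ of \S\ref{mod-hilb-sec}, using --- as in the proof of Lemma \ref{azim-bundle-lem} --- that the $i$-th component map $\psi_{\theta_i}\to\check N$ drops rank exactly along $[2]_*(\lf{\theta}_i)$.
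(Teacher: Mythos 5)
Your proposal is correct and follows essentially the same route as the paper: the paper's entire justification for this corollary is the sentence immediately preceding it, namely that the fibre of $\lf{\tilde\a}_{X,\vtheta}$ over an interior scheme is naturally isomorphic to $\psi_{\theta_1}\oplus\psi_{\theta_2}$ (because $\O(\lf{\theta}_i)$ is canonically trivial away from $\lf{\theta}_i$), whose projectivization is by definition the space of middle azimuths, with the two distinguished singular sections matching $[1,0]$ and $[0,1]$. Your additional remark that any residual scalar ambiguity is killed by $\P(-)$ is the right observation and disposes of the only delicate point.
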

We need to work out the relevant normal bundles on the
azimuthal Hilbert scheme $X\sbc 2._B(\vtheta)$.
We denote by $\lf{D}(\vtheta), \rt{D}(\vtheta)$, respectively, the
projectivized normal bundles of $(\lx)\sbr 2., (\rx)\sbr 2.$ in $X\sbr 2._B$,
which coincide with the exceptional divisor of the respective blow up.
By Lemma \ref{azim-bundle-lem}, we can  identify
\eqspl{}{
\lf{D}(\vtheta)\simeq\P_{(\lf{X})\sbr 2._D}(\lf{\theta}_1\sbr 2.\otimes\psi_{\theta_1}
\oplus \lf{\theta}_2\sbr 2.\otimes\psi_{\theta_2})
} where we use the abbreviation $\theta\sbr 2.$ for $[2]_*\theta$.
So this is a split $\P^1$ bundle with a simple intersection theory.
Ditto for the other side. \par
We denote by $\lf{D}^\dag(\vtheta), \rt{D}^\dag(\vtheta)$ the respective inverse images of
$(\lf{X})\sbr 2._D, (\rt{X})\sbr 2._D$ on $X\sbc 2._B$, both Cartier divisors and admitting a natural map, respectively,  to  $_*D(\vtheta), *=L,R$.
Moreover,  $\lf{D}^\dag(\vtheta)$ is the projectivized normal bundle of the inverse
image of $(\lx)\sbr 2._D$ on  the blowup of $(\rx)\sbr 2._D$. This inverse image
is just the blowup of $(\lx)\sbr 2._D$ in the section $\lf{\vtheta}=\lf{\theta_1}+\lf{\theta_2}$. By transversality, the normal bundle of the inverse image is just the
pullback of the normal bundle. Therefore,
\eqspl{D-vtheta-eq}{
\lf{D}^\dag(\vtheta)\simeq\P_{B_\vtheta(\lf{X})\sbr 2._D}(\lf{\theta}_1\sbr 2.\otimes\psi_{\theta_1}
\oplus \lf{\theta}_2\sbr 2.\otimes\psi_{\theta_2})
} and therefore, with respect to this identification,
\eqspl{conormal-D-vtheta-eq}{
\O_{\lf{D}^\dag(\vtheta)}(-\lf{D}^\dag(\vtheta))=
\O(1).
}
We call \emph{interior} points of ${X}\sbr 2._B$,  the points
corresponding to schemes
disjoint from $\vtheta$. Similarly for spaces equipped with
a map to $X\sbr 2._B$, such as its subschemes, e.g. $\rt{X}\sbr 2.$.
\par
\newsubsection{Modifying Brill-Noether}\lbl{bisep-modifying-bn-sec}
We will construct a modification of the Hodge bundle on the
azimuthal Hilbert scheme $X\sbc 2._B(\vtheta)$.
Essentially, there are two independent parts to
this modification,
for the left and right
sides. \par
Let $\bE$ be the pullback of the Hodge bundle on
the azimuthal Hilbert scheme $X\sbc 2._B(\vtheta)$.
Over $\del:=\del_\vtheta$, we have exact
 \[\exseq{\pi_*(\omega_{\rt{X}})}{\bE_{\del_\vtheta}}
 {\pi_*(\omega_{\lf{X}}
(\lf{\theta_1}+\lf{\theta_2}))}.\]
Let $\lf{\bE}, \lf{\bE}^0$ denote the respective pullbacks of ${\pi_*(\omega_{\lf{X}}
(\lf{\theta_1}+\lf{\theta_2}))}, {\pi_*(\omega_{\lf{X}}}$ on
$\lf{D^\dag}=\lf{D^\dag(\vtheta)}\subset X\sbc 2._B(\vtheta)$, i.e. the pullback of $\lf{X}\sbr 2._{\del}$.
Similarly for $\rt{\bE}$.
Then over $X\sbc 2._B(\vtheta)$, we get exact
 \[\exseq{\bE^{-1,0}}{\bE}{ \lf{\bE}}\]
  \[\exseq{\bE^{0,-1}}{\bE}{ \rt{\bE}}\]
  (these define $\bE^{.,.}$), and also
  \eqspl{lfE-bisep}{
  \exseq{\rt{\bE}^0}{\bE_{\del_\vtheta}}{\lf{\bE}},\\
    \exseq{\lf{\bE}^0}{\bE_{\del_\vtheta}}{\rt{\bE}}
  .} Hence we get a (clearly transverse) pair of echelon data (with $D=\delta$)
\eqspl{echelon-data-sep-binode}
{\lf{\chi}(\vtheta)=(\bE^{-1,0}(\vtheta), \lf{D^\dag}),
\rt{\chi}(\vtheta)=(\bE^{0,-1}(\vtheta), \rt{D^\dag}).
}
Note that the pullback of $\be^{-1,0}$ on $X\sbc 2._{B(\vtheta)}$ coincides with the pullback
from $B(\vtheta)$ of 
$\pi_*(\omega(-\lf{X}))$. In this way
this $\be^{-1,0}$ is similar to the analogously-denoted
bundle in the separating node case, see \S \ref{bn-sep}.
These give rise to an echelon modification $\be_\vtheta$,
through which
 the Brill-Noether map factors  yielding a
map that we call the \emph{modified} Brill-Noether map with respect
to $\vtheta$ \eqspl{}{ \phi_{\vtheta}:\bE_{\vtheta} \to
\Lambda_2(\omega).
 }\par
\begin{rem}
Rather than take as our starting point the Hodge bundle
itself, we could take its modification $\be_\Theta$ with
respect to any collection of seps (see \S \ref{multi-sep-sec}). More general cases will be considered below (e.g. \S \ref{multi-bisep-sec}).
\end{rem}
 Next we develop a convenient local normal matrix form for
 the \bn map and
 its modification $\phi_\vtheta$ and derive an important dimension
 count for the degeneracy locus of $\phi_\vtheta$.
 As $\phi_\vtheta$ is a map of vector bundles, its
 degeneracy locus can be studied fibrewise, i.e. the
 fibre of the degeneracy locus over $0\in B$ coincides with
 the degeneracy locus of the restriction of $\phi_\vtheta$
 over $X_0\sbc 2.$  We work
 on a fixed fibre over $\del_\vtheta$, $X_0=\lx\cup\rx$. 
We will
work near $\vtheta_0:=\vtheta\cap X_0$
and use the obvious local basis for $\Lambda_2$
with values $(1,0), (0,1)$ at $(\theta_1, \theta_2)$.
 \par \ul{Case 1:} $\vtheta_0$ is (bilaterally) hyperelliptic on
 $X_0$.\par
 Then for each
$i>0$ there is a section $s_{-i}$ vanishing on $\rx$ and having
local form $(x_1^i, x_2^i)$ on $\lx$ near
$(\lf{\theta}_{01},\lf{\theta}_{02})$ (recall that $(\lx,
\lf{\theta}_{01}+\lf{\theta}_{02})$ is hyperelliptic). Similarly on the
right, with local coordinates $y_i$. Then the \bn matrix takes the
form
\[\left [
\begin{matrix}
...&x_1^2&x_1&1&y_1&y_1^2&...\\
...&x_2^2&x_2&1&y_2&y_2^2&...
\end{matrix}
 \right ]\]
 where $X_0$ is locally defined by $x_1y_1=x_2y_2=0$,
 Now the azimuthal Hilbert scheme of $X_0$ is a $\P^1\times\P^1$
 -bundle over the Hilbert scheme,
 locally covered by 4 open affines \mbox{$\A^1\times\A^1$}
  which may be labelled
 (finite, finite) etc.
 On the (finite, finite) part of the azimuthal Hilbert scheme
 $X_0\sbc 2.(\vtheta)$, where $u,v$ are regular, the matrix
 $\Lambda_{\vtheta}$ for $\phi_\vtheta$  takes the form
 \eqspl{modif-bn-matrix-hypell}{\left [
\begin{matrix}
...&x_1&1&1&1&y_1&...\\
...&x_1u^2&u&1&v&y_1v^2&...
\end{matrix}
 \right ]}
 which drops rank
on the codimension-2 locus $u=v=1$. Note that this open set only meets $\lf{D^\dag}$, defined by $y_1$,
and $\rt{D^\dag}$, defined by $x_1$; it misses the
'mixed' part of the (azimuthal) Hilbert scheme,
and misses as well the locus of singular azimuths $uv=0$.
In $\lf{D^\dag}$,  $u=1$ defines
the graph of the hyperelliptic involution while $v=1$ is the
hyperelliptic right azimuth, i.e.  the section of the $\P^1$-bundle
parametrizing azimuths, corresponding to the
hyperelliptic azimuth on $\rt{\vtheta}$.
In $\lf{D}^\dag$, $v=1$ corresponds to the condition
on sections of $\omega_{\lf{X}(\vtheta)}(2\lf{\vtheta})$
that they have the value at $\lf{\theta}$ dictated by
the (left) hyperelliptic azimuthal condition.
 Similarly for
 $\rt{D^\dag}$. The other open sets covering the azimuthal Hilbert scheme are handled similarly. We conclude that
overall in this case, the fibre over 0 of the degeneracy locus of
$\Lambda_{\vtheta}$  has
 the form $x_1y_1=0$, i.e. is itself a nodal curve.
\par
\ul{Case 2:} $\vtheta_0$ not bilaterally hyperelliptic on
$X_0$.\par
We may assume that $(\rx, \rt{\theta}_1+\rt{\theta}_2)$ is non-
 hyperelliptic as a pair. Then in the above calculation the basis on the right side can be taken
 of the form $(y_1, y_2^2), (y_1^2, y_2),...$ and the corresponding
 modified \bn matrix will take the form
  \eqspl{modif-bn-matrix-rt-non-he}{\left [
\begin{matrix}
...&x_1&1&1&1&y_1&...\\
...&x_1u^2&u&1&v^2y_1&v&...
\end{matrix}
 \right ]}
 which has maximal rank near $y_1=0$. Off the locus $y_1=0$,
 the right half of the matrix is equivalent to the Brill-Noether
 matrix of $\rx$ itself, hence has maximal rank. Therefore,
 $\phi_\vtheta$ has maximal rank everywhere.
 \par
 We can summarize the foregoing discussion as follows.
 \begin{prop}
 Let $\vtheta$ be a properly separating binode of $X/B$ with
 associated boundary locus $\del=\del_\vtheta$ and modified
 \bn map $\phi_\vtheta$. Then for any fibre $X_0$ in the interior $\del^0$ of
 $\del$, i.e. having no seps or biseps off $\vtheta$, the intersection
 $\bD_2(\phi_\vtheta)\cap X\sbc 2._0$ is 1-dimensional if
 $X_0$ is
 hyperelliptic and empty otherwise. When $X_0$ is hyperellptic, the intersection consists of the loci of hyperelliptic
 subschemes on each side of $\vtheta$, endowed with
 the hyperelliptic azimuth at $\vtheta$.
 \end{prop}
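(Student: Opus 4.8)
The plan is to exploit that $\phi_\vtheta:\bE_\vtheta\to\Lambda_2(\omega)$ is a morphism of vector bundles over $X\sbc 2._B(\vtheta)$, so that for any $0\in\del^0$ the fibre $\bD_2(\phi_\vtheta)\cap X\sbc 2._0$ is exactly the rank\nobreakdash-$\le 1$ locus of the restricted map $\phi_\vtheta|_{X\sbc 2._0}$; it can therefore be studied entirely on the single azimuthal Hilbert scheme $X\sbc 2._0$ of the fibre $X_0=\lx\cup_\vtheta\rx$. First I would record two preliminaries. (a) Every local normal form for $\phi_\vtheta$ computed in \S \ref{bisep-modifying-bn-sec} contains the column coming from the section $s_0=1$, whose value is $(1,1)$; hence $\phi_\vtheta$ has rank $\ge 1$ everywhere on $X\sbc 2._0$, and $\bD_2(\phi_\vtheta)\cap X\sbc 2._0$ is precisely the locus where the rank equals $1$. (b) Because $X_0$ lies over the interior $\del^0$, its only $*$-sep is $\vtheta$, so $\lx$ and $\rx$ are inseparable and $2$-inseparable relative to the bimarks $\lf{\vtheta}$, $\rt{\vtheta}$ (any further sep or bisep of a side would produce a sep or bisep of $X_0$ distinct from $\vtheta$), and $X_0$ is hyperelliptic if and only if $\vtheta$ is bilaterally hyperelliptic on $X_0$. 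This reduces matters to Case 1 and Case 2 of the discussion preceding the Proposition.

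Next I would stratify $X\sbc 2._0$ into the pieces on which $\phi_\vtheta$ admits a controllable normal form: the interiors of the Cartier divisors $\lf{D}^\dag$ and $\rt{D}^\dag$ (length\nobreakdash-$2$ schemes supported on one side, carrying an azimuth at $\vtheta$, by Corollary \ref{interior-azimuthal-cor}), the "mixed" part (one point on each side), and the locus of schemes supported at $\vtheta$. For an interior scheme on $\lx$ disjoint from $\vtheta$, the left comparison diagram of the type \eqref{interp-1node}–\eqref{multisep-comp-diag-eq}, together with Proposition \ref{sepcanonical-prop}, identifies the image of $\phi_\vtheta$ over it with the image of the ordinary \bn map of the $\vtheta$-sepcanonical system of $X_0$ restricted to $\lx$, i.e. of the appropriate azimuthally constrained subsystem of $|\omega_{\lx}(2\lf{\vtheta})|$; by Lemma \ref{omega+2-insep-lem} and Lemma \ref{hypell-twist-lem} that subsystem is essentially very ample unless $(\lx,\lf{\vtheta})$ is hyperelliptic, the scheme is a hyperelliptic divisor, and the azimuth is the hyperelliptic one, in which case it degenerates exactly along that one-dimensional locus; symmetrically for $\rx$. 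For the mixed part, Corollary \ref{separated-are-indep} shows that a pair of smooth points lying one on each side is already separated by $|\omega_{X_0}|$, so $\phi$, a fortiori $\phi_\vtheta$, has rank $2$ there. For schemes supported at $\vtheta$ I would simply invoke the explicit matrices already obtained: \eqref{modif-bn-matrix-hypell} in the bilaterally hyperelliptic case, whose rank\nobreakdash-$1$ locus on the $(u,v)$-chart is cut out by $u=v=1$ (with the analogous statement on the other three charts covering the neighbourhood of $\vtheta$), and \eqref{modif-bn-matrix-rt-non-he} otherwise, which has maximal rank.

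Assembling the strata: in Case 2 ($\vtheta$ not bilaterally hyperelliptic, equivalently $X_0$ not hyperelliptic) every stratum yields rank $2$, so $\bD_2(\phi_\vtheta)\cap X\sbc 2._0=\emptyset$. In Case 1 the rank\nobreakdash-$1$ locus is the union of the lift to $\lf{D}^\dag$ of the $\P^1$ of hyperelliptic divisors on $\lx$ carrying the hyperelliptic azimuth at $\vtheta$ (the locus $u=1$, $v=1$ in the notation of \eqref{modif-bn-matrix-hypell}), the symmetric $\P^1$ inside $\rt{D}^\dag$, and the schemes supported at $\vtheta$ with $u=v=1$; glued along $\vtheta$ this is the nodal curve $x_1y_1=0$ of the local computation, connected and of pure dimension $1$, with the two branches being exactly the loci of hyperelliptic subschemes on the two sides endowed with the hyperelliptic azimuth — which is the assertion of the Proposition. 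The step requiring genuine care, and the main obstacle, is this last one: correctly matching the four coordinate charts around $\vtheta$, verifying that the conditions "$u=1$" (graph of the hyperelliptic involution on $\lx$) and "$v=1$" (hyperelliptic azimuthal section) inside $\lf{D}^\dag$, and their right-hand analogues inside $\rt{D}^\dag$, both specialize to the single point $u=v=1$ over $\vtheta$, and checking that no spurious component of the rank\nobreakdash-$1$ locus is hiding in the singular-azimuth loci $uv=0$ or in the mixed part — i.e. that the degeneracy locus is exactly, not merely contains, the claimed hyperelliptic loci.
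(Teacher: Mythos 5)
Your argument reaches the correct conclusion, but it is organized rather differently from the paper's proof, which consists essentially of the two local matrix computations \eqref{modif-bn-matrix-hypell} and \eqref{modif-bn-matrix-rt-non-he} in a neighbourhood of $\vtheta$: the adapted bases $(x_1^i,x_2^i)$, resp.\ $(y_1,y_2^2),(y_1^2,y_2),\dots$, are chosen precisely so that the ideal of $2\times2$ minors can be read off as $(u-1,v-1)$ in the bilaterally hyperelliptic case and the unit ideal otherwise, the remaining charts and the region away from $\vtheta$ being dispatched by ``handled similarly'' and, in effect, by the very-ampleness analysis that the paper only carries out in the following subsection \S\ref{single-binode-interp-sec}. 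You instead stratify $X\sbc 2._0$ and import that later analysis up front -- the comparison diagram identifying $\phi_\vtheta$ over the interior of $\lf{D}^\dag$ with the Brill--Noether map of the (possibly azimuthally constrained) subsystem of $|\omega_{\lx}(2\lf{\vtheta})|$, plus the essential-very-ampleness lemmas -- reserving the matrices for schemes supported at $\vtheta$. This buys an honestly global statement on the whole fibre, which the literal proof in the paper only exhibits near $\vtheta$; the cost is an earlier reliance on the sepcanonical machinery. Two details in your version need repair, though neither affects the conclusion. First, for the mixed stratum you invoke the corollary on separated pairs, but $\lx$ and $\rx$ are \emph{not} distinct inseparable components of $X_0$ (neither $\theta_1$ nor $\theta_2$ separates), so that corollary does not apply; you need the short direct argument that $h^0(\O_{X_0}(p+q))=1$ for smooth $p,q$ on opposite sides of the bisep when $g\ge 2$ (a function with polar divisor $\le p+q$ restricts to a function with at most one simple pole on each side, forcing both sides rational and $g=1$), or else read off maximal rank from the explicit matrix in the chart that actually contains the mixed schemes. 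Second, on the interior of $\lf{D}^\dag$ the azimuthal constraint on $|\omega_{\lx}(2\lf{\vtheta})|$ is present only when $(\rx,\rt{\vtheta})$ is hyperelliptic; when it is not, the system cut out on $\lx$ is the full $|\omega_{\lx}(2\lf{\theta_1}+2\lf{\theta_2})|$, which is very ample simply because the twist has degree $4\ge 3$ -- the degree-two-twist lemma and the azimuthal-twist lemma are not the statements doing the work in that subcase.
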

It follows from the Proposition that, assuming $X/B$ is versal, the codimension of
$\bD_2(\phi_{\vtheta})\cap  \pi\inv(\del)$
 in $B_{\vtheta}X\sbr 2.$ is at least
 \[(_Lg-1)+(_Rg-1)+2+1=g\]
 (codimensions of hyperelliptic 2-pointed curve loci
  on left and right plus
 codimension of binodal locus plus fibre codimension). This number
 is greater than the expected codimension of the degeneracy locus
  $\bD_2(\phi_\vtheta)$ of $\phi_\vtheta$, which
 equals $g-1$.  Therefore this boundary locus
 cannot contribute a component
 to $\bD_2(\phi)$.  Thus
 \begin{cor} We have, schematically,
 \eqspl{}{\bD_2(\phi_{\vtheta})\cap
  \pi\inv(B^0\cup\del^0)=
  \overline{\bD_2(\phi_\vtheta)\cap\pi\inv( B^0)}\cap
   \pi\inv(B^0\cup\del^0)
  =\overline{\bD_2(\phi)\cap \pi\inv(B^0)}\cap  \pi\inv(B^0\cup\del^0)
  } where $B^0\subset B$ is the interior, parametrizing smooth curves.
In particular, every point of\nl \mbox{$\bD_2(\phi_{\vtheta})\cap
  \pi\inv(\del^0)$} is a specialization of a hyperelliptic divisor on a smooth hyperelliptic curve and conversely.
\end{cor}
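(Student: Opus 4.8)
The plan is to read the statement off the two facts just recorded: the fibrewise description of $\bD_2(\phi_\vtheta)$ over $\del^0$ in the Proposition, and the codimension estimate it entails. First I would observe that the echelon data \eqref{echelon-data-sep-binode} defining $\be_\vtheta$ are supported entirely over $\del_\vtheta$ (the divisors $\lf{D^\dag},\rt{D^\dag}$ lie over $\del$, and $\be^{-1,0},\be^{0,-1}$ differ from $\be$ only over $\del$), and likewise the blowup $X\sbc 2._B(\vtheta)\to X\sbr 2._B$ of $\cJ_\vtheta$ is an isomorphism over $\pi\inv(B^0)$. Hence $\phi_\vtheta$ and $\phi$ have the same degeneracy scheme over $\pi\inv(B^0)$, which by the normal form discussion of \S\ref{smooth-hypell-sec} is $\he^2_{B^0}$, regular of codimension $g-1$ and equal to the locus of hyperelliptic divisors on smooth hyperelliptic curves. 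This pins down the right-hand sides of the two displayed equalities and gives the second equality once the first is established.

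For the first equality, the codimension count preceding the statement (valid by the versality part of the good-family hypothesis) shows that the part of $\bD_2(\phi_\vtheta)$ lying over $\del^0$ has codimension at least $(_Lg-1)+(_Rg-1)+2+1=g$ in $X\sbc 2._B(\vtheta)$, strictly larger than $g-1$. Since $\phi_\vtheta$ maps a rank-$g$ bundle to the rank-$2$ bundle $\Lambda_2(\omega)$, every component of $\bD_2(\phi_\vtheta)$ has codimension $\le g-1$, so no component can be supported over $\del^0$. Therefore every component of $\bD_2(\phi_\vtheta)$ meeting $\pi\inv(\del^0)$ already meets $\pi\inv(B^0)$, i.e. is the closure of a component of $\he^2_{B^0}$; conversely $\overline{\he^2_{B^0}}$ is closed and contained in $\bD_2(\phi_\vtheta)$. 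This proves the first equality on underlying sets, and with it the ``conversely'' half of the final assertion is automatic.

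The step I expect to be the real work is upgrading this to the asserted \emph{schematic} equality, i.e. showing $\bD_2(\phi_\vtheta)$ has no embedded components along $\pi\inv(\del^0)$ and that its scheme structure there is that of $\overline{\he^2_{B^0}}$. For this I would use the explicit local matrices for $\Lambda_\vtheta$ obtained above, now in their family form (incorporating the deformation parameters as in \S\ref{sep-local-forms-sec}): in the bilaterally hyperelliptic case \eqref{modif-bn-matrix-hypell} the ideal of maximal minors, on each of the four standard affine charts of the azimuthal Hilbert scheme, is generated by equations cutting out a reduced (indeed smooth) complete intersection of codimension $g-1$ — e.g. the locus $u=1,\ v=1$ together with the hyperelliptic equations on the two sides — which one identifies with the ideal of the proper transform of $\he^2_{B^0}$ by an argument parallel to Proposition \ref{single-sep-prop} and \S\ref{mod-bn-sep-interpret}; in the case where $\vtheta$ is not bilaterally hyperelliptic, \eqref{modif-bn-matrix-rt-non-he} shows the ideal is the unit ideal. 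Patching these local computations gives reducedness near $\pi\inv(\del^0)$ and the scheme-theoretic identification with $\overline{\he^2_{B^0}}$, completing the first equality. The ``in particular'' clause then follows: a point over $\del^0$ lies in $\bD_2(\phi_\vtheta)$ iff it lies in $\overline{\he^2_{B^0}}$, hence iff it is a specialization of a hyperelliptic divisor on a smooth hyperelliptic curve, and the Proposition identifies precisely these limit points as the hyperelliptic azimuthal subschemes on the two sides of $\vtheta$.
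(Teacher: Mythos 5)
Your proposal is correct and follows the paper's own argument: the paper derives this corollary precisely from the codimension count you reproduce ($(_Lg-1)+(_Rg-1)+2+1=g$ over $\del^0$ versus the determinantal bound $g-1$ on every component), concluding that the boundary locus cannot contribute a component. Your extra paragraph on the schematic refinement is a reasonable elaboration of a point the paper treats as immediate (one can also invoke Cohen–Macaulayness of a degeneracy scheme of expected codimension to rule out embedded components), but it does not change the route.
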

\begin{rem}\lbl{bisep+all-seps-rem}
It is worth noting provisionally
at this point that while the above construction took as its starting point the
'original' Hodge bundle and \bn map, we could just as well
have  started with the modifications $\be_\Theta, \phi_\Theta$, where $\Theta$ is the set of all seps of $X_0$, as in
Proposition \ref{all-seps-prop}. More general results will be
given below.
\end{rem}

\newsubsection{Interpretation}\lbl{single-binode-interp-sec}
As in the case of a separating node (see \ref{mod-bn-sep-interpret}),
 the azimuthal modification of Brill-Noether can be interpreted
in terms of reviving sections of the canonical that vanish
identically to one side of the binode. To do so, we must have an
appropriate \myit{twisting divisor} on the total space $X$, as the
sides in question are defined in codimension 2. This is accomplished
by an azimuthal modification of the original family, as in
\S\ref{azimuthal-curves-sec}.\par Thus, fixing notations as above,
let $X_{ B(\vtheta), \rL}/B(\vtheta)$ be the left azimuthal
modification of $X/B$, which is a nodal family over
$B(\vtheta)=\Bl_{\del_\vtheta}B$, and is endowed with  left and
right boundary families $\lf{\tilde X}_P, \rt{\tilde{X}}_P$ over the exceptional divisor
$P=P(\vtheta)\subset B(\vtheta)$, which as we recall parametrizes middle
azimuths $\md{\zeta}$ at $\vtheta$. Here the right boundary family
 projects isomorphically:
\[\rt{\tilde{X}}_P\simeq\rt{X}_P= \rt{X_{{\del}}}\times_{\del}P\]
while the left one $\lf{\tilde{X}}_P$ is the pullback  of
$\lf{X}(\vtheta)\subset X$
 and isomorphic to the blowup of
$\lf{X_{{\del}}}\times_{\del}P$ in $\vtheta_P=\vtheta\times_{\del}
P$. Over the complement of the locus of singular azimuths, i.e. the
2 distinguished sections of $P/\del$, the modified family ${X}_{
B(\vtheta), \rL}$ coincides with $X_{B(\vtheta)}$.\par
 We consider
the  relative Hilbert scheme for this modified family, viz. $(X_{B(\vtheta), \rL})\sbr 2._{B(\vtheta)}$
(note the outer $B(\vtheta)$ subscript indicates that this is the \emph{relative}
Hilbert scheme). This
comes equipped with a map to the symmetric product
$(X_{\rL, B(\vtheta)})\spr 2._{B(\vtheta)}$, hence to $X\spr 2._{B(\vtheta)}$.
Therefore we get a correspondence diagram
\eqspl{extended-azimuthal-eq}{
\mydiamond{Y:=X\sbc 2._{B(\vtheta)}\times_0 (X_{\rL, B(\vtheta)})\sbr 2._{B(\vtheta)}}
{X\sbc 2._{B(\vtheta)}}{(X_{\rL, B(\vtheta)})\sbr 2._{B(\vtheta)}}
{X_{B(\vtheta)}\spr 2.} } where $\times_0$ denotes the unique
dominant component (over either factor) of the fibre product over
$X_{B(\vtheta)}\spr 2.$. Now up on $Y$, we have all the sheaves we
need to work as in \eqref{interp-1node}. Thus, suppressing
various pullbacks, we have a map
\[\Lambda_2(\omega(-\lx_P))\to\Lambda_2(\omega)\] which vanishes on
$\lf{D^\dag}$, the inverse image of $\lf{X}_P\sbr 2.$, and we get, as in
the case of separating nodes (see \eqref {interp-1node}), a
\emph{comparison diagram} (where we recall that $\be^{-1,0}$ may be
identified as the pullback of $\pi_*(\omega(-\lx_P))$ from $X\times
_BB(\vtheta)$, hence also as the pullback of
$\pi_*(\omega(-\lf{\tilde X}_P))$ from $X_{
B(\vtheta), \rL}/B(\vtheta)$): \eqspl{interp-binode}{
\mycd{\be^{-1,0}(\lf{D^\dag})}{\be_\vtheta}
{\Lambda_2(\omega(-\lx_P))(\lf{D^\dag})} {\Lambda_2(\omega).} } Here the
horizontal maps are isomorphisms over the interior of $\lf{D^\dag}$, i.e.
the pullback of $\lf{X}\sbr 2._B\setminus[2]_*(\vtheta)$.
Moreover, the right column, i.e. $\phi_\vtheta$, is already
defined over $X\sbc 2._B(\vtheta)$ and does not explicitly depend on a choice
of middle azimuth. As for the left column, which does involve an explicit
choice of middle azimuth,
 it follows
from \eqref{normal-bundle-azimuth} that, at least over a fibre $X_0$
corresponding to a regular (middle) azimuth $\md{\zeta}$,
$\omega(-\lf{X_0})$ coincides with
$(\omega_{\lf{X_0}}(2\lf{\vtheta})\otimes\md{\zeta})\cup\omega_{\rt{X_0}}$
(we note that, being regular, $\md{\zeta}$ is naturally isomorphic by projection to both $\psi_1$ and $\psi_2$) .
\par
An interior point $z$ of $\lf{D}^\dag$ consists of an underlying subscheme $z_0$ of $\lf{X}(\vtheta)$ plus
a middle azimuth $\md{\zeta}$ at $\vtheta$,
and the latter determines
a gluing of
$\omega_{\lf{X_0}}(2\lf{\vtheta})$
and 
$\omega_{\rt{X_0}}$
at $\theta_1$ and $ \theta_2$, up to a common scalar.
Then given a right azimuth $\rt{\zeta}$ at $\vtheta$, e.g. a hyperelliptic one, it determines a left azimuth
$\lf{\zeta}=\md{\zeta}\rt{\zeta}\inv$ at $\vtheta$
which varies with $\md{\zeta}$.
If sections of $\omega_{\rt{X_0}}$ all satisfy the azimuthal
condition $\rt{\zeta}$ (as in the hyperelliptic case),
then as $z$ varies fixing the
underlying scheme, the left column of \eqref{interp-binode}
corresponds to subsystems of $\omega_{\lf{X_0}(\vtheta)}(2\lf{\vtheta})$ determined by
a varying azimuthal condition at $\lf{\vtheta}$.
Therefore, over regular
azimuthal subschemes $z$ in the interior of $\lf{D^\dag}$, the image of
the right column, i.e.  of the modified \bn $\phi_{\vtheta}$,
coincides with that of the \bn map associated to the sepcanonical
system $|\omega_{X_0}|^\sep$ restricted on $\lf{X_0}$, i.e. sections
satisfying the residue condition (automatic in this case) and,
when $(\rt{X_0}(\vtheta), \rt{\vtheta})$ is hyperelliptic, the
azimuthal condition
$\lf{\zeta}=\md{\zeta}\rt{\zeta}\inv$ . By direct inspection, considering the
matrix $\Lambda_\vtheta$ above, the same also holds at the boundary
of $\lf{D^\dag}$, i.e. the schemes meeting $\lf{\vtheta}$, and for the
singular azimuths, i.e. those corresponding to replacing the twist
$2\lf{\vtheta}$ by $\lf{\vtheta}+\lf{\theta_i}, i=1,2$. Of course,
$\phi_{\vtheta}$ is left-right symmetric as well. Thus:
\begin{lem}\lbl{mod-bn-sepcan-1binode}
For subschemes entirely on one side of $\vtheta$, the image of
the modified Brill-Noether map
coincides with that of the Brill-Noether map
associated to the $\vtheta$-sepcanonical system of $X$.\par

\end{lem}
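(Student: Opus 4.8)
The plan is to read off the statement from the left comparison diagram \eqref{interp-binode}, combined with the description of the sepcanonical system as a section-extension problem (\cite{canonodal}, Proposition~6.10 and the results around it) and the local normal matrices \eqref{modif-bn-matrix-hypell} and \eqref{modif-bn-matrix-rt-non-he} for $\phi_\vtheta$. Since $X\sbc 2._B(\vtheta)$, the map $\phi_\vtheta$, and the $\vtheta$-sepcanonical system are all symmetric under interchanging the two sides of $\vtheta$, it is enough to treat subschemes lying on the left side $\lf{X}(\vtheta)$, i.e.\ points $z$ of the divisor $\lf{D^\dag}=\lf{D^\dag}(\vtheta)\subset X\sbc 2._B(\vtheta)$. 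By Corollary~\ref{interior-azimuthal-cor}, such a $z$ consists of an underlying length-$2$ subscheme $z_0\subset\lf{X_0}$ of a fibre $X_0=\lf{X_0}\cup\rt{X_0}$ over $\del=\del_\vtheta$, together with a middle azimuth $\md{\zeta}$ at $\vtheta$; the $\vtheta$-sepcanonical system in the statement is understood relative to this middle-azimuthal structure.

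First I would pull the relevant sheaves back to the fibre product $Y$ of \eqref{extended-azimuthal-eq} and invoke \eqref{interp-binode}. Its bottom-left entry is the ordinary degree-$2$ Brill--Noether map of the line bundle $\omega(-\lf{X}_P)$ on the left azimuthal modification $X_{\mathrm L,B(\vtheta)}$, and the horizontal arrows of \eqref{interp-binode} are isomorphisms over the interior of $\lf{D^\dag}$; hence over a subscheme $z_0$ disjoint from $\lf{\vtheta}$ the image of $\phi_\vtheta$ agrees, under the canonical identification of the two rank-$2$ targets, with the image of that twisted Brill--Noether map. By \eqref{normal-bundle-azimuth} and the analysis of regular azimuthal fibres, $\omega(-\lf{X_0})$ restricts on $\lf{X_0}$ to $\omega_{\lf{X_0}}(2\lf{\vtheta})\otimes\md{\zeta}$ and on $\rt{X_0}$ to $\omega_{\rt{X_0}}$, glued at $\theta_1,\theta_2$ by data that $\md{\zeta}$ pins down up to a common scalar. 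So over $z_0$ the image of $\phi_\vtheta$ is the image of the evaluation-at-$z_0$ map out of the subspace of $H^0(\omega_{\lf{X_0}}(2\lf{\vtheta}))$ consisting of restrictions of global sections of $\omega(-\lf{X_0})$.

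The second step would be to identify that subspace with the $\vtheta$-sepcanonical system on $\lf{X_0}$. Applying the extension criterion (\cite{canonodal}, Proposition~6.10) to the left azimuthal family over $B(\vtheta)$ near the point of $P(\vtheta)$ given by $\md{\zeta}$, a section of $\omega_{\lf{X_0}}(2\lf{\vtheta})$ extends over the entire fibre exactly when it satisfies the residue relation $\Res_{\lf{\theta_1}}(\alpha)+\Res_{\lf{\theta_2}}(\alpha)=0$ together with, in the case $(\rt{X_0},\rt{\vtheta})$ is hyperelliptic, the azimuthal condition at $\lf{\vtheta}$ determined by the left azimuth compatible with $\md{\zeta}$ and the hyperelliptic right azimuth (\cite{canonodal}, Definition~6.1); by construction of the sepcanonical system this is precisely its azimuthal constraint at a right-hyperelliptic bisep, whereas if $(\rt{X_0},\rt{\vtheta})$ is non-hyperelliptic there is no constraint on either side. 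This proves the lemma for $z$ in the interior of $\lf{D^\dag}$.

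It remains to dispose of two boundary situations by direct inspection of the normal matrix $\Lambda_\vtheta$. For $z_0\subset\lf{X_0}$ meeting $\lf{\vtheta}$, one factor of the twist $2\lf{\vtheta}$ gets absorbed into the value at the support of $z_0$, and the shapes of \eqref{modif-bn-matrix-hypell} and \eqref{modif-bn-matrix-rt-non-he} display the image of $\phi_\vtheta$ over $z$ as the corresponding degenerate sepcanonical image; for a singular azimuth the twist $2\lf{\vtheta}$ degenerates to $\lf{\vtheta}+\lf{\theta_i}$, and the same matrices again exhibit the expected image, with \eqref{modif-bn-matrix-rt-non-he} showing that $\phi_\vtheta$ has maximal rank whenever $\vtheta$ is not right-hyperelliptic, in accordance with the absence of an azimuthal constraint there. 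The hard part, I expect, is the bookkeeping of the second step: reconciling the azimuthal condition coming out of the residue/extension calculus on $X_{\mathrm L,B(\vtheta)}$ with the one written into the definition of the sepcanonical system, keeping straight the three azimuths $\lf{\zeta},\md{\zeta},\rt{\zeta}$ and the identifications $\check N_{\lf{X}(\vtheta)/X_{\mathrm L,B(\vtheta)}}|_{\lf{\theta_i}}=\rt{\psi}_i$ against $\pi^*(\O_{P(\vtheta)}(1))\simeq\md{\zeta}$; once this is in hand the global statement follows, since $\phi_\vtheta$ is already defined on $X\sbc 2._B(\vtheta)$ and a point of $\lf{D^\dag}$ already records the middle azimuth needed to pin down the sepcanonical system.
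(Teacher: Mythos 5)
Your proposal is correct and follows essentially the same route as the paper: the comparison diagram \eqref{interp-binode} over the correspondence space of \eqref{extended-azimuthal-eq}, the identification via \eqref{normal-bundle-azimuth} of $\omega(-\lf{X_0})$ with $(\omega_{\lf{X_0}}(2\lf{\vtheta})\otimes\md{\zeta})\cup\omega_{\rt{X_0}}$, the matching of the induced azimuthal/residue constraints with those defining the sepcanonical system, and direct inspection of the local matrices for schemes meeting $\lf{\vtheta}$ and for singular azimuths. The only cosmetic difference is that you route the identification of the extendable sections through the extension criterion of \cite{canonodal} explicitly, where the paper argues the same point directly from the azimuthal gluing.
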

Now consider a fibre $X_0$ in the 'interior' of
 $\del_\vtheta$,
in the sense that $(\lf{X_0}(\vtheta), \lf{\vtheta})$
and  $(\rt{X_0}(\vtheta), \rt{\vtheta})$ are
2-inseparable (see \S 3 of \cite{canonodal});
equivalently, $X_0$ has no other seps or biseps besides
$\vtheta$. If $(\rt{X_0}, \rt{\vtheta})$ is not hyperelliptic, clearly $|\omega_{\lf{X_0}(\vtheta)}(2\lf{\vtheta})\cup\omega_{\rt{X}_0}|$
induces on the left side the complete linear system
$|\omega_{\lf{X_0}(\vtheta)}(2\lf{\vtheta})|$, necessarily very ample. Consequently, whenever $X_0$ is not hyperelliptic,
$\phi_\vtheta$ is surjective over $(X_0)\sbc 2.$.\par
On the other hand if $X_0$ is hyperelliptic, a similar
argument shows that $\phi_\vtheta$ drops rank precisely
on the hyperelliptic azimuthal schemes, i.e.
those regular azimuthal schemes of the form
$(z_0, \rt{\zeta})$ where $z_0$ is a hyperelliptic divisor on $\lf{X_0}(\vtheta)$ and $\rt{\zeta}$ is the hyperelliptic azimuth on $\rt{\vtheta}$, or the analogous schemes on the
right. Note $\phi_\vtheta$ has maximal rank at the singular azimuthal schemes as well as the mixed schemes (those on
both sides of $\vtheta$). Thus, if we let $\he\sbc 2._B\subset
X\sbc 2._B$ denote the locus of hyperelliptic azimuthal schemes, we conclude:
\begin{cor}
Over the fibers having no sep or bisep besides $\vtheta$,
the degeneracy locus of $\phi_\vtheta$
 is equal to $\he\sbc 2._B$ and
 coincides with the closure of the smooth hyperelliptic
 locus $\he^2$.
\end{cor}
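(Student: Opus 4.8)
The plan is to assemble the corollary from three ingredients, two of which are essentially already in place in the preceding discussion: the behaviour of $\phi_\vtheta$ over smooth fibres, its behaviour over the interior $\del^0$ of $\del_\vtheta$, and a dimension count forcing the contribution over $\del^0$ into the closure of the contribution over the smooth locus. I would start with the smooth fibres. Over the interior $B^0\subset B$ of smooth curves the ideal $\cJ_\vtheta$ restricts to the unit ideal, so $X\sbc 2._B(\vtheta)$, $\be_\vtheta$ and $\phi_\vtheta$ restrict over $B^0$ to $X\spr 2._{B^0}$, the Hodge bundle $\be$ and the ordinary \bn map $\phi$; by the normal form computation of \S\ref{normal forms} (cf.\ \eqref{bn-rel-he2}) the degeneracy locus of $\phi$ over $B^0$ is exactly $\he^2_{B^0}$, which is also what $\he\sbc 2._B$ restricts to there. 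Hence both equalities in the statement hold over $B^0$.

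Next I would fix a fibre $X_0$ over $\del^0$, so that $(\lf{X_0}(\vtheta),\lf{\vtheta})$ and $(\rt{X_0}(\vtheta),\rt{\vtheta})$ are $2$-inseparable, i.e.\ $X_0$ has no $*$-sep besides $\vtheta$, and stratify $X_0\sbc 2.(\vtheta)$ into the two one-sided pieces (the blowups along $\lf{\vtheta}$, resp.\ $\rt{\vtheta}$, of the one-sided symmetric squares, each further split by regular versus singular azimuth) and the ``mixed'' piece of schemes meeting both sides. On a one-sided scheme the comparison diagram \eqref{interp-binode} together with Lemma~\ref{mod-bn-sepcan-1binode} identifies the image of $\phi_\vtheta$ with that of the \bn map of the $\vtheta$-sepcanonical system of $X_0$ restricted to that side; Lemmas~\ref{higher-twist-very-ample} and~\ref{hypell-twist-lem} (equivalently Proposition~\ref{insep-hel} and Theorem~\ref{sepcanonical-thm}) then give that this restricted system is very ample when $X_0$ is not hyperelliptic, so $\phi_\vtheta$ has maximal rank there, and that when $X_0$ is hyperelliptic it drops rank exactly along the hyperelliptic divisors of that side carrying the hyperelliptic (left, resp.\ right) azimuth at $\vtheta$. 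On the mixed piece and at the singular azimuths I would read maximal rank directly from the local matrices \eqref{modif-bn-matrix-hypell} and \eqref{modif-bn-matrix-rt-non-he}, which also exhibit the $2\times 2$ minor ideal as radical, cutting out precisely the hyperelliptic azimuthal locus. This proves $\bD_2(\phi_\vtheta)=\he\sbc 2._B$ over $\del^0$ as well.

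To glue, note that $\phi_\vtheta$ is a map of vector bundles on $X\sbc 2._B(\vtheta)$, so $\bD_2(\phi_\vtheta)$ is locally the vanishing of the $2\times 2$ minors of a $2\times g$ matrix of functions and every one of its components has codimension at most $g-1$. By the count recorded just above this corollary, $\bD_2(\phi_\vtheta)\cap\pi\inv(\del^0)$ has codimension at least $({}_Lg-1)+({}_Rg-1)+2+1=g$, strictly larger; hence no component of $\bD_2(\phi_\vtheta)\cap\pi\inv(B^0\cup\del^0)$ can lie over $\del^0$, so every such component meets $B^0$. Since $\bD_2(\phi_\vtheta)\cap\pi\inv(B^0)=\he^2_{B^0}$ is irreducible, $\bD_2(\phi_\vtheta)\cap\pi\inv(B^0\cup\del^0)$ is its closure $\overline{\he^2}$, and combining with the two pointwise identifications above we obtain $\bD_2(\phi_\vtheta)=\he\sbc 2._B=\overline{\he^2}$ over all fibres carrying no $*$-sep other than $\vtheta$.

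The hard part is the pointwise analysis over a hyperelliptic fibre $X_0$ above $\del^0$: pinning down that the degeneracy of $\phi_\vtheta$ is \emph{exactly} $\he\sbc 2._B$ there, with no spurious degeneracy at mixed schemes or at singular azimuths and with the surviving azimuth forced to be the hyperelliptic one rather than an arbitrary azimuth. This is precisely where Lemma~\ref{hypell-twist-lem}, which isolates the hyperelliptic azimuth as the unique azimuth destroying essential very ampleness of $\omega_Y(2p+2q)$, is indispensable, together with careful bookkeeping across the affine charts covering $X_0\sbc 2.(\vtheta)$ and the regular/singular azimuth distinction of \S\ref{single-binode-interp-sec}. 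Once that is in hand, the determinantal codimension bound and the dimension count are routine, given the codimensions of the pointed hyperelliptic loci recorded in \S\ref{smooth-hypell-sec}.
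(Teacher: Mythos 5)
Your proposal is correct and follows essentially the same route as the paper: the one-sided/mixed stratification with the sepcanonical comparison of Lemma \ref{mod-bn-sepcan-1binode} and the very-ampleness input, the local matrices \eqref{modif-bn-matrix-hypell} and \eqref{modif-bn-matrix-rt-non-he} for the node-supported and mixed schemes, and the codimension count $({}_Lg-1)+({}_Rg-1)+2+1=g>g-1$ that forces the boundary degeneracy into the closure of $\he^2_{B^0}$. The only cosmetic point is that the auxiliary very-ampleness lemmas you cite by label live in the companion paper \cite{canonodal} rather than in this source, but the mathematical content invoked is identical.
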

\newsubsection{Case of multiple disjoint biseps}\lbl{multi-bisep-sec}
Now consider a collection of pairwise disjoint,
properly separating binodes
$\Theta=(\vtheta_1,...,\vtheta_n)$ of $X/B$, $\vtheta_i=(\theta_{i,1},\theta_{i,2})$,
 corresponding to a mutually transverse
collection of  codimension-2 boundary loci $\del_i=
\del_{\theta_{i,1}}\cap \del_{\theta_{i,2}}$. Then the corresponding loci
\[\lf{D}(\vtheta_i), \rt{D}(\theta_i)\subset X\sbr 2._B, i=1,...,n\]
are also mutually transverse, so we may blow them up
in any order, thus obtaining the azimuthal Hilbert scheme
of $X/B$
associated to $\Theta$, which we denote
by  $X\sbc 2._B(\Theta)$.
There is also an extended version, which is flat over
$B(\Theta)$, the blowup in any order of $\del_1,...,\del_n$,
 and is denoted $X\sbc 2._{B(\Theta)}$. As above, we obtain
over  $X\sbc 2._B(\vtheta)$ a mutually transverse collection
of $2n$ echelon data
  \[(\bE^{-1,0}(\vtheta_i), \lf{D^\dag}(\vtheta_i)),
(\bE^{0,-1}(\vtheta_i), \rt{D^\dag}(\vtheta_i)), i=1,...,n\]
from which we construct the associated echelon    modification $\be_\Theta$, which comes with a map ('modified \bn'):
\[\phi_\Theta:\be_\Theta\to\Lambda_2(\omega).\]
This of course factors $\phi_{\Theta'}$ for any subset
$\Theta'\subset\Theta$.
A similar construction can be made if $\Theta$ is a collection
containing separating nodes (seps) and properly separating binodes (biseps) of $X/B$,
all pairwise disjoint.
 As in \S\ref{multi-sep-sec},
it is easy to extend Lemma \ref{mod-bn-sepcan-1binode} to this situation and conclude
\begin{prop}\lbl{mod-bn-sepcan-disjoint}
Let $\Theta$ be a collection of pairwise disjoint seps and
biseps of $X/B$, then for any azimuthal subscheme of
$X/B$ contained in one separation component of $\Theta$,
the image of
the modified Brill-Noether map $\phi_\Theta$
coincides that of the Brill-Noether map
associated to the $\Theta$-sepcanonical system of $X$.
\end{prop}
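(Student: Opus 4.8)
The plan is to run in parallel the two arguments already carried out in the excerpt: the multi-sep comparison diagram of Proposition~\ref{all-seps-prop} (display \eqref{multisep-comp-diag-eq}) and the single-bisep comparison diagram of \S\ref{single-binode-interp-sec} (display \eqref{interp-binode}), and then to localise the verification on one $\Theta$-separation component at a time. Since $\phi_\Theta$ factors $\phi_{\Theta'}$ for every $\Theta'\subset\Theta$, and since the statement is about the image over a fixed azimuthal subscheme $z$ lying in a single separation component $Y$, it suffices to treat $Y$ together with the collection of seps and biseps incident to it, all oriented so that $Y$ lies on the left; the members of $\Theta$ not meeting $Y$ contribute only isomorphisms near $Y$, exactly as in the reduction at the start of the proof of Proposition~\ref{all-seps-prop} (the summands of $\Theta$ away from $Y$ can be dropped because the relevant $\lf D^\dag$'s are neutral there).

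First I would write down the combined \emph{multi-$*$-sep comparison diagram} over $X\sbc 2._B(\Theta)$. Setting
\[
\be^{\Theta,Y}=\pi_*\bigl(\omega_{X/B}\bigl(-2\!\!\sum_{\theta\in\Theta_1}\!\lf X(\theta)-\!\!\sum_{\vtheta\in\Theta_2}\!\lf X(\vtheta)\bigr)\bigr),
\]
where $\Theta_1,\Theta_2$ are the seps and biseps in $\Theta$, one has a commutative square whose top row identifies $\be^{\Theta,Y}$ twisted by the appropriate sum of the $\lf D^\dag$'s with $\be_\Theta$, whose bottom row compares $\Lambda_2$ of the twisted $\omega$ with $\Lambda_2(\omega)$, and whose left column is the ordinary Brill--Noether map of that twisted bundle. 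This is built exactly as \eqref{multisep-comp-diag-eq} and \eqref{interp-binode} were built: the $\lf D^\dag(\theta)$ for a sep, respectively $\lf D^\dag(\vtheta)$ for a bisep, carries the conormal data recorded in Lemma~\ref{conormal-sep-lem}, Lemma~\ref{azim-bundle-lem} and \eqref{D-vtheta-eq}, and on its interior the horizontal maps of the square are isomorphisms. The key input is then \cite{canonodal}, Proposition~6.10 (invoked already in Proposition~\ref{all-seps-prop}) together with its bisep refinement, which in the present excerpt is Proposition~\ref{sepcanonical-prop}: the left vertical map of the square, restricted to $Y$, is precisely the ordinary Brill--Noether map of the $\Theta$-sepcanonical system $|\omega_X|^\sep_\Theta|_Y$, including the residue and azimuthal constraints. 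Over a scheme $z$ on $Y$ disjoint from $\Theta$ the lower horizontal map is an isomorphism, so the image of $\phi_\Theta$ over $z$ equals that of the sepcanonical Brill--Noether map, which is the desired conclusion for interior $z$.

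It remains to treat azimuthal schemes $z$ that meet $\Theta$, i.e.\ schemes supported (partly) at a $\theta\in\Theta$. Here I would argue by the local normal forms already established one $*$-sep at a time: at a sep one uses the matrix $\Lambda_\theta$ of \eqref{Lambda-theta} and the ensuing discussion in \S\ref{sep-local-forms-sec}; at a bisep one uses $\Lambda_\vtheta$ of \eqref{modif-bn-matrix-hypell}--\eqref{modif-bn-matrix-rt-non-he} from \S\ref{bisep-modifying-bn-sec}. Because the $2n$ echelon data $(\bE^{-1,0}(\vtheta_i),\lf D^\dag(\vtheta_i))$, $(\bE^{0,-1}(\vtheta_i),\rt D^\dag(\vtheta_i))$ and the sep data of \eqref{echelon-data-sep-node} are mutually transverse (their degeneracy divisors being in general position over the mutually transverse $\del_i$), the multi-echelon modification is, near $z$, the superposition of the single-$*$-sep modifications in disjoint sets of local coordinates, so the local matrix of $\phi_\Theta$ near $z$ is block-assembled from the single-$*$-sep matrices; and one checks directly, just as in \S\ref{single-binode-interp-sec}, that on the boundary of $\lf D^\dag$ the modified map realises the systems $|\omega_Y(\rt n(\theta)\lf\theta)|$, respectively the twist-by-$(\lf\vtheta+\lf\theta_i)$ singular-azimuth subsystems of $|\omega_Y(2\lf\vtheta)|$, which are exactly the degenerations of $|\omega_X|^\sep_\Theta|_Y$ allowed by Definition~\ref{sepcanonical-defn}.

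\textbf{Expected main obstacle.} The routine part is the diagram chase and the block structure; the delicate point is bookkeeping the azimuthal data through the comparison. One must verify that the \emph{induced} left azimuth $\lf\zeta=\md\zeta\,\rt\zeta^{-1}$ attached to a right-hyperelliptic bisep (Definition~\ref{septwist-order-def} and Remark item~(4) after Definition~\ref{sepcanonical-defn}) is precisely the azimuthal condition cut out on sections of $\omega_Y(2\lf\vtheta)\otimes\md\zeta$ by the requirement that they glue, across $\theta_1$ and $\theta_2$, to sections of $\omega_{\rt X}$ obeying the hyperelliptic right azimuth---this is what the left column of \eqref{interp-binode} encodes, but now one needs it simultaneously for all biseps incident to $Y$, and one must also confirm that the normal-bundle identification \eqref{normal-bundle-azimuth} is compatible with the one used on the Hilbert scheme, \eqref{conormal-D-vtheta-eq} and \eqref{D-vtheta-eq}. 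A secondary subtlety, already flagged in Remark~\ref{bisep+all-seps-rem}, is that when $\Theta$ mixes seps and biseps it is cleanest to start the construction from $\be_{\Theta_1}$ (the all-seps modification of Proposition~\ref{all-seps-prop}) rather than from the bare Hodge bundle, and one should check that doing so changes neither $\be_\Theta$ nor $\phi_\Theta$; this follows from the mutual transversality of the sep and bisep echelon data together with the order-independence of echelon modification, but it needs to be stated.
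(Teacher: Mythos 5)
Your proposal is correct and follows essentially the same route the paper takes: the paper disposes of this proposition by asserting that the multi-sep comparison-diagram argument of \S\ref{multi-sep-sec} (Proposition \ref{all-seps-prop}, resting on \cite{canonodal}, Proposition 6.10) combines with the single-bisep interpretation of Lemma \ref{mod-bn-sepcan-1binode}, with schemes meeting $\Theta$ handled by the local normal forms of \S\ref{sep-local-forms-sec} and \S\ref{bisep-modifying-bn-sec}. Your writeup is a faithful (and more detailed) expansion of exactly that argument, including the combined comparison diagram that reappears later as \eqref{comparison-general-eq}.
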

A nodal curve $X_0$ is said to be of \emph{semicompact type}
if all its biseps are maximal; equivalently, if the node set
of $X_0$ is a disjoint union of seps, biseps, and absolutely nonseparating nodes.
\begin{cor}\lbl{semicompact-type-cor}
Let $X_0$ be a semicompact- type fibre of the versal family
$X/B$,  let $\Theta$ be the collection of all seps and biseps
occurring on $X_0$ and
\[\phi_\Theta: \be_\Theta\to \Lambda_2(\omega)\]
be the associated modification, defined over the azimuthal Hilbert scheme $X\sbc 2._B(\Theta)$. Then
\[\mathbb D_2(\phi_\Theta)=\he\sbc 2.\cup\bigcup\limits_{\theta\in\Theta\mathrm
{\ sep}} R_\theta.\]
\end{cor}

\Large
\part{General case}
\normalsize
Here we deal with curves of non-semicompact type, i.e. curves admitting 'separating
cycles' of nodes, herein called \emph{polyseparators}. Constructing an appropriate
modification of the Brill-Noether map to cover this case requires  a more elaborate kind of
blowup of the Hilbert scheme, related but not identical to 
the blowup induced by the natural
number-of-nodes stratification
of the boundary of the parameter space. This kind of blowup called \emph{stratified} or \emph{normal} blowup can be defined generally in the setting of a divisor with normal crossings.
Applying it to the Hilbert scheme leads to the \emph{azimuthal} Hilbert scheme. These constructions are pursued in \S \ref{polysep-sec}. Then in \S \ref{azi-bn-sec} we construct
the appropriate modification of the Brill-Noether map over
the azimuthal Hilbert scheme and establish its basic
properties. In \S \ref{excess-porteous-sec} we combine this
result with an excess-intersection version of the
Porteous formula to derive a formula for
the fundamental class of the closure of the
hyperelliptic locus (Theorem \ref{fund-class-thm}).
\S \ref{azi-int-sec} contains some of the intersection theory
needed to make explicit the formula of Theorem \ref{fund-class-thm}.
\newsection{Modifying Brill-Noether: polyseparators}
\label{polysep-sec}
\newsubsection{$S$-Stratified blowup}\lbl{stratified-blowup}
Let $T$ be a divisor with local normal crossings on a
smooth variety (or orbifold) $Y$
of dimension $n$, and let $S$ be a union of some irreducible components of the (orbifold)
singular locus $\sing(T)$, with the reduced structure, and some components of $T$ itself. Thus, 
each component of $S$ is either $(n-1)$ or $(n-2)$-dimensional.
We assume $S$ has the following 'transitivity' property:\par
(*)\quad if $t_1, t_2, t_3$ define local branches of $T$ so that
$(t_1, t_2)$ and $(t_2, t_3)$ define branches of $S$, then so does $(t_1, t_3)$.\par
 Set
$s_2(S)=S\cap\sing(T)$ and inductively,
\[ s_{i}(S)=\sing (s_{i-1}(S)), i>2\]
(all with the reduced structure; singular locus understood in the orbifold sense).
It is easy to see that $s_i(S)$ has codimension $i$ in $Y$ or is empty (e.g. if $i>n$) and that
$s_i(S)$ is contained in the locus
of points of multiplicity
$\geq i$ on $T$. In fact, $s_i(S)$ coincides with
the locus of points lying on $i$ local branches of $T$, every pair of which intersects in a branch of $S$.
Thus a general point of $s_i(S)$ lies
on $\binom{i}{2}$ local analytic components of $S$. The chain
\[s_2(S)\supset s_3(S)...\supset s_n(S)\] is called the \emph{$S$-stratification} of $Y$.

\begin{example}\lbl{polysep-stratif}
Let $T=\delta_0\subset\mgbar$, the divisor of
curves with a nonseparating node, and let $\mathcal S\subset \sing(\delta_0)$
denote the locus of curves with a separating binode.
The associated $\mathcal S$-stratification, called the \emph{polyseparator
stratification}, has $i$-th stratum corresponding to curves with
a degree-$i$ polyseparator.
\end{example}
Given $S$ as above, the \emph{stratified blowup} of $Y$ associated
to $S$, denoted \[B^\sigma_S(Y)\stackrel{b^\sigma}{\to}Y\] is the (smooth) variety obtained from $Y$ by first blowing up $s_n(S)$, then the
(smooth) proper transform of $s_{n-1}(S)$, etc.
(by definition, the blowup of the empty set is the identity
$Y\to Y$).  We denote by $E^\sigma(i)$
 or $E^\sigma_S(i)$ the proper transform of $s_i(S)$ on $B^\sigma_S(Y)$, so that
\[(b^\sigma)\inv(s_i(S))=\sum\limits_{j=n}^iE^\sigma(j).\]
Clearly, $E^\sigma(i)$ is smooth and forms a locally trivial
fibre bundle over the normalization of $s_i(S)$. The fibre, denoted $W^\sigma_{i-1}$,
is a toric variety which may be identified with the stratified blowup of the union of the coordinate hyperplanes in $\P^{i-1}$.
The structure of such bundles generally is described in \S\ref{w-bundles-sec}.
\newsubsection{Normal blowup}\lbl{normal-blowup-sec}
Here we describe an a-priori different, 'purely codimension-2' blowup construction
that ultimately leads to the stratified blowup.
\par We continue the above notation.
Locally, let $S_1, ...,S_k$ be all the local branches of $S$ through some point $p$.
From the transitivity property, it follows that there
are branches
 $T_1,..., T_r$ of  $T$ through $p$, such that
$k=\binom{r}{2}$ and the $S_i$ are the pairwise intersections
$T_j\cap T_\l, j\neq\l$; in other words, locally at $p$,
there is a normal-crossing subdivisor $T'=\bigcup\limits_{i=1}^r T_i$ such that
\[ S=\sing(T').\] Let
\[\mathcal J=\prod\limits_{i=1}^k I_{S_i}.\]
In fact, the ideal $\mathcal J$ is defined globally.
We define the \emph{$S$-normal blowup} of $Y$ as
\[B^\perp_S(Y):=\Bl_{\mathcal J}(Y)\]
Note that this coincides with the 'main'
component of the fibred product
$\prod (\Bl_{S_i}(Y)/Y)$, i.e. the unique component
dominating $Y$.
\par Our purpose is to prove:
\begin{prop}\lbl{stratified-is-normal-prop}
Notations as above, the $S$-stratified and normal blowups are equivalent
over $Y$.
\end{prop}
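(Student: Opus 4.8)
The plan is to prove the two blowups agree by a local computation, reducing to the universal model and then invoking an inductive argument on the dimension $n$ (equivalently, on the maximal multiplicity occurring along $T$).

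First I would observe that both constructions are local on $Y$ in a suitable sense: the stratified blowup is defined by a sequence of blowups of smooth centers, the normal blowup by blowing up the ideal $\mathcal J=\prod I_{S_i}$, and both commute with restriction to (analytic or \'etale) opens. So it suffices to work near a point $p$ lying on exactly $r$ branches $T_1,\dots,T_r$ of $T$ with $S=\sing(T')$ for $T'=\bigcup T_i$ locally, and where $s_i(S)$ is (locally) the union of the codimension-$i$ coordinate subspaces obtained by intersecting $i$ of the $T_j$. After choosing coordinates so that $T_i=\{x_i=0\}$, $1\le i\le r$, the whole picture is the pullback of the universal model $Y_0=\mathbb A^r$ (times a trivial factor coming from the remaining coordinates, which plays no role), with $T'$ the union of coordinate hyperplanes, $S=\sing(T')$ the union of codimension-2 coordinate subspaces, $\mathcal J=\prod_{i<j}(x_i,x_j)$, and $s_i(S)$ the union of codimension-$i$ coordinate subspaces. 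Thus the statement reduces to the purely toric assertion that, on $\mathbb A^r$ with these centers, the stratified blowup (blow up the origin $s_r(S)$, then the proper transforms of $s_{r-1}(S)$, etc.) equals $\Bl_{\mathcal J}(\mathbb A^r)$.

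Next I would prove this toric identity by induction on $r$. For $r=2$, $\mathcal J=(x_1,x_2)$ and both sides are the ordinary blowup of the origin in $\mathbb A^2$, so there is nothing to prove. For the inductive step, note that blowing up the origin $s_r(S)=\{0\}$ of $\mathbb A^r$ first: on each standard chart of $\Bl_0(\mathbb A^r)$ the proper transforms of the coordinate subspaces again form a coordinate-subspace configuration of the same combinatorial type but in one fewer 'essential' variable, so by induction the remaining blowups in the stratified construction produce, chart by chart, the blowup of the product ideal $\prod I_{S'_i}$ of the proper transforms of the branches of $S$. On the other hand, I would compute the total transform of $\mathcal J=\prod_{i<j}(x_i,x_j)$ under $\Bl_0$: in the $i$-th chart ($x_j=x_i y_j$ for $j\ne i$, with $x_i$ the exceptional coordinate), one checks that $(x_i,x_j)\O$ becomes principal exactly for the $r-1$ pairs containing $i$, generated by $x_i$, while the pairs $(x_j,x_k)$ with $j,k\ne i$ pull back to $x_i\cdot(y_j,y_k)$. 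Hence $\mathcal J\O$ factors as $x_i^{\,r-1}$ times the product ideal of the $(y_j,y_k)$, which is precisely the product ideal of the proper transforms of the $S$-branches, up to an invertible-times-effective-Cartier factor. Since blowing up an ideal is unchanged by multiplying it by an invertible ideal or by an ideal of an effective Cartier divisor already blown up, $\Bl_{\mathcal J}(\mathbb A^r)$ agrees, after the first blowup $\Bl_0$, with $\Bl_{\prod I_{S'_i}}$ of the result — which by induction is the stratified blowup of the residual configuration. Matching this with the description of the stratified blowup above (which also begins with $\Bl_0$ and then performs exactly the blowups realizing $\Bl_{\prod I_{S'_i}}$) gives the desired equivalence over $Y_0$, hence over $Y$.

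The main obstacle I anticipate is the bookkeeping in the inductive step: one must verify carefully that after $\Bl_0(\mathbb A^r)$ the proper transforms of the $s_i(S)$ are exactly the $s_i$ of the transformed $S$-configuration (so that the two constructions really do 'stay in sync' at every stage, not just at the end), and that the transitivity property $(*)$ is preserved so the inductive hypothesis applies. A secondary technical point is the compatibility of $\Bl_{\mathcal J}$ with the universal pullback: one needs that $Y\to Y_0$ (étale-locally) is flat, or at least that $\mathcal J$ and the $I_{s_i(S)}$ are the pullbacks of the universal ideals, which follows from the normal-crossings hypothesis and the transitivity property; I would state this as the reduction lemma at the outset and then do the toric computation cleanly. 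Everything else — the fact that blowing up is insensitive to multiplying the center by an invertible ideal, and that a blowup of a center already a Cartier divisor is an isomorphism — is standard (e.g. Hartshorne, II.7) and I would merely cite it.
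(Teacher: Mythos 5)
Your reduction to the toric model $\mathbb A^r$ and the chart-by-chart computation of the total transform of $\mathcal J=\prod_{i<j}(x_i,x_j)$ under the blowup of the origin are fine, and so is the observation that the stratified blowup localizes, on each chart of $\Bl_0(\mathbb A^r)$, to the stratified blowup of the residual configuration $\{y_j=0\}_{j\neq i}$. The gap is at the pivot of your induction: the assertion that $\Bl_{\mathcal J}(\mathbb A^r)$ ``agrees, after the first blowup $\Bl_0$, with'' the blowup of the transformed ideal. What your computation actually identifies with the stratified blowup is $\Bl_{\mathcal J\cdot\O}(\Bl_0\mathbb A^r)=\Bl_{\mathfrak m^N\mathcal J}(\mathbb A^r)$ for $N\gg 0$, where $\mathfrak m=(x_1,\dots,x_r)$. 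To conclude that this equals $\Bl_{\mathcal J}(\mathbb A^r)$ you need $\mathfrak m\cdot\O_{\Bl_{\mathcal J}}$ to be invertible, i.e.\ that the normal blowup already factors through $\Bl_0$. Neither of the standard facts you cite gives this: $\mathfrak m$ is not invertible on $\mathbb A^r$, and the origin is not one of the centers $S_{ij}$ whose product defines $\mathcal J$. The danger is real: already for $r=3$, on the full fibre product $\prod\Bl_{(x_i,x_j)}$ there are ``cyclic'' points where $x_2/x_1$, $x_3/x_2$, $x_1/x_3$ are all regular and vanish, and there $\mathfrak m\cdot\O$ is not principal; only the restriction to the main component rules this out.

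The missing idea is exactly the content of Lemma \ref{nbl}(i) in the paper: on the main component the ratios $[x_i:x_j]$ are multiplicatively compatible, so the relation ``$x_j/x_i$ is regular at $p$'' is a \emph{transitive} total preorder; hence there is locally a minimal coordinate $x_{i_0}$ dividing all the others, and $\mathfrak m\cdot\O_{B^\perp}=(x_{i_0})$ is principal --- indeed every stratum $s_i(S)$ pulls back to a Cartier divisor on $B^\perp$. Once you have that lemma, your induction closes, but at that point you have essentially reproduced the paper's proof, which dispenses with the induction altogether: the $S_i$ pull back to Cartier divisors on $B^\sigma$ (giving $B^\sigma\to B^\perp$ by the universal property), and the minimal-coordinate argument makes the strata Cartier on $B^\perp$ (giving $B^\perp\to B^\sigma$). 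So either state and prove the minimal-coordinate lemma up front, or restructure the induction so that the blowup you peel off first is one of the actual factors $(x_i,x_j)$ of $\mathcal J$ --- through which $\Bl_{\mathcal J}$ factors automatically --- at the cost of a harder comparison with the deepest-stratum-first order of the stratified blowup.
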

The Proposition  will be proven below after some discussion. Its conclusion
amounts to saying that the identity on $Y$ lifts to an isomorphism $B^\perp\simto
B^\sigma$. This assertion is local over $Y$, so replacing $T$
by the union of its branches through a given point
and using the transitivity property, we may as well assume
$S=\sing(T)$. Clearly, each irreducible component $S_i$ of $S$ pulls
back to a Cartier divisor on $B^\sigma$, so by the universal property of blowing up we get a morphism
$B^\sigma\to \Bl_{S_i}(Y)$. Putting these together
yields a morphism
$B^\sigma\to \prod (\Bl_{S_i}(Y)/Y)$, which as $B^\sigma$
 is irreducible lands in the
main component, whence a morphism
\[B^\sigma\to B^\perp .\]
To go the other way, we first study the normal blowup. We work locally analytically, so
we can write \[S=\sing(T), \ T=\bigcup T_i, T_i=(t_i)\]
 where the
 $t_i$ are part of a regular
sequence of parameters. For any index-set  $I$, let
\[T_{I}=\bigcap\limits_{i\in I} T_i\] with ideal $\mathcal J_{I}$ locally generated by $t_i, i\in I$. Then we get the local factorization
\[\mathcal J=\prod\limits_{i<j}\mathcal J_{ij}, \ \mathcal J_{ij}=(t_i, t_j).\]
Also, let $T(i)=\bigcup\limits_{|I|=i}T_I$, 
and let $T(i)^{\norm}$ be
its normalization, which is smooth and locally a disjoint
union of branches:
\[T(i)^\norm=\coprod\limits_{|I|=i}T_I.\]
Let $T_I^o$ be the interior
 of $T_I$, i.e. $T_I\setminus\bigcup\limits_{j\not\in I}T_j$.
For $i<j$, let $B_{ij}$ be the blowup
of $T_{ij}=T_i\cap T_j$ in $Y$. Then we have
\[B^\perp_{T}(Y)/Y={\prod\limits_{i<j}}'(B_{ij}/Y)\]
 (where $\prod '$ means unique dominating component of
 fibre product over $Y$). Let $b:B^\perp_{T}(Y)\to Y$ be the natural map.
For any $I$, let $E_I$ be the closure of $b\inv(T_I^o)$,
and similarly for $E(i)$ and $E(i)^{\norm}$.\par
To describe the structure of the exceptional divisors, we make the following construction. Let $L_0, ..., L_n$ be a collection of  line
bundles on a scheme $Z$, and let
\[P_{i,j}=\P(L_i\oplus L_j), \ G_{i,j}=P_{i,j}\setminus (\P(L_i)
\cup \P(L_j)), 0\leq i<j\leq n.\]
We identify $G_{i,j}$ with the $\C^*$ bundle associated to
$L_i\inv\otimes L_j$.
Consider the map
\eqspl{W[L]-def-eq}{\prod\limits_{i=0}^{n-1}G_{i,i+1}\to \prod\limits_{0\leq i<j\leq n}P_{i,j}\\
(\lambda_1,...,\lambda_n)\mapsto (\lambda_i\cdots\lambda_{j-1}:0\leq i<j\leq n),\  \lambda_0:=1
} and let $W[L_0,...,L_n]$ be the closure of its image, obviously a fibre bundle over $Z$ with fibre
a toric $n$-fold $W_n=W[\C,...,\C]$ ($n+1$ factors, where $\C$ denotes the trivial line bundle over a point).
\par It is easy to see that $W[L_0,...,L_n]$ can also be realized as the closed image of the rational map
\eqsp{
\P[L_0\oplus...\oplus L_n]&\dashrightarrow \prod\limits_{
0\leq i\leq j\leq n} P_{i,j},\\
[x_0,...,x_n]&\dashrightarrow ([x_i,x_j]:0\leq i\leq j\leq n).
} While the former description is better suited for the Lemma that follows, the latter one is more convenient for further study (see \S \ref{w-bundles-sec} below). Note that
$W[L_0,...,L_n]$ is independent  of the order of the line bundles. By abuse of
notation, we will denote $W[L_0,...,L_n]$ by $W[\bigoplus\limits_{i=0}
^n L_i]$ when the splitting of the rank-$(n+1)$ bundle is understood
up to order.
\begin{lem}\label{nbl}
\begin{enumerate}
\item  $B^\perp_{T}(Y)$ is smooth.
\item $T(i)^{\norm}$ is smooth, the
inverse image $\tilde T(i+1)$ of $T(i+1)$ on it is a divisor with
normal crossings. The map $E(i)^{\norm}\to T(i)^{\norm}$ factors
through a map
 \[E(i)^{\norm}\to B^\perp_{\tilde T(i+1)} T(i)^{\norm}\]
which is a
 locally trivial fibration with fibre $W_{i-1}$, of the form
 $W[N_{T(i)^{\norm}\to Y}]$.
 \item The reduced total transform of $T(i)$ on $B^\perp_{T}(Y)$
 is $\bigcup\limits_{i'\geq i} E(i')$, and this is
 a divisor with normal crossings.
\item The exceptional divisor of $B^\perp_T(Y)\to Y$ is
$\bigcup\limits_{i\geq 2} E(i)$.
\end{enumerate}
\end{lem}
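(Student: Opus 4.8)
The plan is to prove Lemma \ref{nbl} by induction on $n-i$ where $i$ is the stratum index, working entirely in local analytic coordinates and then globalizing via the canonical constructions. The key observation is that near a point of $s_n(S)$ (an $n$-fold point of $T$), the local picture is modelled exactly on $Y=\mathbb A^n$ with $T=\bigcup_{i=1}^n(t_i=0)$, $S=\bigcup_{i<j}(t_i=t_j=0)$, and $\mathcal J = \prod_{i<j}(t_i,t_j)$; so the structure-theoretic assertions reduce to understanding the single model blowup $\mathrm{Bl}_{\mathcal J}(\mathbb A^n)$, which by the displayed description is the closure of the image of the rational map $\mathbb A^n \dashrightarrow \prod_{i<j}\mathbb P^1$, $(t_1,\dots,t_n)\mapsto ([t_i:t_j])_{i<j}$.

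First I would establish (1): that $B^\perp_T(Y)$ is smooth. For this I would exhibit an explicit affine cover of the model blowup. For each choice of a \emph{total order refinement} — equivalently a "tournament" / simplex-chamber combinatorial datum — one gets a chart in which the appropriate ratios $\lambda_k = t_{\sigma(k)}/t_{\sigma(k-1)}$ (along a maximal chain) together with one of the $t$'s and the remaining transverse coordinates form a regular system of parameters; checking that these charts cover $B^\perp_T(Y)$ and that the transition maps are well-defined amounts to the combinatorics of the toric variety $W_{n-1}$. This is the analogue of the local coordinate computations already carried out in \S\ref{mod-hilb-sec} for the case $n=2$, $n=3$, and I would simply extend that bookkeeping. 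Once the model is smooth, $B^\perp_T(Y)$ is smooth for general $(Y,T,S)$ because the construction is étale-local and the $t_i$ extend to a regular sequence.

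Next, for (2), I would fix an index set $I$ with $|I|=i$ and work near the generic point of $T_I$. There the branches of $T$ passing through are exactly $T_j$, $j\in I$, so the factorization $\mathcal J = \big(\prod_{j<k,\, j,k\in I}(t_j,t_k)\big)\cdot(\text{ideal supported off }T_I)$ shows that blowing up $\mathcal J$ near $T_I$ is, up to a factor that is invertible there, blowing up $\prod_{j<k\in I}(t_j,t_k)$; restricting to $T_I^{\mathrm{norm}}$ and tracking the exceptional fibre, this is precisely $W[N_{T_I^{\mathrm{norm}}/Y}]$ fibred over $T_I^{\mathrm{norm}}$, using the definition \eqref{W[L]-def-eq} of the $W[\cdot]$-bundle with the $i$ summands of the normal bundle $N_{T_I/Y} = \bigoplus_{j\in I}\mathcal O(T_j)|_{T_I}$ as the line bundles $L_0,\dots,L_{i-1}$. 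Summing over all $|I|=i$ gives the statement for $T(i)^{\mathrm{norm}}$, and the factoring-through-$B^\perp_{\tilde T(i+1)}T(i)^{\mathrm{norm}}$ claim is the compatibility of the two blowups along the deeper strata, which follows from the universal property of blowing up applied to the pulled-back ideals.

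Then (3) and (4) are essentially formal consequences: the reduced total transform of $T(i)$ is $\bigcup_{i'\ge i}E(i')$ because every branch $T_I$ with $|I|=i$ meets the deeper strata $T_J$, $J\supsetneq I$, whose proper transforms are the $E(i')$ with $i'>i$, and normal crossings of $\bigcup_{i'\ge i}E(i')$ is checked in the same local charts used for (1) — in each chart these divisors are coordinate hyperplanes. Statement (4) says the exceptional locus is $\bigcup_{i\ge 2}E(i)$, which is immediate since $b$ is an isomorphism exactly over $Y\setminus S = Y\setminus s_2(S)$. Finally, having (1)–(4) in hand, I would deduce Proposition \ref{stratified-is-normal-prop}: the morphism $B^\sigma\to B^\perp$ already constructed is proper and birational between smooth varieties, and by (2)–(3) the exceptional divisors match stratum-by-stratum with the same normal-bundle data, so one checks it is an isomorphism on each chart — or, more cleanly, one argues that $B^\sigma$ also solves the universal problem of making $\mathcal J$ invertible by an induction on strata (each blowup of a proper transform of $s_i(S)$ contributes the factor $\prod_{|I|=i}$ to $\mathcal J$), hence factors through $B^\perp$, giving a two-sided inverse. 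I expect the main obstacle to be the combinatorial coordinate bookkeeping in (1)–(2): organizing the affine charts of $W_{n-1}$ and verifying the normal-bundle splitting in \eqref{W[L]-def-eq} matches the $\mathcal O(T_j)$'s coherently across overlaps; everything else is formal manipulation with universal properties.
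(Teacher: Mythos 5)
Your proposal is correct and follows essentially the same route as the paper: smoothness via affine charts indexed by orderings of the coordinate pairs with ratios along a maximal chain (plus one minimal coordinate) as regular parameters, identification of the exceptional fibration as a $W[N]$-bundle by forgetting the internal ratios over the proper transform of $T_I$, and (3)–(4) as formal consequences of the chart analysis. The only addition is your sketch of how Proposition \ref{stratified-is-normal-prop} follows, which the paper handles separately but in the same spirit.
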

\begin{proof} (i) Working locally, we cover the blowup with $2^r$ open sets $U$,
each specified by a choice of ordering $i\to j$ or
$t_i\to t_j$ on each integer
pair
$\{i,j\}\subset [1,r],$ indicating that $t_j/t_i$ is regular there.
Then $\to$ generates a total order $\prec$, possibly with degeneracies or equivalences. Let $i_0$ be a minimum, unique up to equivalence.
Then for any $i\prec j$, $j$ is reachable from $i$ by a chain of immediate-
successor pairs and equivalent pairs. For an equivalent pair $a\sim b$, clearly $t_a/t_b$ is a unit. For any immediate successor $a\prec b$
we may up to equivalence assume $a\to b$.
Thus, we may choose a set $P$ consisting of a
maximal collection $P_1$ of immediate successors plus a suitable collection $P_2$ of equivalent pairs, $k-1$ from each equivalence
class of cardinality $k$, such that for any  $i\prec j$
(and this includes $i_0\prec j, \forall j=1,...,r$), $j$
is reachable from $i$ by a succession
of pairs in $P$. Clearly $P$ has $n-1$ elements.
Then $(t_{i_0}, t_i/t_j: (i,j)\in P_1)$ is a regular system of parameters on the blowup. These together
with the units corresponding to $P_2$ and
complementary coordinates to the $t_i$ yield a coordinate system on $U$.
\par (ii) The proper transform of $T_I$ in the blowup
is locally the zero locus of $t_i/t_j, \forall i\in I, j\not\in I$. This admits a forgetful map, forgetting the ratios $t_i/t_{i'}, i,i'\in I$, which clearly lands in the space of the $t_j$ and $t_j/t_{j'}, j,j'\not\in I$, i.e. the normalized blowup $B^\perp_{(T_j|_{T_I}:j\not\in I)} T_I$, and has all fibres $W_{|I|-1}$.\par
(iii) and (iv): Straightforward, given the analysis above.
Note that the local branches of $T(i)$ define a splitting of the normal bundle with summands defined up
to order, as required in the definition above.
Note that in the schematic total transform of $T(i)$, the $E(i')$ with $i'>i$ will appear with higher multiplicities due to $T(i')$ lying on
multiple branches of $T(i)$.
\end{proof}
\begin{rem}\lbl{min-coordinate} The mimimal index $i_0$ and corresponding minimal coordinate $t_{i_0}$ are of importance in their own right and will be used in the sequel.
\end{rem}
\begin{proof}[Proof of Proposition \ref{stratified-is-normal-prop}]
It remains to show there is a morphism $B^\perp\to B^\sigma$. This
follows easily if we show that each $i$-dimensional stratum of the
$S$-stratification pulls back to a Cartier divisor on $B^\perp$. We
use the notations developed in the proof of the Lemma. Then, on a
suitable open set and after rearranging so that the ordering of the
coordinates is the standard one $1\prec 2...\prec r$, the
$i$-dimensional stratum, defined in $Y$ by $t_{i+1}, ...,t_r$, pulls
back
 to the Cartier divisor in $B^\perp$ defined by $t_{i+1}$,
 because $[t_{i+2}/t_{i+1}],...,[t_r/t_{i+1}]$ are regular.
 This concludes the proof.
\end{proof}

\newsubsection{Azimuthal blowup}\lbl{azi-blowup-sec}
We extend the notion of left azimuthal modification
of \S \ref{azimuthal-curves-sec} to the case of multiple,
not necessarily disjoint, binodes.
This again will be needed primarily for the sake of comparing
a modified \bn map with an ordinary one associated to
a sepcanonical system.
The construction will be local over the
base, so fix a fibre $X_0$ of $X/B$,  and let $\Theta=\Theta_1\cup\Theta_2$ be a set of seps
$\theta_i\in\Theta_1$ and biseps $\vtheta_i\in\Theta_2$
occurring on
$X_0$, and assume $\Theta_2$ has the transitivity property as in \S \ref{stratified-blowup}. Let $Y$ be a component of the separation
$X_0^\Theta$, and assume that all the elements of $\Theta$ 
are oriented
so as to have $Y$ on their left (we call this a \emph{$Y$-compatible} orientation).
Let $B^\sigma(\Theta)$ denote the normal (=stratified) blowup of $B$ corresponding
to $\Theta_2$. This is virtually smooth, i.e.
smooth if $B$ is smooth and $X/B$ is versal, and is
independent of orientations.
Also $B^\sigma(\Theta)$ maps to the blowup $B(\vtheta), \forall \vtheta\in\Theta_2$.
When $\Theta$ coincides with the set of all seps and biseps
of $X_0$ we will omit $\Theta$. Locally, branches
  of the exceptional locus of $B^\sigma(\Theta)/B$ correspond to polyseparators
\mbox{$\Pi\subset |\Theta_2|:=\bigcup\limits_{\vtheta\in\Theta_2}\vtheta$}
and we denote the branch corresponding to $\Pi$ by $\Xi(\Pi)$.
\par
Note that a point $0'\in B^\sigma(\Theta)$ over $0\in B$ corresponds to a collection of middle azimuths $\md{\zeta}(\vtheta),\forall
\vtheta\in\Theta_2$, subject to
relations coming from maximal polyseparators. The point $0'$ is said to be regular if all the
$\md{\zeta}(\vtheta)$ are regular. This holds if and only if
$0'$ sits only on branches
$\Xi(\Pi)$ where $\Pi$ is a \emph{maximal} polyseparator
on $X_0$.

Let $X_{B^\sigma(\Theta)}$ be the base changed family.
This contains Cartier divisors $X_{\Xi(\Pi)}$ for polyseparators $\Pi$. The latter splits as a union of Weil
divisors, where $\Pi=(\theta_1,...,\theta_n)$ (cyclical
arrangement):
\[X_{\Xi(\Pi)}=\bigcup\limits_{i=1}^n \lf{X}(\theta_i, \theta_{i+1})_{\Xi(\Pi)}.\]
To describe this in local coordinates, let $t_i=x_iy_i$
be a local equation for $\del_{\theta_i}, i=1,...,n$, and
recall that at each point
of $\Xi(\Pi)$, there is a 'minimal' index $j$ so that
$B^\sigma(\Theta)$ admits local parameters\[t_1/t_j,...,t_{j-1}/t_j,t_j,
t_{j+1}/t_j, ...,t_n/t_j\] and $t_j$ is an equation for
$\Xi(\Pi)$.
At $\theta_i$,  $\lf{X}(\theta_i, \theta_{i+1})_{\Xi(\Pi)}$
is either Cartier, if $i=j$, or has local equations $y_i, t_j$
if $ j\neq i$.

The $\lf{X}(\theta_i, \theta_{i+1})_{\Xi(\Pi)}$ are mutually transverse for fixed $\Pi$ (in fact, three
distinct ones have empty intersection). Clearly they are also
mutually transverse for distinct $\Pi$ (as the divisors $\Xi(\Pi)$ are already transverse).
Then consider their joint blowup, i.e.  the blowup of the ideal sheaf $\prod\limits_i
\I_{\lf{X}(\theta_i, \theta_{i+1})_{\Xi(\Pi)}}$, which coincides with the unique dominant component of the fibre product
$\prod\limits_i \Bl_{\lf{X}(\theta_i, \theta_{i+1})_{\Xi(\Pi)}}X_{B^\sigma(\Theta)}$. We denote this common blowup by
$_YX_{B^\sigma(\Theta)}$ and call it the \emph{azimuthal modification of
$X/B$ corresponding to $Y$} .                                           It depends on $Y$ only for orientation, and is defined over a neighborhood of
the fibre  $X_0$ containing $Y$ and defines
a family of curves over a neighborhood of $0\in B$.
As in \S \ref{azimuthal-curves-sec}, it is virtually smooth.
It comes equipped with a birational morphism over $B(\Theta)$
\[_YX_{B^\sigma(\Theta)}\to X_{B^\sigma(\Theta)}.\]
Now suppose that $0'\in B^\sigma(\Theta)$ is a regular
point as above, hence lies only  on divisors $\Xi(\Pi)$
where $\Pi$ is a maximal polyseparator on $X_0$. Given any bisep $\vtheta$ (as always, oriented with $Y$ to its left)
on $X_0$, it is contained as
an adjacent pair in a unique
maximal polyseparator $\Pi(\vtheta)$, and we let
\[_YX(\vtheta)=\lf{X}(\vtheta)_{\Xi(\Pi(\vtheta))}\]
be the corresponding (Cartier) divisor. Additionally,
we have for each sep $\theta$ on $X_0$, a divisor
$_YX(\theta)$ on $_YX_{B^\sigma(\Theta)}$ which is the
pullback of $\lf{X}(\theta)$.
Then on $_YX_{B^\sigma(\Theta)}$ we may consider
the twisted canonical bundle
\eqspl{rel-sepcanonical}{_Y\omega^\sep=_Y\omega^\sep(\md{\zeta}_\bullet)
:=\omega_{_YX_{B^\sigma(\Theta)}/B^\sigma(\Theta)}
(-2\sum\limits_{\theta_i\in\Theta_1} {_YX}(\theta_i)-\sum\limits_{\vtheta_i\in\Theta_2} {_YX}(\vtheta_i))}
where $\md{\zeta}_\bullet$ is the collection of
(regular) middle azimuths corresponding to $0'$
(and will be suppressed when understood).
We call this the \emph{relative sepcanonical system adapted to} $Y$.
 Proposition 6.10 of \cite{canonodal} can now be generalized
 as follows
 \begin{prop}\lbl{stratified-sepcanonical-prop}For the above line bundle $_Y\omega^\sep$, the image
of the natural restriction map
\[\pi_*(_Y\omega^\sep)\to H^0(Y, _Y\omega^\sep\otimes\O_Y)\]
coincides with the restriction of the sepcanonical system $|\omega_X|^\sep$ on $Y$.\end{prop}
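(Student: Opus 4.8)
The plan is to extend the argument of \cite{canonodal}, Proposition~6.10 (the semicompact case, which in the present notation handles a collection of seps together with \emph{disjoint maximal} biseps), the new ingredient being that the stratified blowup $B^\sigma(\Theta)$ resolves each polyseparator contained in $\Theta_2$ into a normal-crossings configuration on which the constituent boundary divisors $\del_{\theta_i}$ have been separated --- the ratios $t_i/t_j$ becoming regular parameters, cf.\ Lemma~\ref{nbl} and Remark~\ref{min-coordinate}. First I would localise at the point $0'\in B^\sigma(\Theta)$, which by hypothesis is regular: there the azimuthal modification ${_YX}_{B^\sigma(\Theta)}$ is virtually smooth, all of the divisors ${_YX}(\theta_i)$ and ${_YX}(\vtheta_i)$ are Cartier, and the middle azimuths $\md{\zeta}(\vtheta)$ are regular. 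Using the relation $\lf{X}(\theta)+\rt{X}(\theta)\sim\del_\theta$ on the modified base for every $*$-sep $\theta$ (exactly as in \cite{canonodal}, Proposition~6.10), one rewrites ${_Y\omega^\sep}$ near $Y$ as $\omega(2\sum\rt{X}(\theta_i)+\sum\rt{X}(\vtheta_i))$; a local computation as in \S\ref{bn-sep} --- where $\omega(i\lf{X}+j\rt{X})$ restricts to the left side as $\omega_Y$ twisted by $(1+i-j)$ times the relevant node --- then identifies this sheaf restricted to $Y$ with $\omega_Y(3\sum_{\lf{\theta_i}\in Y}\lf{\theta_i}+2\sum_{\lf{\vtheta_i}\subset Y}\lf{\vtheta_i})$, which contains the sepcanonical bundle $\omega_X^\sep|_Y$ as the subsheaf whose pole order is lowered to $2$ at the co-hyperelliptic unimarks.

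For the inclusion of the image in $|\omega_X|^\sep|_Y$ (necessity) I would argue verbatim as in the necessity half of \cite{canonodal}, Proposition~6.10. By versality, for a sep or bisep $\theta'$ of $X_0$ extremal relative to $Y$ one can find a nearby fibre on which $\rt{X}(\theta')$ survives intact while $\lf{X}(\theta')$ is smoothed; a section of ${_Y\omega^\sep}$ that extends to that fibre must satisfy the residue condition at $\lf{\theta'}$ and, when $\rt{X}(\theta')$ is hyperelliptic, the azimuthal condition there; and since the sepcanonical system of a hyperelliptic $2$-component is composite with its canonical system (\cite{canonodal}), one iterates, ``peeling off'' the hyperelliptic $2$-components lying between the extremal $*$-seps and $Y$, until one reaches $Y$ itself. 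The only point to add for polyseparators is that an extremal element relative to $Y$ may be a bisep belonging to a polyseparator $\Pi$; since such a bisep is never left-hyperelliptic, only a residue condition is at stake there, which falls out of the same computation.

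For sufficiency --- a section $\alpha_0$ of the restricted sepcanonical system extends --- I would induct on $|\Theta|$ with a secondary induction on the sizes of the polyseparators contained in $\Theta$. When $\Theta$ contains a polyseparator $\Pi=(\theta_1,\dots,\theta_n)$ with $n\ge 3$, I use the stratified blowup to clear one of its nodes: near the branch $\Xi(\Pi)$, whose local equation is the minimal coordinate $t_j$ (Remark~\ref{min-coordinate}), the parameters $t_i/t_j$ allow a deformation to a fibre smoothing exactly the node $\theta_i$ for a chosen $i\ne j$ while keeping $\Theta\setminus\{\theta_i\}$ intact; applying the Residue Lemma (as in \cite{canonodal}, Proposition~6.10) to $Y$ glued across $\theta_i$ to its neighbouring $\Theta$-component --- where only residue conditions are in play, since a polyseparator of degree $\ge 3$ carries no hyperelliptic, hence no azimuthal, data --- deforms $\alpha_0$ to a section on the smoothed fibre subject to the same residue and azimuthal constraints. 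Since smoothing one node of a degree-$n$ polyseparator yields a degree-$(n-1)$ polyseparator (a maximal bisep when $n=3$), finitely many such steps bring us to a semicompact fibre, where $\Theta$ consists of disjoint seps and maximal biseps and \cite{canonodal}, Proposition~6.10 applies; the intermediate steps peeling off an ordinary sep or a maximal bisep are identical to that proof, including the ``arbitrary residues summing to zero'' device for imposing the azimuthal constraints.

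I expect the main obstacle to be the local verification, on ${_YX}_{B^\sigma(\Theta)}$ near a polyseparator branch $\Xi(\Pi)$, of the two facts that make the induction run: (a) that ${_Y\omega^\sep}$ really restricts to $Y$ as asserted --- this requires tracking how the blowup of the Weil divisors $\lf{X}(\theta_i,\theta_{i+1})_{\Xi(\Pi)}$ interacts with $\omega_{{_YX}/B^\sigma}$ at the \emph{non-minimal} nodes of $\Pi$, where those divisors are not Cartier before the blowup --- and (b) that the one-node-at-a-time smoothing of $\Pi$ is genuinely available inside $B^\sigma(\Theta)$ and compatible with holding the remaining elements of $\Theta$ rigid. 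Once these coordinate computations are carried out, the global induction is formally parallel to \cite{canonodal}.
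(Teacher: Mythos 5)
Your proposal is correct, and its necessity half is essentially the paper's (peel off extremal $*$-seps as in Proposition 6.10 of \cite{canonodal}, noting that biseps inside proper polyseparators carry no azimuthal condition — though you should say \emph{right}-hyperelliptic rather than left-hyperelliptic there, since the azimuthal constraint at $\lf{\vtheta}$ is triggered by $(\rt{X}(\vtheta),\rt{\vtheta})$ being hyperelliptic). For sufficiency, however, you take a genuinely longer route than the paper. The paper's proof rests on three elementary consequences of the regularity of $0'$: any bisep with one point on $Y$ has both points on $Y$; the biseps occurring on $Y$ are pairwise disjoint, hence lie in \emph{disjoint maximal} polyseparators; and nonmaximal biseps are automatically non-right-hyperelliptic, so impose only residue conditions. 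With these in hand, the marking induced on $Y$ is exactly the disjoint-unimark-and-bimark configuration of the semicompact case, and the sufficiency argument of \cite{canonodal}, Proposition 6.10 applies verbatim — no extra induction and no local analysis near the branches $\Xi(\Pi)$ is needed. Your secondary induction on polyseparator degree, smoothing one node of $\Pi$ at a time inside $B^\sigma(\Theta)$, also works (versality and the regular parameters $t_i/t_j$ make the one-node smoothing available, and only residue conditions are at stake at polyseparator nodes), but it forces you to confront precisely the coordinate verifications you flag as obstacles, which the paper's regularity observations render unnecessary. The trade-off is that your argument makes the role of the stratified blowup more explicit, at the cost of extra bookkeeping.
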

\begin{proof}
Begin with some elementary remarks. First, $0'$ being regular
implies that any bisep that has at least 1 point on $Y$
actually has both points on $Y$ .
Moreover the various biseps occurring on $Y$ are disjoint,
hence are contained in disjoint maximal polyseparators.
Next, for any nonmaximal bisep $\vtheta$ ocurring on $Y$
(and having $Y$ to its left), $\vtheta$ is automatically
non-right-hyperelliptic, hence imposes no azimuthal condition
on $\lf{\vtheta}$.\par
Now to check necessity of the conditions defining
$|\omega|^\sep$, we can work as in the proof of Proposition 6.10 of \cite{canonodal}, smoothing out all but one sep or bisep on $Y$, in which case the condition at the remaining one
becomes obvious. This proves necessity. The proof of sufficiency is identical to the corresponding argument
in the proof of Proposition 6.10 of \cite{canonodal}.
\end{proof}
\newsubsection{Azimuthal Hilb}
\lbl{azimuthal-hilb}
Here we give the construction of an
azimuthal modification of the Hilbert scheme by
an appropriate stratified blowup. This is the construction we will really use.
It is related
to that of \S \ref{azi-blowup-sec}, but differs from it in being defined already over  $B$
itself. We shall work here with \emph{oriented} biseps and their left sides only,
but since each plain bisep will occurs twice with opposite orientations, the eventual
construction will be symmetric.
\subsubsection{The definition}
We fix the versal family $X/B$ as before.
Here we put together the per-binode
modifications of the Hilbert scheme described in \S
\ref{mod-hilb-sec}. Given an \emph{oriented} bisep
$\vtheta$, usually defined only locally over $B$,  let
\[ S_\vtheta= 
\lf{X}(\vtheta)\sbr 2.\]
which is a subset of $\sing(T)$, where $T$ is the pullback
on $X\sbr 2._B$ of the divisor $\delta_0(B)$ of generically
irreducible nodal curves. For a good family $X/B$ (cf. \S \ref{hilb-review}),
$T$ has normal crossings. Then let
\[ S=\bigcup\limits_\vtheta S_\vtheta ,\]
the union being over all oriented biseps $\vtheta$
(each unoriented bisep will appear twice, so both its sides
will appear). Then
$S$ is globally defined over $B$, and we
 define the \emph{azimuthal Hilbert scheme} as the stratified (or equivalently,  normal, see Proposition \ref{stratified-is-normal-prop}) blowup:
\eqspl{}{
X\sbc 2._B=B^\sigma_{S}(X\sbr 2._B)\stackrel{b}{\to} X\sbr 2._B.
}
Elements of $X\sbc 2._B$ will be referred to as 'azimuthal schemes.
By construction, $X\sbc 2._B$ dominates every $X\sbc 2._B(\vtheta)$ over the open set of $B$ where the latter
is defined, and also dominates the analogous blowup corresponding to any collection of binodes where
the collection is defined. Therefore an  azimuthal scheme, whose
 underlying 'plain' scheme is disjoint from all biseps, may be viewed as a scheme
with a collection of 
middle azimuths $\md{\zeta}(\vtheta)$, one for each
oriented bisep $\vtheta$ having $z$ on its left.
 These azimuths are not independent but
are subject to relations  of compatibility 'around maximal polyseparators'
(see the proof of Proposition \ref{stratified-is-normal-prop}).
\par
\subsubsection{Exceptional divisors} We aim to describe the exceptional locus of this blowup, first
locally over $B$. By the general description above, the components
of the exceptional divisor correspond to components of the various
strata (of different dimensions) of the stratification corresponding to $S$. Thus, consider a singular fibre $X_0$ and a maximal oriented polyseparator $\Theta_{\max}$ on $X_0$
and adjacent bisep in it \[\vtheta=(\theta_1, \theta_2)\subset\Theta_{\max}.\] Note that
$X_0$ and $\vtheta$ determine $\Theta_{\max}$. Then set
\[\quad Y_0=\lf{X_0}(\vtheta).\]

There is a codimension-2 boundary
locus $\del_\vtheta\subset B$ in a
neighborhood of $0$ over which $\vtheta$ is defined,
and $Y_0$ extends to a family
$Y=Y_\vtheta/\del_\vtheta=\lf{X_\vtheta}(\vtheta)$, in whose generic
member the opposite side $\rt{X_{\vtheta}}(\vtheta)$
is irreducible, as is $Y_\vtheta$. Note $Y_0$ extends as well as to smaller subfamilies
\[Y_{\vtheta,\Theta}=Y_\vtheta\times_{\del_\vtheta}\del_\Theta,\]
one for each $\Theta$ which is a
polyseparator on $X_0$ containing $\vtheta$ and contained in
$\Theta_{\max}$. The family $Y_{\vtheta,\Theta}$ does not depend on
$\Theta_{\max}$.
 It is locally defined on $X$ by the equations defining
$\lf{X}(\theta_1)$ and $\lf{X}(\theta_2)$ plus base equations
defining $\del_\theta, \forall \theta\in\Theta\setminus\vtheta$.
For a general fibre over $\del_\Theta$, $\Theta$ is a maximal
polyseparator and $\vtheta$ is adjacent on it.
 Each such family $Y_{\vtheta,\Theta}$ gives rise to
a divisor $\Xi_\vtheta(\Theta)$, equal to the proper transform of
$Y_{\vtheta, \Theta}\sbr 2.$ on $X\sbc 2._B$. By Lemma \ref{nbl} and Lemma \ref{azim-bundle-lem}, the restricted
blowdown map $\Xi_\vtheta(\Theta)\to Y_{\vtheta, \Theta}\sbr 2.$ is
a $W_{n-1}$-bundle, of the form \[\Xi_\vtheta(\Theta)=W_{Y_{\vtheta, \Theta}\sbr 2.}[\theta_1^\dag, \theta_2^\dag,
-\del_{\theta_3},...,-\del_{\theta_n}],\] where $\Theta=(\theta_1,...,\theta_n)$ is a cyclic arrangement and
\[\theta_i^\dag=\O(-\del_{\theta_i}+[2]_*( \lf{\theta_i})),\ 
i=1,2.\] 
In particular, there is a projection
\eqspl{Xi-projection}{p:\Xi_\vtheta(\Theta)\to \P_{Y_{\vtheta, \Theta}\sbr 2.}(\theta_1^\dag\oplus \theta_2^\dag\oplus
\O(-\del_{\theta_3})\oplus...\oplus\O(-\del_{\theta_n})).}
Let
\eqspl{Xi-bundle}{L_\vtheta(\Theta)=p^*(\O(1)).}
Also, if $\Theta'$ is any polyseparator between $\vtheta$ and $\Theta$, i.e.
 with $\vtheta\subset\Theta'\subset \Theta$, we have a map
 \[p_{\Theta'}:\Xi_\vtheta(\Theta)\overset{q_{\Theta'}}{\to} W_{Y_{\vtheta, \Theta}\sbr 2.}[\theta_1^\dag, \theta_2^\dag,
- \del_{\theta_i}:\theta_i\in\Theta']\to \P(\theta_1^\dag\oplus\theta_2^\dag\oplus
  -\del_{\theta_i}:\theta_i\in\Theta' )\]
 whence a line bundle
 \eqspl{Xi-bundle-prime}{
 L_{\vtheta}(\Theta, \Theta')=p_{\Theta'}^*(\O(1)).
 }
 Thus,
 \[L_{\vtheta}(\Theta, \Theta')=q_{\Theta'}^*(L_\vtheta(\Theta').\]

Also, we have locally
near $0$, \eqspl{Xi-vtheta}{\Xi_\vtheta:=b\inv(Y\sbr
2._{\del_\vtheta})=\bigcup\limits_{\vtheta\subset\Theta\subset\Theta_{\max}}
\Xi_\vtheta(\Theta).}
Because $\Xi_\vtheta$ is the pullback of $\lf{D}^\dag(\vtheta)$ from $X\sbc 2._B(\vtheta)$, we have by \eqref{D-vtheta-eq}
and \eqref{conormal-D-vtheta-eq} that
\eqspl{Xi-vtheta-conormal}{
\O_{\Xi_\vtheta}(-\Xi_\vtheta)=\O_{\O(\theta_1^\dag)\oplus\O(\theta_2^\dag)}(1).
}
Then consider, for any $n\geq 2$: \[\Xi(n)=\sum\limits_{\vtheta}\sum\limits
_{\substack{\vtheta\subset\Theta\\ |\Theta|=n}}\Xi_\vtheta(\Theta).\]
This is independent of any branch choices and extends
to a global divisor on $X\sbc 2._B$.\par
In terms of local equations, if $X/B$ is given
locally near $\theta_i$ by $x_iy_i=t_i$,
 where $y_1, y_2$ are local coordinates on
 $Y_0$ near $\rt{\theta_1}, \rt{\theta_2}$
 respectively, then  $Y_{\vtheta,\Theta}$ is defined
 near $(\theta_1, \theta_2)$ by $(x_1, x_2, t_i,i\in\Theta\setminus\vtheta)$. Therefore the
divisor $\sum\limits_{\Theta_1\supset\Theta}\Xi_\vtheta(\Theta_1)$ is locally defined
in various covering opens upstairs
in $X\sbc 2._B$ by the minimum of these coordinates,
in the sense of Remark \ref{min-coordinate} (the minimum varies from
one open to the other). This minimum also corresponds to a local generator of $L_\vtheta(\Theta)$, and
consequently the conormal bundle
\eqspl{conormal-Xi}{\O_{\Xi_\vtheta(\Theta)}(-\Xi_\vtheta(\Theta))=L_\vtheta(\Theta)\otimes\O_{\Xi_\vtheta(\Theta)}
(\sum\limits_{\substack{\Theta_1\\ {\Theta_1\supsetneqq\Theta}}}\Xi_\vtheta(\Theta_1)).}
\par
We summarize the above discussion as follows.
\begin{thm}\label{azi-hilb-thm}\begin{enumerate}\item
The map $X\sbc 2._B\to X\sbr 2._B$ is birational, with
exceptional divisor
$\Xi=\sum\limits_{n\geq 2}\Xi(n)$ which
decomposes locally as
\[\Xi=\sum\limits_{\Theta\ \mathrm{polyseparator\ }}\sum\limits_{\vtheta\subset\Theta\ \mathrm{adjacent}}
\Xi_\vtheta(\Theta).\]
\item $X\sbc 2._B$ is 'virtually smooth', i.e. smooth when $X/B$ is a good family.
    \item The boundary of $X\sbc 2._B/B$ is a divisor with local normal crossings.
\item Each $\Xi_\vtheta(\Theta), \Theta=(\theta_1,...,\theta_n)$, is a $W_{n-1}$-bundle fibration
induced by a birational base-change from a bundle of the form 
\[W[\theta_1^\dag,\theta_2^\dag,
-\delta_3,...,-\delta_n ]\to Y\sbr 2._{B_\Theta}\]

where $Y$ is locally a 2-inseparable component subfamily of $X_{B_\Theta}$
which is one side of the separating binode $\vtheta$, $\vtheta\subset
\Theta$, 
$-\delta_i$ is the ideal of the branch of the boundary 
corresponding to $\theta_i$ and $\theta_i^\dag=-\delta_i+\theta_i$.
\begin{flushleft}

\end{flushleft}\item The self-intersection of $\Xi_\vtheta(\Theta)$ is
 \eqspl{Xi-normal}{-c_1(L_\vtheta(\Theta))- (\sum\limits_{\substack{\Theta_1\supsetneqq\Theta\\ \vtheta\ \mathrm{adjacent\  on\ }  \Theta_1}}\Xi_\vtheta(\Theta_1)).\Xi_\vtheta(\Theta)
 }
 
 (see \eqref{Xi-bundle}).

\end{enumerate}
\end{thm}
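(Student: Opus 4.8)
\textbf{Proof proposal for Theorem \ref{azi-hilb-thm}.}

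The plan is to assemble the statement from the general structure theory of stratified/normal blowups (Lemma \ref{nbl}, Proposition \ref{stratified-is-normal-prop}) together with the per-binode analysis of \S\ref{mod-hilb-sec}--\S\ref{multi-bisep-sec}, since $X\sbc 2._B$ is by definition the $S$-stratified blowup of $X\sbr 2._B$ with $S=\bigcup_\vtheta S_\vtheta$, $S_\vtheta=\bigcup_\vtheta \lf{X}(\vtheta)\sbr2.$. First I would verify the hypotheses needed to apply Lemma \ref{nbl}: that $T$ (the pullback of $\delta_0(B)$ to $X\sbr 2._B$) has local normal crossings for a good family, which is recalled in \S\ref{hilb-review}, and that $S$ has the transitivity property (*) of \S\ref{stratified-blowup}. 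Transitivity for $S$ reduces to transitivity of the polyseparator relation among biseps, which is exactly the content of Lemma \ref{n-gon}(ii) and the cyclic-arrangement description of polyseparators: if $(\theta_1,\theta_2)$ and $(\theta_2,\theta_3)$ are biseps with a common side, then $(\theta_1,\theta_3)$ is a bisep. Once this is checked, Proposition \ref{stratified-is-normal-prop} identifies $X\sbc 2._B$ with the normal blowup $B^\perp_S(X\sbr 2._B)$, so Lemma \ref{nbl} applies directly and gives items (1), (3), (4): the map is birational with exceptional divisor $\bigcup_{i\ge 2}E(i)$, the reduced total transforms of the strata are normal-crossing, and hence so is the boundary of $X\sbc 2._B/B$ once one adds in the (already normal-crossing) non-$S$ boundary components. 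For the local decomposition $\Xi=\sum_\Theta\sum_{\vtheta\subset\Theta}\Xi_\vtheta(\Theta)$ I would match the strata $s_i(S)$ with the loci $Y_{\vtheta,\Theta}\sbr2._{\del_\Theta}$: a general point of the codimension-$i$ stratum lies on exactly the $\binom i2$ branches $\lf{X}(\theta_a,\theta_{a+1})\sbr2.$ coming from an adjacent pair inside a degree-$i$ polyseparator, by Lemma \ref{n-gon}, which is precisely the indexing in \eqref{Xi-vtheta} and the paragraph defining $\Xi(n)$.

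For item (2), virtual smoothness, I would invoke the smoothness statement in Lemma \ref{nbl}(i) for $B^\perp_S$ applied to $X\sbr 2._B$; the only point to address is that $X\sbr 2._B$ itself is smooth (equivalently, virtually smooth) near the relevant loci, which for a good family follows from the local-coordinate description of $X\sbr 2._B$ near a node in \S\ref{hilb-review} (coordinates $\sigma_1,\sigma_2,u$ plus boundary coordinates, the family being $xy=t$ with $t$ versally varying), together with the fact that the loci $\lf{X}(\vtheta)\sbr2.$ being blown up are smooth and meet with normal crossings after separating the branches of $T$. So (2) is essentially a citation of Lemma \ref{nbl}(i) once the ambient smoothness and the normal-crossing hypothesis on $T$ are in place.

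Item (4) is the identification of each $\Xi_\vtheta(\Theta)$ as a $W_{n-1}$-bundle. Here I would combine Lemma \ref{nbl}(ii), which says the normalized exceptional divisor over a codimension-$i$ stratum $T(i)^\norm$ is a $W_{i-1}=W[N_{T(i)^\norm\to Y}]$-bundle over the normal blowup of that stratum, with Lemma \ref{azim-bundle-lem}, which computes the relevant conormal/normal bundle on the Hilbert scheme in the per-binode case: $\check N_{(\lf X(\vtheta))\sbr2._{\del_\vtheta}/X\sbr2._B}\simeq[2]_*(\O(\lf\theta_1)\otimes\psi_{\theta_1})\oplus[2]_*(\O(\lf\theta_2)\otimes\psi_{\theta_2})$, i.e. the summands $\theta_1^\dag,\theta_2^\dag$ with $\theta_i^\dag=\delta_i\otimes\O_Y(-\lf\theta_i)$, while the remaining summands $\delta_3,\dots,\delta_n$ come from the base directions $\del_{\theta_j}$, $\theta_j\in\Theta\setminus\vtheta$ (whose conormal in $B$ is $\psi_{\theta_j}$ by Faber's formula, as used in Lemma \ref{conormal-sep-lem}). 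This gives the explicit form $W[\theta_1^\dag,\theta_2^\dag,\delta_3,\dots,\delta_n]$ and the projection \eqref{Xi-projection} defining $L_\vtheta(\Theta)$.

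Finally, for item (5), the self-intersection formula, I would use \eqref{conormal-Xi}, namely $\O_{\Xi_\vtheta(\Theta)}(-\Xi_\vtheta(\Theta))=L_\vtheta(\Theta)\otimes\O_{\Xi_\vtheta(\Theta)}(\sum_{\Theta_{\max}\supseteq\Theta_1\supsetneq\Theta}\Xi_\vtheta(\Theta_1))$, which was derived from the local-coordinate picture (the divisor $\sum_{\Theta_1\supseteq\Theta}\Xi_\vtheta(\Theta_1)$ is cut out locally by the "minimal coordinate" of Remark \ref{min-coordinate}, which is also a local generator of $L_\vtheta(\Theta)$); intersecting both sides with $[\Xi_\vtheta(\Theta)]$ and rearranging yields the claimed expression $-c_1(L_\vtheta(\Theta))+(\sum_{\Theta_{\max}\supseteq\Theta_1\supsetneq\Theta}\Xi_\vtheta(\Theta_1)).\Xi_\vtheta(\Theta)$. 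The main obstacle I anticipate is bookkeeping: making the identification between the abstract strata $s_i(S)$ and the geometrically labeled divisors $\Xi_\vtheta(\Theta)$ completely precise, in particular tracking which local branches of $T$ and of $S$ meet at a given multiple point and checking that the combinatorics of adjacent pairs in a cyclically arranged polyseparator (Lemma \ref{n-gon}) matches the $\binom i2$-branches-through-a-codimension-$i$-point count of Lemma \ref{nbl}; everything else is a direct application of the cited results.
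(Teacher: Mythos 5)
Your proposal is correct and follows essentially the same route as the paper: Theorem \ref{azi-hilb-thm} is stated there as a summary of the immediately preceding discussion, which likewise derives (1)--(3) from Lemma \ref{nbl} and Proposition \ref{stratified-is-normal-prop}, identifies the $W_{n-1}$-bundle structure in (4) by combining Lemma \ref{nbl}(ii) with the conormal computation of Lemma \ref{azim-bundle-lem}, and reads off (5) from the local-coordinate formula \eqref{conormal-Xi}. Your explicit check of the transitivity hypothesis (*) and of the ambient smoothness of $X\sbr 2._B$ are small elaborations the paper leaves implicit, not a different argument.
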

\begin{rem}\label{azi-hilb-rem}
To be precise, $\Xi_\vtheta(\Theta)$ is a fibration of the form $W[\vtheta; \Theta]:=W[\theta_1^\dag, \theta_2^\dag, -\delta_3,
...,-\delta_n]$
over the 'induced azimuthal blowup' of $Y\sbr 2._{B_\Theta}$ which is the normal blowup corresponding to
$\Theta_{\max}\setminus \Theta$. 
The exceptional divisor in this blowup is the sum of the divisors
$\Xi_\vtheta(\Theta_1).\Xi_\vtheta(\Theta)$ appearing in \ref{Xi-normal}. The divisor
$\Xi_\vtheta(\Theta_1).\Xi_\vtheta(\Theta)$ (on $\Xi_\vtheta(\Theta)$) is in itself a birational pullback of
 $W$-bundle of the form $W[\vtheta;\Theta]$ over a birational pullback of $Y\sbr 2._{B_{\Theta_1}}$.
 Note that two such divisors $\Xi_\vtheta(\Theta_1).\Xi_\vtheta(\Theta), \Xi_\vtheta(\Theta_2).\Xi_\vtheta(\Theta)$
 are disjoint unless $\Theta_1\subset\Theta_2$ or vice versa, because they are both proper transforms
 in a blowup of a smaller stratum corresponding to $\Theta_1\cup\Theta_2 $.
If $\Theta_1\subsetneqq\Theta_2$ then the intersection $\Xi_\vtheta(\Theta_1).\Xi_\vtheta(\Theta_2).\Xi_\vtheta(\Theta)$
is a fibre product $W[\vtheta, \Theta_1]\times W[\Theta_2\setminus\Theta_1]$ over a birational pullback of 
$Y\sbr 2._{B_{\Theta_2}}$.\par 
More generally an intersection $\bigcap\limits_{i=1}^m(\Xi_\vtheta(\Theta_i).\Xi_\vtheta(\Theta))$
is empty unless the polyseparator collection $\{\Theta_1, ...,\Theta_m\}$ is totally ordered under inclusion, and if
we label them so that $\Theta_1\subsetneqq...\subsetneqq\Theta_m$ then the intersection is a fibre product
\[W[\vtheta;\Theta_1]\times W[\Theta_2\setminus\Theta_1]\times...W[\Theta_m\setminus\Theta_{m-1}]\]
over a birational pullback
of $Y\sbr 2._{B_{\Theta\cup\Theta_1...\cup\Theta_m}}$.


\end{rem}
\begin{example}
For the universal family of curves over $\mgbar$, the choice of
$\vtheta$ and $\Theta$ corresponds to writing $g=g_1+...+g_n+1$ and looking at the boundary locus corresponding to a
cyclical arrangement of 2-pointed curves glued
cyclically along the marked points, so that
$(\lf{X}(\theta_i,\theta_{i+1}), \lf{\theta}_i, \lf{\theta}_{i+1})$ is the $i$-th marked curve.
In particular, $Y(\vtheta), \vtheta=(\theta_1, \theta_2)$
is then
the universal 2-pointed curve over
$\M_{g_1,2}$, pulled back over $\M_{g_1,2}\times
\M_{g_2, 2}\times...\times\M_{g_n,2}$ and $Bl_\vtheta(Y(\vtheta)\sbc 2._{B(\Theta)})$ is the appropriate
modification of its Hilbert scheme. We may identify $\Xi_\vtheta(\Theta)$ with the $W_{n-1}$-bundle
$W[\psi_1\otimes\O([2]_*\theta_1)\oplus\psi_2\otimes\O([2]_*\theta_2)\oplus\bigoplus\limits_{i=3}^n\psi_i]$ where $\psi_i=\lf{\psi}(\theta_i)\otimes
\rt{\psi}(\theta_i)$.
See Section \ref{w-bundles-sec} for more information on this.
\end{example}
\begin{example}
Consider the extremal case of  an $n$-dimensional family with
an isolated 'cyclic' fibre of the form $X_0=Y_1\cup...\cup Y_n$.
We get a polyseparator $\Theta_{\max}=(\theta_1,... ,\theta_n)$
and adjacent biseps $\vtheta_i=(\theta_i,\theta_{i+1})$
and the line bundles $ \del_{\theta_i}$ are all trivial. Hence
$\Xi_{\vtheta_i}(\Theta)$ is a $W_{n-1}$-bundle of the form
$W[L_i, L_{i+1}, \O,...,\O]$ where $L_i=[2]_*(\O(-\lf{\theta_i}))$.
For example, $W_2$ is the blowup of $\P^2$ in 3 points and the above
$W_2$-bundle is the blowup of $\P(L_i\oplus L_{i+1}\oplus\O)$ in
the 3 distinguished sections. Because $\Theta$ is maximal, the conormal bundle of $\Xi_{\vtheta_i}(\Theta)$
is just $L_{\vtheta_i}(\Theta)$.

\end{example}
\newsection{Azimuthal Brill Noether and sepcanonical system}
\label{azi-bn-sec}
In this section we will
state and prove our definitive result on
the modified \bn map and its relation to the sepcanonical system.\newsubsection{Azimuthal Hodge bundle}
To this end we will construct an echelon modification of the Hodge bundle
over the full azimuthal Hilbert scheme $X\sbc 2._B$.
First working locally near a singular fibre $X_0$,
assumed oriented, consider
the following collection of echelon data on $X\sbr 2._B$:
\begin{itemize}
\item $\chi(\theta)$, for all separating (relative) nodes $\theta$
that meet (i.e. occur on) $X_0$ (see \eqref{echelon-data-sep-node});
\item $\chi(\vtheta):=(\be^{0,-1}(\vtheta), \Xi_\vtheta)$,
where $\be^{0,-1}$ is as in \eqref{echelon-data-sep-binode}
and $\Xi_\vtheta$ is as in \eqref{Xi-vtheta} for all
oriented biseps $\vtheta$ meeting $X_0$ (each oriented bisep will occur twice in this list, with its two orientations).
\end{itemize}
By Theorem \ref{azi-hilb-thm},  $\sum\limits_\vtheta \Xi_\vtheta$ is a divisor with normal crossings. Therefore
these echelon data  are mutually transverse.
Moreover, the entire collection- though not its individual
members - is canonically and globally defined.
Therefore this collection is a collection of \emph{polyechelon data} in the sense of \cite{echelon}, \S 3.
Therefore, there is globally defined over
the azimuthal Hilbert scheme $X\sbc 2._B$
an associated modification which we call the \emph{azimuthal
Hodge bundle} $\ae$. It comes together with a map called the
\emph{azimuthal
Brill-Noether map} associated to $X/B$ :
\eqspl{azimuthal-bn}
{\aphi:\leftidx{_\a}\bE\to\Lambda_2(\omega).
}
\newsubsection{Comparison}
The first order of business with $\aphi$ is to derive for it
a comparison diagram analogous to \eqref{interp-1node} and \eqref{interp-binode}. This diagram is local, and
depends on a choice of fibre of $X/B$ as well as a
2-component of it.\par
Thus, let $X_0$ be a fibre and $Y$ a 2-component of $X_0$, i.e. a connected component of the blowup of $X_0$ in
the collection $\Theta$ of all seps
and biseps (maximal or not, on or off $Y$). We assume
 $X_0$ is oriented so that $Y$ is to the left of each sep $\theta$ and bisep $\vtheta$.
Set
\[X\sbc 2.(Y)=(X\sbc 2._B\times_B\prod\limits_{\mathrm {bisep\ }\vtheta} (X_{L})\sbr 2._{B( \vtheta)})'\]
where as usual ' refers to the unique component
 of the fibre product over $B$ which dominates $B$.
 This is an analogue of the space denoted $Y$ in
 \eqref{extended-azimuthal-eq}.
 Then $X\sbc 2.(Y)$ admits a natural map to $B^\perp$,
 the normal blowup of $B$ associated to the family of $B(\vtheta)$; as well as to
the various factors, via which we may pull back various objects defined on these factors.
Set
\eqspl{}{ \lf{D}^\dag_1(Y)=\sum\limits_{
 \mathrm{seps\ } \theta}\lf{D}^\dag(\theta),\ 
 \lf{D}^\dag_2(Y)=\sum\limits_{
 \mathrm{\ biseps\ } \vtheta}\lf{D}^\dag(\vtheta),\\
 \lf{X}_1(Y)=
\sum\limits_{\mathrm{seps\ } \theta}\lf{X}(\theta), \ 
 \lf{X}_2(Y)=\sum\limits_{\mathrm{biseps\ }\vtheta}\lf{X}_P(\vtheta).
}
We let $Y\sbc 2.\subset X\sbc 2._B$
 denote the inverse image of $Y\sbr 2.\subset X\sbr 2._B$.
Thus, an interior element $z$ of $Y\sbc 2.$ consists
of an interior subscheme of $Y$ together with
a collection of middle azimuths at all biseps on $X_0$
(on or off  $Y$).
In particular, $z$ induces on $X_0$ a structure of azimuthal curve.
We have, for each sep $\theta$,
\[Y\sbc 2.\subset \lf{D}^\dag(\theta).\]
Note that the map $X\sbc 2.(Y)\to X\sbc 2._B$ is an isomorphism over the locus of 'interior' schemes, i.e. those disjoint from all biseps.
Now set
\eqspl{}{ \be^Y=(\pi_*(\omega(-2\lf{X}_1(Y)-\lf{X}_2(Y))))_{X\sbc 2.(Y)}.}
Then we get a comparison diagram analogous to
\eqref{interp-binode}:
\eqspl{comparison-general-eq}{
\mycd{\be^{Y}(2\lf{D}^\dag_1(Y)+\lf{D}^\dag_2(Y))}{\ae}
{\Lambda_2(\omega(-2\lf{X}_1(Y)-\lf{X}_2(Y)))(2\lf{D}^\dag_1(Y)+
\lf{D}^\dag_2(Y))}
{\Lambda_2(\omega).}  }
Again the left column is up to an isomorphism (namely
a twist) the \bn map associated to the bundle
$\omega(-2\lf{X}_1(Y)-\lf{X}_2(Y))$.
We will be using this diagram mainly over $Y\sbc 2.$.
A few remarks are in order.
\begin{enumerate}
\item If $\theta$ is a *-sep (i.e. sep or bisep) disjoint from $Y$, with
associated divisor $\lf{D}^\dag(\theta)$, which sits
over a divisor $\del_\theta\subset B^\perp$, then
\[\lf{D}^\dag(\theta)|_{Y\sbc 2.}=\pi^*(\del_\theta)|_{Y\sbc 2.}\]
and similarly for $\lf{X}_P(\theta)$ etc.
It follows that the left column of \eqref{comparison-general-eq} is equivalent to a map
where $\lf{D}_i^\dag(Y), \lf{X}_i(Y)$ are replaced by
the sum over those *-seps that meet $Y$.
\item The bottom arrow of \eqref{comparison-general-eq}
is an isomorphism over the interior of $Y\sbc 2.$.
\item Given a *-sep $\theta$ on $Y$, $\theta$ is right
hyperelliptic on $X_0$ if and only if
$(\rt{X}(\theta), \rt{\theta})$ is hyperelliptic as a pair; this has the consequence,
nontrivial only if $\theta$ is a bisep,
that $\rt{\theta}$ is contained in
a single 2-component of $\rt{X}(\theta)$, so
that any 2-component of $\rt{X}(\theta)$ is also
a 2-component of $X_0$.
\item Given an interior scheme $z$ on $Y$,
it induces
azimuthal data $\md{\zeta_\vtheta}$ on all the
 biseps $\vtheta\in \Theta$ and hence endows
$Y$ with the restriction of the
sepcanonical system $|\omega|_\Theta^\sep$ from $X_0$, and the image of the associated \bn map coincides with that
of the azimuthal \bn map $\aphi$ over $z$.
\end{enumerate}
\newsubsection{Main results}
We recall that a 2-inseparable stable curve is hyperelliptic if it is a 2-1 cover of $\P^1$
and that such curves are classified, e.g. in \cite{canonodal}, \S 2. A stable hyperelliptic curve
is a tree-like arrangement of 2-inseparable hyperelliptics, or equivalently, an admissible 2-1
cover of a rational tree. A semistable curve is hyperelliptic provided its
stable contraction is. We now proceed to state our main results.
First
some notation. Denote by $\he\sbc 2._B$ the closure of the
inverse image in $X\sbc 2._B$ of the hyperelliptic pair
locus $\he^2_{B^o}$ over the set of smooth curves in $B$.
Denote by $\rR^\sep(X/B)$ the inverse image in $X\sbc 2._B$ of
the locus of schemes supported on separating nodes
of $X/B$. It has codimension 2 in $X\sbc 2._B$.
The following result extends Proposition \ref{mod-bn-sepcan-disjoint} and Corollary \ref{semicompact-type-cor} from the semicompact to the
general case.
\begin{thm}
\lbl{main-thm}(i)\quad
We have a  schematic equality
\eqspl{degeneracy-aphi}{\bD_2(\aphi)=\he\sbc 2._B \cup \rR^\sep(X/B).}\par
Moreover the fibre of $\he\sbc 2._B$ corresponding to a boundary fibre $X_0$ of $X/B$ is empty if $X_0$ is not
hyperelliptic; if $X_0$ is hyperelliptic, the fibre consists
of the locus of hyperelliptic schemes contained in some
2-component, each endowed with the collection of hyperelliptic
middle azimuths on $X_0$.\par
(ii)\quad For each fibre $X_0$ and each azimuthal subscheme $z$
contained in a 2-component of $X_0$ but not supported at a node, the
image of $\aphi(z)$ coincides with that of the Brill-Noether map
associated to the sepcanonical system on $X_0$
compatible with $z$.
\end{thm}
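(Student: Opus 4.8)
\textbf{Proof plan for Theorem \ref{main-thm}.}

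The plan is to reduce the general (non-semicompact) case to a local, fibrewise analysis, using the comparison diagram \eqref{comparison-general-eq} together with Proposition \ref{stratified-sepcanonical-prop} to identify the relevant linear systems, and then to run the same dimension-count bookkeeping that worked in the semicompact case (Corollaries \ref{compact-type-cor} and \ref{semicompact-type-cor}), with the polyseparator contributions now accounted for by Theorem \ref{azi-hilb-thm}. Since $\aphi$ is a map of vector bundles, its degeneracy scheme may be studied fibre by fibre over $B$; and since the azimuthal Hilbert scheme $X\sbc 2._B$ dominates each per-binode and per-collection blowup, it suffices to check the statement in local analytic charts near each stratum of the boundary of $X_0\sbc 2.$. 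First I would fix a boundary fibre $X_0$, choose a $Y$-compatible orientation for each $2$-component $Y$, and invoke the comparison diagram \eqref{comparison-general-eq} over the inverse image $Y\sbc 2.$: the left column there is, up to a harmless twist, the ordinary \bn map of $\omega(-2\lf{X}_1(Y)-\lf{X}_2(Y))$, and by remarks (1)--(4) following \eqref{comparison-general-eq} this restricts, on a fixed fibre endowed with the middle azimuths $\md{\zeta}_\bullet$ carried by an interior scheme $z$, to the \bn map of the $\Theta$-sepcanonical system $|\omega_{X_0}|^\sep_\Theta$ restricted to $Y$. This is exactly where Proposition \ref{stratified-sepcanonical-prop} (the polyseparator strengthening of \cite{canonodal}, Prop.~6.10) enters: it guarantees that the image of the left vertical arrow of \eqref{comparison-general-eq} over $Y$ is precisely the sepcanonical system, including the azimuthal and residue constraints forced around maximal polyseparators. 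Combined with the fact (remark (2)) that the bottom arrow of \eqref{comparison-general-eq} is an isomorphism over the interior of $Y\sbc 2.$, this gives part (ii) for interior schemes, and a direct inspection of the local normal forms \eqref{modif-bn-matrix-hypell}--\eqref{modif-bn-matrix-rt-non-he} (now in the fibre $W_{n-1}$-bundle coordinates of Theorem \ref{azi-hilb-thm}(iv), using the minimal-coordinate device of Remark \ref{min-coordinate}) extends it to schemes meeting the nodes and to the singular azimuths, as in \S\ref{single-binode-interp-sec}.

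For part (i), the first step is to observe that \eqref{I_2-lambda-theta} and the analogous local computations near each $\Xi_\vtheta(\Theta)$ show that the only way $\aphi$ can drop rank over a boundary point lying strictly to one side of all seps and biseps is via hyperellipticity: by Theorem 7.2 of \cite{canonodal}, the $\Theta$-sepcanonical system on each $2$-component is essentially very ample unless $X_0$ is hyperelliptic, so over a non-hyperelliptic fibre $\bD_2(\aphi)$ meets $X_0\sbc 2.$ only in $\bigcup_\theta R_\theta$ (the seps) — this is the content of $\rR^\sep$. When $X_0$ is hyperelliptic, every sep and bisep is bilaterally hyperelliptic, and the local matrix \eqref{modif-bn-matrix-hypell} (read in the azimuthal chart) shows the drop-rank locus on a side is exactly the hyperelliptic pencil on that side carrying the hyperelliptic middle azimuth at each bisep of $X_0$ — i.e. the fibre of $\he\sbc 2._B$ as claimed. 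The second, more delicate, step is the \emph{schematic} equality and the fact that the boundary locus $\bD_2(\aphi)\cap\pi\inv(\del)$ contributes no \emph{extra} component beyond $\he\sbc 2._B\cup\rR^\sep$: here one runs the codimension count of \S\ref{bisep-modifying-bn-sec}. For a maximal polyseparator $\Theta=(\theta_1,\dots,\theta_n)$ the locus of azimuthal schemes on which $\aphi$ degenerates over $\Xi_\vtheta(\Theta)$ has codimension at least $(\lf g-1)+(\rt g-1)+n+1$ after using that $\Xi_\vtheta(\Theta)$ is a $W_{n-1}$-bundle and the hyperelliptic loci on the two 2-inseparable sides of $\vtheta$ have codimensions $\lf g-1$, $\rt g-1$; since $n\geq 2$ this exceeds the expected codimension $g-1$ of $\bD_2(\aphi)$, so no new component survives. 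Induction on the number of irreducible components of $X_0$ — peeling off an extremal $2$-component as in the proof of Theorem \ref{sepcanonical-thm} and Corollary \ref{semicompact-type-cor}, using Corollary \ref{mod-bn-sep-other-pt-cor} to keep the relevant hyperelliptic loci regular in the induced versal family — then upgrades this to the schematic equality \eqref{degeneracy-aphi}, with $\he\sbc 2._B$ regular of codimension $g-1$.

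I would organize the writeup as: (a) set up $X\sbc 2.(Y)$, the orientation conventions, and the comparison diagram; (b) prove (ii) for interior schemes via Proposition \ref{stratified-sepcanonical-prop}, then for the remaining schemes by local normal-form inspection; (c) prove the set-theoretic part of (i) using Theorem 7.2 of \cite{canonodal} and the hyperelliptic normal forms; (d) do the polyseparator codimension count to rule out excess boundary components; (e) conclude schematic equality and regularity by induction on components. The main obstacle I anticipate is step (d)/(e): keeping track of the correct conormal bundles and multiplicities along the exceptional divisors $\Xi_\vtheta(\Theta)$ (formulas \eqref{Xi-vtheta-conormal}, \eqref{conormal-Xi}) while verifying that the per-stratum degeneracy locus genuinely has the claimed excess codimension, and that these local pieces glue to give a \emph{scheme-theoretically} reduced intersection transverse to the diagonal — in other words, that the echelon modification has not introduced spurious nilpotents. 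The per-binode case \S\ref{bisep-modifying-bn-sec} and the all-seps case \S\ref{multi-sep-sec} provide the template, but the bookkeeping around non-maximal polyseparators (where, by Proposition \ref{stratified-sepcanonical-prop}, the bisep is automatically non-right-hyperelliptic and so imposes no azimuthal condition) needs to be handled carefully so that the local pictures are mutually consistent.
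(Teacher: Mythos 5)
Your proposal is correct and follows essentially the same route as the paper: comparison diagrams identifying $\aphi$ with the sepcanonical Brill--Noether map on each $2$-component, the very-ampleness results of \cite{canonodal} to force surjectivity off $\rR^\sep$ for non-hyperelliptic fibres, local normal forms for the scheme structure, and induction on the number of components. The one shortcut you miss is that the paper dispatches the hyperelliptic case at the outset by observing that a hyperelliptic stable curve is automatically of semicompact type, so that case reduces immediately to Corollary \ref{semicompact-type-cor} rather than requiring a fresh analysis of the matrix \eqref{modif-bn-matrix-hypell} in polyseparator charts.
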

\begin{proof}
We begin with the observation that both assertions are local on
$X\sbc 2._B$; the second can be checked fibre by fibre, though the
second does involve scheme structure over $B$. We follow the broad
outline of the proof of Theorem 7.5 of \cite{canonodal}. We will prove both
assertions  using simultaneous induction on the number of
components of a fibre $X_0$.  \par
Next, note that if the fibre $X_0$ is hyperelliptic, it is of
semicompact type and then our assertions already follow from
the results of Part 1, specifically Proposition \ref{mod-bn-sepcan-disjoint} and Corollary \ref{semicompact-type-cor}. Therefore, we may henceforth
assume that $X_0$ is non-hyperelliptic.\par
 We will focus next on proving that
if $X_0$ is non-hyperelliptic as 'plain' curve, ignoring 
any azimuthal data, then, over a neighborhood of
the corresponding point $0\in B$, the degeneracy scheme
$\bD_2(\aphi)$ is contained in
$\rR^\sep(X/B)$. The opposite containment
is obvious by generalizing to a nearby fibre of
compact type with the same set of seps as $X_0$ (and
no other singularities).
 By the results of \S 1, we may assume $X_0$ is 2-separable,
 i.e. contains a sep or bisep.
Suppose we are given an azimuthal scheme $z$ of $X_0$.
 We will show that the azimuthal \bn image $\im(\aphi(z))$
 is 2-dimensional. To this end we will need to distinguish
 a hierarchy of cases and subcases. We suppose next that there exists a sep or bisep $\theta$, so that $z$ is
left of $\theta$ and, for now, also that $z$
 (more precisely, the underlying scheme) is disjoint from $\theta$.
Let $ \lf{D^\dag}=\lf{X}\sbc
2._{\del_\theta}$ be the associated divisor, containing all
azimuthal schemes left of $\theta$.
Let $\be'\subset\aE$ be
the subsheaf corresponding to all the echelon modifications except
at $\theta$. Let ${\be'}^{\ .,0}$ denote the bundle obtained by
applying similar modifications on $\be^{.,0}$. Then we get a
comparison diagram analogous to \ref{interp-1node}:

\eqspl{interp-sep-azimuthalBN}{
\begin{matrix}
{\be'}^{-n,0}(n\lf{D^\dag})&\to& \aE\\
\downarrow&&\downarrow\\
 \Lambda_2(\omega(-n\lx))(n\lf{D^\dag})&\to&\Lambda_2(\omega)
\end{matrix}
} where $n=2$ (sep case) or $n=1$ (bisep case). The left column
restricts, up to a twist, to the azimuthal Brill-Noether mapping of
$\lf{X}.$\par
Because $X_0$
is non-hyperelliptic,
at least one of the pairs $(\lf{X}(\theta), \lf{\theta}), (\rt{X}(\theta), \rt{\theta})$ is non- hyperelliptic. We first
analyze the former case (the two are distinct in that
$\lf{X}(\theta)$ is the side containing $z$). Within this case,
the worst-case scenario
is that $\lf{X}(\theta)$ (as plain curve)
and $(\rt{X}(\theta), \rt{\theta})$ (as pair)
are hyperelliptic. In particular,
$\lf{X}(\theta)$ and $\rt{X}(\theta)$ are of
 semicompact type. Then, because the pair
$(\lf{X}(\theta), \lf{\theta})$ is \nhep, either
$\lf{\theta}$ (bisep case) or $2\lf{\theta}$
(sep case)  is
a (degree-2) nonhyperelliptic divisor. If $z$ is not contained
in any single 2-component of $\lf{X}(\theta)$, it is easy
to see that the canonical system of $\lf{X}(\theta)$ already
separates $z$, hence $\aphi$ is surjective over $z$ (this is
independent of any azimuthal data). Thus
 we may assume $z$ is contained in a 2-component $Y$ of $\lf{X}(\theta)$. If $Y$ is the 'right-extremal' 2-component,
 i.e. the one that contains
$\lf{\theta}$, then by the above comparison diagram,
the azimuthal \bn image $\im(\aphi(z))$
contains the \bn image of $\omega_Y(\lf{\theta})$
(bisep case) or $\omega_Y(\lf{2\theta})$ over $z$,
which is already onto by Lemma 3.8 of \cite{canonodal}. If $z$ is not
contained in the
extremal 2-component, then a 2-component $Y$
 containing $z$  contains $\lf{\theta'}$
for some sep or bisep of $\lf{X}(\theta)$
(necessarily $\neq \theta$),
so that $\rt{X}(\theta')$ is non-
hyperelliptic and then, again by
 using the comparison diagram, the \bn image over $z$ contains that
of $\omega_Y(3\lf{\theta'})$ (sep case) or $\omega_Y(2\lf{\theta'})$ which is onto by Lemma 3.1 of \cite{canonodal}. This concludes the case where $z$ is contained in a \nhep side
of $\theta$.\par
At this point, we may assume that for any sep or maximal bisep
$\theta$ having $z$ on one side, say left,
the pair $(\lf{X}(\theta), \lf{\theta})$ is hyperelliptic. Suppose such $\theta$ actually
exists. Then if $z$ is not contained in any 2-component
of $\lf{X}(\theta)$, we can conclude as above. Else, our
assumption implies that $z$ must be
contained in a left-extremal 2-component  $Y$, of $\lf{X}(\theta)$, i.e. on that has the form
$Y=\lf{X}(\theta')$ for some sep or bisep $\theta'$
(possibly $=\theta$) (otherwise, $z\subset Y\subset \rt{X}(\theta")$
for some $\theta"$ and $\rt{X}(\theta")$ cannot be hyperelliptic). Therefore, we may as well assume
$Y=\lf{X}(\theta)$. Thus $(Y, \theta)$ is
2-inseparable hyperelliptic, i.e.  either $\theta$ (bisep case)
or $2\theta$ (sep case) is a member of the $g^1_2$.
Then by  Lemma 3.1 of \cite{canonodal}, the system
$|\omega_Y(3\lf{\theta})|$ (sep case)
 or $|\omega_Y(2\lf{\theta})|$
 (bisep case) is very ample, therefore the
left column of \eqref{interp-sep-azimuthalBN} is surjective over $\lf{D}^\dag$. Therefore $\aphi$ is surjective over the
interior of $\lf{D^\dag}$ and hence over $z$.
\par It remains to consider the case where $z$ is not on one side of any sep or maximal bisep. This implies $z=(z_1, z_2),
z_i\in Y_i$ where $Y_1, Y_2$ are extremal vertices in the
2-separation tree $G_2(X_0)$. But then because $g(Y_i)>0$,
$z_1, z_2$ are already separated by the canonical system of $X_0$, and in particular $\aphi(z)$ is surjective. \par

This completes that the proof that if $X_0$ is
non-hyperelliptic then assertion (ii) holds for $X_0$ and the
degeneracy locus of $\aphi$ on $(X_0)\sbc 2.$ is supported on,
hence coincides with,
$\rR^\sep(X_0)$. Then a computation as in \S \ref{sep-local-forms-sec} (which is valid locally over $B$) shows that for $X_0$ non-hyperelliptic,  the locus $\bD_2(\aphi)$ coincides schematically with
$\rR^\sep(X/B)$ over a neighborhood of $0\in B$.
This completes the proof of Theorem \ref{main-thm}.

\end{proof}
Now a dimension count similar to the proof of Proposition
\ref{single-sep-prop} shows the following result,
which also follows from the Harris-Mumford theory of admissibe covers:
\begin{cor}
A stable curve is hyperelliptic if and only if it is the limit of smooth
hyperelliptic curves.
\end{cor}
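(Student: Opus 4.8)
The statement to be proved is the Corollary: a stable curve $X_0$ is hyperelliptic (in the sense of the iterated definition, i.e. every sep and bisep is locally bilaterally hyperelliptic) if and only if it is a limit of smooth hyperelliptic curves. The plan is to extract this from Theorem \ref{main-thm} together with a dimension count paralleling the proof of Proposition \ref{single-sep-prop}. First I would embed $X_0$ as the central fibre of a versal (good) family $X/B$ over a smooth base $B$; this is possible after a base change, and by the remarks of \S \ref{hilb-review} and \S \ref{azimuthal-hilb} we obtain the azimuthal Hilbert scheme $X\sbc 2._B$ (virtually smooth) and the azimuthal Brill-Noether map $\aphi:\ae\to\Lambda_2(\omega)$, whose degeneracy scheme is described by \eqref{degeneracy-aphi}: $\bD_2(\aphi)=\he\sbc 2._B\cup\rR^\sep(X/B)$, with $\he\sbc 2._B$ the closure of the inverse image over the smooth locus $B^o$ of the smooth hyperelliptic pair locus.

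\textbf{The ``if'' direction.} Suppose $X_0$ is a limit of smooth hyperelliptics, i.e. $0\in B$ lies in the closure of $\he_{B^o}$ (we may arrange the family so that some nearby fibres are smooth hyperelliptic). Then the fibre of $\he\sbc 2._B$ over $0$ is nonempty. On the other hand, part (i) of Theorem \ref{main-thm} asserts that the fibre of $\he\sbc 2._B$ over a non-hyperelliptic $X_0$ is \emph{empty}. Hence $X_0$ must be hyperelliptic. (One should note here that $\rR^\sep(X/B)$ does not interfere: $\rR^\sep$ consists of azimuthal schemes supported at separating nodes, which exist over every fibre with a sep regardless of hyperellipticity, and so its fibre over $0$ being nonempty carries no information; the point is precisely that the \emph{other} component $\he\sbc 2._B$ of the degeneracy locus is empty over non-hyperelliptic fibres.)

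\textbf{The ``only if'' direction.} Suppose $X_0$ is hyperelliptic. Then $X_0$ is of semicompact type (every bisep is maximal, by Proposition-definition \ref{2sep-tree-gen-case-defn}), and the results of Part 1, in particular the construction of $\he\sbc 2._B$ as the closure of $\he^2_{B^o}$ together with the per-sep and per-bisep local analyses of \S \ref{sep-local-forms-sec} and \S \ref{bisep-modifying-bn-sec}, already show that $\he\sbc 2._B$ dominates $B$ over a neighborhood of $0$ and that its fibre over $0$ is exactly the locus of hyperelliptic azimuthal schemes on the 2-components of $X_0$ (nonempty, since each 2-component $(Y,\xi)$ is a 2-inseparable hyperelliptic marked curve carrying a $g^1_2$, by Lemma \ref{composite-canonical-hypell-lem} and the discussion of the sepcanonical $g^1_2$). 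In particular $0$ lies in the closure of $\he_{B^o}$, i.e. $X_0$ is a flat limit of smooth hyperelliptic curves. Concretely: pick a point $z$ of the fibre of $\he\sbc 2._B$ over $0$; since $\he\sbc 2._B$ is (by the codimension count) of pure codimension $g-1$ and irreducible components of it all meet $B^o$, the component through $z$ meets $X\sbc 2._{B^o}$, hence a general point of it lies over a smooth hyperelliptic curve, and specializing along that component realizes $X_0$ as the limit.

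\textbf{Main obstacle.} The genuinely delicate point is already encapsulated in Theorem \ref{main-thm}(i): that $\he\sbc 2._B$, defined as a \emph{closure} of something over the smooth locus, has empty fibre over a non-hyperelliptic boundary point — equivalently, that no component of $\he^2_{B^o}$ acquires an unexpected limit over a non-hyperelliptic $X_0$. In the present writeup this is exactly what Theorem \ref{main-thm} delivers (via the essential-very-ampleness of the sepcanonical system, Theorem \ref{sepcanonical-thm}, transported through the comparison diagram \eqref{comparison-general-eq} to show $\aphi$ has maximal rank off $\rR^\sep$ over non-hyperelliptic fibres). So the Corollary's proof itself is short; I would simply assemble the two implications above and add the sentence that the dimension count of Proposition \ref{single-sep-prop} (pure codimension $g-1$, every component meeting $B^o$) is what licenses the ``limit'' conclusion in the hyperelliptic case. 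If one wanted a self-contained dimension bookkeeping, the only thing to check is that the hyperelliptic-azimuthal-scheme locus on a boundary fibre has codimension matching $g-1$ inside $X\sbc 2._B$, which follows from Corollary \ref{compact-type-cor}/\ref{semicompact-type-cor} applied to the all-seps-and-biseps collection $\Theta$ on $X_0$.
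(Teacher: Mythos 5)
Your proposal is correct and follows essentially the same route as the paper: the corollary is read off from Theorem \ref{main-thm} together with the dimension count of Proposition \ref{single-sep-prop} (every component of the degeneracy locus away from $\rR^\sep$ has codimension at most $g-1$, while its boundary portion has codimension $g$, so it must be the closure of the interior hyperelliptic locus). The only blemish is the phrase ``$\he\sbc 2._B$ dominates $B$,'' which should read that it surjects onto a neighborhood of $0$ in the closed hyperelliptic locus of $B$; the argument itself is unaffected.
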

\begin{cor}
Notations as above, assume $X/B$ admits a section $s$ 
disjoint from the singular locus of $X$, and let
$Y_s\subset X\sbc 2._B$ be the inverse image of the locus of
schemes meeting $s(B)$. Then
\eqspl{}{
[\bD_2(\aphi)]\cap [Y_s]=[\he\sbc 2._B]\cap [Y_s]
}
\end{cor}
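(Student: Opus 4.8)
The plan is to combine the scheme-theoretic description of the degeneracy locus in Theorem~\ref{main-thm}(i) with the elementary fact that a section meets the fibres in smooth points. Recall that $\aphi\colon\aE\to\Lambda_2(\omega)$ is a map from a bundle of rank $g$ to one of rank $2$, so $\bD_2(\aphi)$ has expected codimension $g-1$ and the class $[\bD_2(\aphi)]$ in the statement is the corresponding Thom--Porteous polynomial in the Chern classes of $\aE$ and $\Lambda_2(\omega)$. By Theorem~\ref{main-thm}(i) we have the scheme-theoretic equality $\bD_2(\aphi)=\he\sbc 2._B\cup\rR^\sep(X/B)$, where $\he\sbc 2._B$ is of pure codimension $g-1$ (its boundary part lies over the hyperelliptic locus of $B$, of codimension $g-2$, with one-dimensional fibres by Theorem~\ref{main-thm}), while the residual piece $\rR^\sep(X/B)$, the locus of azimuthal schemes supported at a separating node, has codimension $2$. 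The point is that cutting with $Y_s$ removes the residual piece entirely.

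First I would note that $Y_s$ is disjoint from $\rR^\sep(X/B)$: the section $s$ meets each fibre of $X/B$ in a smooth point, hence $s(B)$ avoids the separating nodes, so no azimuthal scheme supported at a separating node can meet $s(B)$. Since degeneracy loci are compatible with pullback, this gives $\bD_2(\aphi|_{Y_s})=\bD_2(\aphi)\cap Y_s=\he\sbc 2._B\cap Y_s$ as subschemes of $Y_s$. Next I would verify that $Y_s$ meets $\he\sbc 2._B$ properly and generically transversally, so that $\he\sbc 2._B\cap Y_s$ is pure of codimension $g-1$ in $Y_s$, generically reduced, and represents $[\he\sbc 2._B]\cap[Y_s]$. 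On the interior this is immediate: over a smooth hyperelliptic fibre $C$ with $p=s(b)$ the fibre of $\he\sbc 2._B$ is the target $\P^1$ of the hyperelliptic pencil, and $Y_s$ cuts it in the single point represented by the pencil-fibre through $p$; a local computation in the coordinates $\sigma_1,\sigma_2$ shows the intersection is transverse (including the Weierstrass case, where the only potential obstruction lies). The boundary components are treated the same way, using the local normal forms \eqref{bn-rel-he2} for $\aphi$ and their boundary analogues appearing in the proof of Theorem~\ref{main-thm}. In particular $\aphi|_{Y_s}$ has degeneracy locus of the expected codimension $g-1$, so Porteous' theorem applies to it and identifies $[\bD_2(\aphi|_{Y_s})]$ with the fundamental class of $\he\sbc 2._B\cap Y_s$.

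Finally I would assemble these. Writing $i\colon Y_s\hookrightarrow X\sbc 2._B$ for the inclusion of this Cartier divisor, compatibility of the Thom--Porteous class with pullback gives $i^*[\bD_2(\aphi)]=[\bD_2(\aphi|_{Y_s})]$, and the projection formula gives $i_*i^*[\bD_2(\aphi)]=[\bD_2(\aphi)]\cap[Y_s]$. Hence
\[
[\bD_2(\aphi)]\cap[Y_s]=i_*\bigl[\he\sbc 2._B\cap Y_s\bigr]=[\he\sbc 2._B]\cap[Y_s].
\]
The only genuinely non-formal ingredient is the transversality and multiplicity-one verification in the middle paragraph; everything else is bookkeeping with Theorem~\ref{main-thm}(i) and the observation that a marked point avoids the nodes, which is exactly what makes the excess component $\rR^\sep(X/B)$ invisible after capping with $[Y_s]$.
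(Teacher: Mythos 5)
Your proposal is correct, and its essential ingredient is exactly the paper's: the section avoids the nodes, so $Y_s$ is disjoint from $\rR^\sep(X/B)$. But the paper stops there. Its reading of the statement is via the excess-intersection decomposition already in place: by Proposition \ref{excess-porteous-prop} the Porteous class splits as $[\bD_2(\aphi)]=[\he\sbc 2._B]+[\rR^\sep]_\vir$ with the second summand a class \emph{supported on} $\rR^\sep(X/B)$, and capping a class supported on $\rR^\sep$ with $[Y_s]$ gives zero by disjointness — full stop. No statement about how $Y_s$ meets $\he\sbc 2._B$ is needed, because $[\he\sbc 2._B]\cap[Y_s]$ is a refined intersection product, not a set-theoretic one. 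Your route instead restricts $\aphi$ to $Y_s$, re-applies Porteous there, and pushes forward; this is valid, but it is what forces you to verify that $Y_s$ meets $\he\sbc 2._B$ properly and with multiplicity one (so that $i_*$ of the Porteous class of $\aphi|_{Y_s}$ really is $[\he\sbc 2._B]\cap[Y_s]$), a verification you only sketch ("a local computation in $\sigma_1,\sigma_2$...", "the boundary components are treated the same way") and which is genuinely the hardest part of your argument while being entirely avoidable. What your approach buys is a concrete identification of the cycle $\bD_2(\aphi|_{Y_s})=\he\sbc 2._B\cap Y_s$ on $Y_s$ itself, which is more information than the class identity; what it costs is the transversality analysis, including at Weierstrass points and over the boundary, that the paper's one-line argument sidesteps. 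If you keep your route, the middle paragraph needs to be carried out in detail; if you only want the stated class identity, drop it and argue as the paper does.
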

\begin{proof}
Immediate from the fact that $s(B)$ is disjoint from the
singular locus of $X/B$, i.e. the locus of singular points of fibres,
hence $Y_s$ is disjoint from $R^\sep(X/B)$.
\end{proof}
Various sheaves associated with degeneracy loci admit resolutions by Eagon-Northcott-type complexes (see \cite{rkl-positivity}, I.B.2 or \cite{eisenbud}). The simplest of these yields
\begin{cor}
We have, modulo $R^\sep(X/B)$:
\eqspl{}{
[\O_{\he\sbc 2._B}]=c(\ae\otimes\Lambda_2(\omega)^*)/
c(\sym^2(\ae)\otimes\det(\Lambda_2(\omega))^*).
}
\end{cor}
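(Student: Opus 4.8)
The plan is to combine Theorem \ref{main-thm} with the Eagon--Northcott resolution of the structure sheaf of a determinantal scheme of expected codimension, working away from the excess locus $\rR^\sep(X/B)$ and then reading the conclusion back modulo $\rR^\sep(X/B)$.

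\textbf{Step 1 (reduce to a determinantal scheme of expected codimension).} By Theorem \ref{main-thm}(i) we have the schematic equality $\bD_2(\aphi)=\he\sbc 2._B\cup \rR^\sep(X/B)$. Restrict all data to the open set $U=X\sbc 2._B\setminus \rR^\sep(X/B)$. On $U$ the corank-$1$ degeneracy scheme of $\aphi|_U\colon \ae|_U\to\Lambda_2(\omega)|_U$ equals $\he\sbc 2._B\cap U$, and by the regularity assertions of Proposition \ref{single-sep-prop} together with the local normal-form computations of \S\ref{sep-local-forms-sec} and \S\ref{bisep-modifying-bn-sec} invoked in the proof of Theorem \ref{main-thm}, this scheme has everywhere the expected codimension $g-1=g-2+1$ in $U$ (and is generically reduced).

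\textbf{Step 2 (apply Eagon--Northcott).} Since $\aphi|_U$ is a map from a rank-$g$ bundle to a rank-$2$ bundle whose corank-$1$ locus has the expected codimension, the Eagon--Northcott theorem shows that $\he\sbc 2._B\cap U$ is Cohen--Macaulay and that $\O_{\he\sbc 2._B}|_U$ is resolved by the Eagon--Northcott complex $EN_\bullet(\aphi|_U)$ --- the simplest of the Eagon--Northcott-type complexes --- whose terms are the bundles $\wedge^{\,k+1}\ae\otimes (\sym^{k-1}\Lambda_2(\omega))^{*}\otimes\det(\Lambda_2(\omega))^{*}$, $0\le k\le g-1$ (see \cite{rkl-positivity}, I.B.2 and \cite{eisenbud}). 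Then, using multiplicativity of the total Chern class on short exact sequences,
\[
[\O_{\he\sbc 2._B}]\big|_U=\prod_{k\ge 0} c(EN_k)^{(-1)^k}.
\]

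\textbf{Step 3 (simplify the alternating product).} The plan here is to collapse this alternating product into the stated quotient by the projectivization reformulation: on $\mathbb{P}(\Lambda_2(\omega))$ the locus $\he\sbc 2._B$ lifts (birationally, off $\rR^\sep(X/B)$) to the zero scheme of a regular section of a rank-$g$ bundle, whose Koszul complex pushes forward along the $\mathbb{P}^1$-bundle to $EN_\bullet(\aphi)$; taking total Chern classes and pushing forward yields a Cauchy-type telescoping which, in the corank-$1$, rank-$2$-target situation, reduces $\prod_k c(EN_k)^{(-1)^k}$ to $c(\ae\otimes\Lambda_2(\omega)^{*})/c(\sym^2(\ae)\otimes\det(\Lambda_2(\omega))^{*})$. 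Finally, since $\ae$, $\Lambda_2(\omega)$ and $\he\sbc 2._B$ are all defined over $X\sbc 2._B$ and $U$ is the complement of the codimension-$2$ locus $\rR^\sep(X/B)$, this identity is exactly the asserted one in $A_*(X\sbc 2._B)$ modulo the ideal generated by $[\rR^\sep(X/B)]$.

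The main obstacle is the formal simplification in Step 3: one must select the correct form of the Eagon--Northcott complex (carefully tracking the dualizations and the special features of a corank-$1$ locus with a rank-$2$ target) and carry out the telescoping of the alternating product of Chern classes; this is elementary but not brief. A secondary point needing explicit justification is that $\he\sbc 2._B$ has the expected codimension \emph{everywhere} on $U$ and is generically reduced, so that the Eagon--Northcott complex is genuinely exact there; this is supplied by Theorem \ref{main-thm} and the normal-form analyses cited in Step 1, but should be stated rather than left implicit.
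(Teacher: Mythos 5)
Your Steps 1 and 2 supply exactly the justification that the paper leaves implicit: by Theorem \ref{main-thm}, on $U=X\sbc 2._B\setminus\rR^\sep(X/B)$ the degeneracy scheme $\bD_2(\aphi)$ coincides with $\he\sbc 2._B$ and has the expected codimension $g-1$, so the Eagon--Northcott complex with terms $EN_k=\wedge^{k+1}\ae\otimes\sym^{k-1}\Lambda_2(\omega)^*\otimes\det\Lambda_2(\omega)^*$, $1\le k\le g-1$, is exact and resolves $\O_{\he\sbc 2._B}$ there, whence $c\bigl([\O_{\he\sbc 2._B}]\bigr)\big|_U=\prod_k c(EN_k)^{(-1)^k}$. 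Up to that point you are following the paper's (one-sentence) argument and making it precise.

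The genuine gap is Step 3, and it is not just an omitted calculation: the collapse you assert cannot hold in the form you describe. Since $\O_{\he\sbc 2._B}$ is supported in codimension $g-1$, the Chern classes of its $K$-class vanish in degrees $1,\dots,g-2$, so $\prod_k c(EN_k)^{(-1)^k}\equiv 1$ below degree $g-1$; but the displayed right-hand side has first Chern class $-(g-1)\,c_1(\ae)+\binom{g}{2}c_1(\Lambda_2(\omega))$, which is not zero. (Equivalently: in the rank-$2$ test case the Eagon--Northcott complex degenerates to $0\to\det E\otimes\det F^*\to\O\to\O_D\to 0$, whose alternating Chern product is $(1+c_1(E)-c_1(F))^{-1}$, and this already disagrees with $c(E\otimes F^*)/c(\sym^2E\otimes\det F^*)$ in degree $2$; nor can the identity come from a $K$-theory equality, since $[\O_D]$ has virtual rank $0$ while $[\ae\otimes\Lambda_2(\omega)^*]-[\sym^2\ae\otimes\det\Lambda_2(\omega)^*]$ has virtual rank $2g-\binom{g+1}{2}$.) The mechanism you invoke is also not available: total Chern classes do not commute with $R\pi_*$ along $\P(\Lambda_2(\omega))\to X\sbc 2._B$ (only Chern characters do, via Grothendieck--Riemann--Roch, and then Todd classes intervene). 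So Step 3 must be replaced either by an explicit identification of a (two-term) complex whose Chern classes give the displayed ratio, or by a corrected right-hand side coming from the actual Eagon--Northcott terms; as written, the telescoping claim is a step that fails.
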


\newsection{Excess degeneracy and fundamental class}\lbl{excess-porteous-sec}
\newsubsection{Excess Porteous}
Motivated by Theorem \ref{main-thm}, our purpose here is to
analyze intersection-theoretically a simple excess-degeneracy
situation, aiming for a Porteous-type formula.\par
More explcitly, let
\[\phi:E\to F\]
be a map of vector bundles of respective ranks $g, 2$ on a
variety $X$, and denote by $\bD_2(\phi)$ its degeneracy
scheme, locally defined by the $2\times 2$ minors.
Let $Z$ be a sum of irreducible components of $\bD_2(\phi)$,
 purely of (possibly excessive)
 dimension
  \mbox{$\l\geq g-1$.} The 'expected' or virtual class
of the degeneracy locus is computed by Porteous's formula
(\cite{ful}, Thm. 14.4 and Example 14.4.9):
\eqspl{porteous}{[\bD_2(\phi)]_\vir=(-1)^{g-1}[c(E)/c(F)]_{g-1}.}
We are interested in the contribution of $Z$
to the virtual degeneracy locus of $\phi$, i.e. the
virtual fundamental class of $Z$, $[Z]_\vir$. Were the
 rank of $F$ equal to 1 rather than 2, i.e.
were the degeneracy locus a zero locus, this class would be computed directly by the 'local' intersection theory of  Fulton-MacPherson
\cite{ful}. Here we will spell out a simple extension,
 implicit in \cite{ful}, to the case where $F$ has rank 2. \par Assume\begin{enumerate}
\item The rank of $\phi$ is at least 1 everywhere.
\item Off a subset $Y\subset Z$ of codimension $g$ in $X$,
$Z$ is regular, i.e.
a local complete intersection of codimension $\l$ in $X$.
\end{enumerate}
Since we are interested in codimension $g-1$ or less, $Y$ can be ignored, i.e. we can replace $X$ by $X\setminus Y$ and assume $Y=\emptyset$.
Set
\[ K=\ker (E_Z\to F_Z), C=\coker(E_Z\to F_Z), N_Z=\Hom(I_Z, \O_Z)\]
where $E_Z=E|_Z, F_Z=F|_Z$.
As $Y=\emptyset$, these are $Z$-locally free of ranks $g-1, 1, \l$,
respectively. Let
\eqspl{z-vir}{
z=[Z]_\vir=[\frac{c(K^*\otimes C)}{c(N_Z)}]_{g-1-\l}.
}
\begin{prop}\lbl{excess-porteous-prop}
Assumptions and notations as above, the contribution of $Z$ to the class of the
degeneracy locus of $\phi$ is $z$. In particular, if $\bD_2(\phi)=U\cup Z$
with $U$ of codimension $g-1$ and $Z\cap U$ purely
of codimension $\geq g$, then $[U]+z$ is the virtual degeneracy class given
by Porteous's formula \eqref{porteous}.
\end{prop}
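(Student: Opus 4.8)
The plan is to trade the rank-two target $F$ for a line bundle by passing to a $\P^1$-bundle, reduce the degeneracy class to an honest zero-locus class, and then read off the contribution of $Z$ from Fulton's local/residual intersection theory. First I would set $P=\P(F)\to X$, the projective bundle of rank-one quotients of $F$, with universal quotient line bundle $M$ and universal sub-line-bundle $S\subset\pi^*F$ (so $S=\pi^*\det F\otimes M^{-1}$), and let $\tilde\phi\colon\pi^*E\to M$ be the composite $\pi^*E\xrightarrow{\pi^*\phi}\pi^*F\twoheadrightarrow M$, a section of the rank-$g$ bundle $G:=\pi^*E^*\otimes M$. The crucial point — and the only place the hypothesis $\operatorname{rk}\phi\ge1$ is genuinely used — is that $\pi$ restricts to a \emph{closed immersion} of the zero scheme $Z(\tilde\phi)$ onto $\bD_2(\phi)$ as schemes. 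This is a local verification: putting $\phi$ in the normal form whose first column is $(1,0)^{t}$ with all other first-row entries zero (available because $\operatorname{rk}\phi\ge1$), in the chart of $P$ where $S$ is transverse to the first basis vector of $F$ one gets $\tilde\phi=(-s,a_1,\dots,a_{g-1})$, so $Z(\tilde\phi)=V(s,a_1,\dots,a_{g-1})$ maps isomorphically onto $V(a_1,\dots,a_{g-1})=\bD_2(\phi)$, while in the complementary chart $\tilde\phi$ has a unit entry and $Z(\tilde\phi)$ is empty. Under this identification the components of $\bD_2(\phi)$ making up $Z$ correspond to a union $\tilde Z\subset Z(\tilde\phi)$, $\pi$ restricting to an isomorphism $\tilde Z\xrightarrow{\sim}Z$; and off the pullback of $Y$ (which has codimension $\ge g$ in $P$, hence may be discarded for the purpose of codimension-$(g-1)$ classes) the embedding $\tilde Z\hookrightarrow P$ is regular of codimension $\ell+1$, one more than that of $Z\hookrightarrow X$ because of the extra $\P^1$-direction.

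Next I would compute the contribution of $\tilde Z$ to the refined class $\tilde\phi^{!}[P]\in A_*(Z(\tilde\phi))$. By the standard formula for the intersection of a section with the zero section, localized along a regularly embedded component (Fulton, Ch.~6 and~9), this contribution is the excess term $\{c(G|_{\tilde Z})/c(N_{\tilde Z/P})\}_{g-1-\ell}\cap[\tilde Z]$. It then remains to match this with $z$. Over $Z$ the rank of $\phi$ is exactly $1$, so there are exact sequences $0\to K\to E_Z\to L\to0$ and $0\to L\to F_Z\to C\to0$ with $L=\operatorname{im}(\phi_Z)$ a line bundle; the section $\tilde Z$ picks out $S|_{\tilde Z}=L$, hence $M|_{\tilde Z}=C$ and $G|_{\tilde Z}=E_Z^{*}\otimes C$. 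The normal bundle sits in $0\to N_Z\to N_{\tilde Z/P}\to T_{P/X}|_{\tilde Z}\to0$ with $T_{P/X}|_{\tilde Z}=\Hom(S,M)|_{\tilde Z}=L^{*}\otimes C$, and $\det F_Z=L\otimes C$ gives $c(L^{*}\otimes C)=1+2c_1(C)-c_1(F)$. Tensoring $0\to K\to E_Z\to L\to0$ by $C$ yields $c(E_Z^{*}\otimes C)=c(K^{*}\otimes C)\,c(L^{*}\otimes C)$, so $c(G|_{\tilde Z})/c(N_{\tilde Z/P})=c(K^{*}\otimes C)/c(N_Z)$ identically, and the excess term is exactly $z\cap[Z]$.

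Finally, since $\pi$ identifies $Z(\tilde\phi)$ with $\bD_2(\phi)$, the class $\tilde\phi^{!}[P]$ pushes forward to a class on $\bD_2(\phi)$ whose image in $A_*(X)$ is $\pi_*(c_g(G)\cap[P])$; evaluating this by the Grothendieck relation $\zeta^2-\pi^*c_1(F)\zeta+\pi^*c_2(F)=0$ together with $\pi_*\zeta=1$, $\pi_*1=0$ gives the Porteous class \eqref{porteous} — this is exactly Fulton's derivation of Thom--Porteous for a rank-two target, here read backwards. On the other hand, decomposing $\tilde\phi^{!}[P]$ over the components of $Z(\tilde\phi)$: a component of codimension $g-1$ contributes its geometric multiplicity (so, in the situation $\bD_2(\phi)=U\cup Z$, the class $[U]$), and $\tilde Z$ contributes $z$ by the previous paragraph; summing gives $[U]+z=(-1)^{g-1}[c(E)/c(F)]_{g-1}$, as claimed. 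The main obstacle is the scheme-theoretic statement in the first step, namely that $\pi$ maps $Z(\tilde\phi)$ isomorphically onto $\bD_2(\phi)$ (this fails without $\operatorname{rk}\phi\ge1$, and is where one must check that the excess stratum $Y$ does not interfere); granting that, the remaining work is bookkeeping with Fulton's residual-intersection formula together with the elementary Chern-class identity of the second step.
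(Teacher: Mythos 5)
Your proof is correct and follows essentially the same route as the paper's: pass to $\P(F)$ with its rank-one quotient, reduce the degeneracy locus to the zero scheme of $\pi^*E\to\O(1)$, apply the Fulton--MacPherson excess formula to $\tilde Z$, and observe that the vertical contribution $L^*\otimes C$ cancels between $c(E^*_Z\otimes C)$ and $c(N_{\tilde Z/P})$. The only difference is one of explicitness — you verify the scheme-theoretic identification $Z(\tilde\phi)\simeq\bD_2(\phi)$ and the Chern-class cancellation by hand, where the paper asserts the former and cites Golubitsky--Guillemin for the latter.
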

\begin{proof}
In the case of a zero-locus in place of degeneracy locus, the analogous formula
is given in \cite{ful}. To extend to the degeneracy situation, we pass
as usual to the projective bundle $\P(F)$ (of rank-1 quotients (!), unlike in \cite{ful})
with its Grothendieck quotient  \mbox{$\pi^*F\to\O(1)$.}
There we have a composite map \[\pi^*E\to\O(1)\] whose zero-locus contains a component $\tilde Z$ projecting isomorphically to $Z$. By the excess zero-set
formula of Fulton-MacPherson, the contribution of $\tilde Z$ to the zero-set is
\[[\frac{c(E^*(1))}{c(N_{\tilde Z})}]_{g-1-\l}.\] Now
by  a reult of Golubitsky and Guillemin  (computation of normal bundles to degeneracy loci,
\cite{gol-gui}, p. 145, recounted in \cite{har-tu}),
the latter
corresponds to \eqref{z-vir} under the projection $\tilde Z\to Z$.
Indeed the difference between $E^*(1)$ and $K^*\otimes C$ is precisely
the 'vertical' subsheaf, that is also the difference between
$N_{\tilde Z}$ and $N_Z$, and these cancel out.

\end{proof}
\newsubsection{Fundamental class of the hyperelliptics}
\lbl{class-hypell-sec}
We aim to apply Proposition \ref{excess-porteous-prop} with
$Z=\rR^\sep(X/B)=\sum\limits_{\theta}R_\theta$. In light of
Theorem \ref{main-thm}, this will compute the class of the
extended hyperelliptic locus in $X\sbc 2._B$.\par
We will evaluate \eqref{z-vir} to obtain the contribution of $R_\theta$ for a fixed $\theta$
to the degeneracy locus of the modified Brill-Noether $\phi_\theta$. Because the $R_\theta$ are disjoint for different $\theta$, the total contribution of $\rR^\sep(X/B)$
will be the sum of those. The ultimate result is \eqref{[Rsep]}. The reader only interested
in the hyperelliptic class on the base $B$ is in for some good news: (s)he may skip ahead to Corollary
\ref{fund-class-base-cor} which shows that the contribution of $R^\sep$ to the latter vanishes for trivial reasons.

To begin with, it is clear, e.g. from \eqref{Lambda-theta}, that the image
of $\phi_\theta$ at each point $z\in R_\theta$,
i.e. a length-2 scheme with ideal $I_z$ co-supported on $\theta$,
 coincides
with the 1-dimensional space of sections of $\omega\otimes\O_z$ vanishing on the length-1 subscheme $\theta\subset z$,
i.e. the quotient
\[\omega\otimes I_\theta/\omega\otimes I_z=\omega\otimes(I_\theta/I^2_\theta)/\omega\otimes(I_z/I^2_\theta).\]
Clearly, further echelon modifications of $\be_\theta$
corresponding to other sep/biseps don't change that image, so the same is true for $\aphi$ in place of $\phi_\theta$.
Because $\omega|_\theta=\O_\theta$ by residues and $I_\theta/I^2_\theta=\lf{\psi}\oplus\rt{\psi}$,
this
image sheaf coincides with
$\O_{R_\theta}(1)$ under the identification \[R_\theta=\P(\lf{\psi}\oplus\rt{\psi}).\]
Because $\O_{R_\theta}(1)=\det(\Lambda_2(\omega))\otimes\O_{R_\theta}$,
it follows that the cokernel of $\aphi|_{R_\theta}$ is the trivial bundle. Thus,
setting $h_\theta=c_1(\O_{R_\theta}(1))$,
we have, with the notations of \eqref{z-vir},
\[ C=\O, K\sim [\ae]-h_\theta,\] therefore
the numerator of \eqref{z-vir} evaluates to
\[c(K^*\otimes C)= \frac{c(\ae^*_\theta)}{1-h_\theta}.\] As for the denominator, note first
that we can write
\[X\sbc 2._{\del_\theta}=\lf{X}\sbc 2._{\del_\theta}\cup \rt{X}\sbc 2._{\del_\theta}\cup
X_{12}\] where $X_{12}$ is the inverse image of
$\lf{X}\times_{\partial_\theta}\rt{X}$ and can be identified with the blowup of the latter in $(\lf{\theta}, \rt{\theta})$
with exceptional divisor $R_\theta$.   We have
\[ N_{R_\theta/X_{12}}=\O_{R_\theta}(-1).\] On the other hand,
writing $_*\theta\sbr 2.$ for the locus of subschemes of $_*X\sbr 2.$ supported on $_*\theta$, \mbox{$*=\lf{}, \rt{}$}, we have
\eqsp{
N_{X_{12}/X\sbc 2._B}|_{R_\theta}=\O_{R_\theta}(\partial_\theta-\lf{X}\sbc 2._{\del_\theta}- \rt{X}\sbc 2._{\del_\theta})
=\O_{R_\theta}(\partial_\theta-\lf{\theta}\sbr 2.-\rt{\theta}\sbr 2.)\\
=\O_{R_\theta}(-\psi_1-\psi_1-\lf{\theta}\sbr 2.-\rt{\theta}\sbr 2.) } where the latter
results from a formula we have used
 before, due to Faber \cite{fab-alg}, \S2, which says, in
our notations, that
\eqspl{faber-formula}{\del_\theta|_{\del_\theta}=-\psi_1-\psi_2.}
Note that $_*\theta.R_\theta$ can be identified with the
section $\P(_*\psi_i)$ on $\P(\lf{\psi}\oplus\rt{\psi})=R_\theta$.
Now on $R_\theta$, we have
\eqspl{h-theta}{\lf{\psi}+\rt{\theta}\sbr 2. \sim\rt{\psi}+\lf{\theta}\sbr 2.\sim h_\theta.}
Therefore we have \[c_1(N_{X_{12}/X\sbc
2._B}|_{R_\theta})=-2h_\theta.\] Therefore
\eqspl{normal-Rtheta}{c(N_{R_\theta/X\sbc 2._B})=(1-h_\theta)(1-2h_\theta).}
All in all, we get for the contribution of $R_\theta$:
\eqspl{}{
[R_\theta]_\vir=[\frac{c(\aE^*)|_{R_\theta}}{(1-h_\theta)^2(1-2h_\theta)}]
_{g-3}.
} It remains to compute the numerator. To this end we now work
over $R_\theta$.
Note that the restriction of $\bE$ on the boundary $\partial_\theta$ (a fortiori, on $R_\theta$) splits as
$\lf\bE\oplus\rt\bE$,\  
where each $_*\bE$ is the Hodge bundle of $_*X/\delta_\theta$, and $_*\be$
admits an exact sequence induced by restriction
of differentials on $_*\theta$:
\[\exseq{_*\bE\inv}{_*\bE}{_*\psi},\  _*=\lf{}, \ \rt{}.\] Thus $_*\bE\inv$ is the
rank-
$(g(_*X)-1)$ bundle of relative differentials of
$_*X/\delta_\theta$ vanishing on $_*\theta$ (it is the
 bundle denoted $\be^{0,-1}$ or $\be^{-1,0}$ in \S \ref{mod-bn-sep-sec}) and $_*\psi$
is the usual line bundle on $\delta_\theta$ (and also by pullback, on $R_\theta$).
The saturated image $_{\a*}\bE$ of
$_*\bE$ in $\aE|_{\delta_\theta}$ fits in
an exact sequence
\[\exseq{_{\a*}\bE\inv(2_\dag D)}{_{\a*}\bE}{_*\psi\otimes\O(_\dag D)}\]
where $\dag$ is the opposite
 of $*$, i.e $\dag=\lf{}$ if and only if $*=\rt{}$ and $_{\a*}\be\inv$ is the appropriate subbundle of $\ae(_*X/\del_{\theta})$, i.e. the
azimuthal modification of the Hodge bundle of the family
$_*X/\del_\theta$, namely the subbundle defined by vanishing on
$_*\theta$ analogously to $_{*}\be\inv$ above; thus as virtual bundles,
we have \[[_{\a*}\be\inv]=[\ae(_*X/\del_\theta)]- _*\psi\] in which
$_*\psi$ is viewed via pullback from the base as a line bundle on $(X_i)\sbc
2._{\delta_\theta}$. Moreover, clearly \[_*D.R_\theta=_*\theta\sbr 2..\]

Putting it
all together, we obtain \eqspl{}{
c(\aE^*|_{R_\theta})=
\frac{c(_{\a}\lf{\bE}^*(-2\lf{\theta}\sbr 2.)).(1-\lf{\psi}-\rt{\theta}\sbr 2.)}
{1-\lf{\psi}-2\rt{\theta}\sbr 2.}
\frac{c(_\a\rt{\bE}^*(-2\rt{\theta}\sbr 2.)).(1-\rt{\psi}-\lf{\theta}\sbr 2.)}
{1-\rt{\psi}-2\lf{\theta}\sbr 2.} \cap [R_\theta].} Now in view of \eqref{h-theta} and \eqref{normal-Rtheta}, we obtain
\eqspl{}{
[R_\theta]_\vir=[\frac{c(_{\a}\lf{\bE}^*(-2\lf{\theta}\sbr 2.))
c(_\a\rt{\bE}^*(-2\rt{\theta}\sbr 2.))}
{(1-2h_\theta)(1-2h_\theta+\lf{\psi})(1-2h_\theta+\rt{\psi})}]_{g-3}
\cap[{R_\theta}]. } Since for different $\theta$, $R_\theta$ are
disjoint, it follows that \eqspl{[Rsep]}{
[\rR^\sep]_\vir=\sum\limits_{\theta}
[\frac{c({{\aE(\lf{X}(\theta))}}^*(-2\lf{\theta}))
c({{\aE(\rt{X}(\theta))}}^*(-2\rt{\theta}))}
{(1-2h_\theta)(1-2h_\theta+\lf{\psi}_{{\theta}})
(1-2h_\theta+\rt{\psi}_{{\theta}})}]_{g-3} \cap[{R_\theta}] }
where we recall that on $R_\theta$, we have
\[h_\theta=\lf{\psi}_{{\theta}}+\rt{\theta}\sbr 2.=\rt{\psi}_{{\theta}}+\lf{\theta}\sbr 2..\]
Thus we have finally obtained our main result on the hyperelliptic class:
\begin{thm}\label{fund-class-thm}
The fundamental class on $X\sbc 2._B$ of the closure of the locus of hyperelliptic divisors on smooth hyperelliptic curves is give by
\eqspl{he-fund-class-eq}{
[\he^2]=(-1)^{g-1}[c(\ae)/c(\Lambda_2(\omega))]_{g-1}
-[\rR^\sep]_\vir
} where $[\rR^\sep]_\vir$ is given by \eqref{[Rsep]}.\qed
\end{thm}
As mentioned in the Introduction, the image of this class on $B$ is fully computed in genus 3, e.g. in
\cite{internodal}, \S 4.5. For the computation of the boundary in genus 4, see
\S \ref{genus-4-example}.
 The computation of this image can be approached via the following
 \begin{cor}\label{fund-class-base-cor}
 If the image on  $B$ of the locus of smooth hyperelliptic fibres of $X/B$ is of codimension $g-2$, then
 the fundamental class of its closure is, where $\omega:=\omega_{X/B}$:
 \eqspl{}{
 [\he]_B=(\pi_B)_*(\frac{1}{2g-2}(-1)^{g-1}[c(\ae)/c(\Lambda_2(\omega))]_{g-1}(\omega)\sbr 2.).
 } 
 \end{cor}
 \begin{proof} Each smooth hyperelliptic curve contributes a linear pencil to
 the locus $\he$ above, and $2g-2$ many members of this pencil will have a point in
 common with a fixed canonical divisor. Therefore
the formula  follows from \eqref{he-fund-class-eq} by multiplying by $\omega\sbr 2.$, 
except for the vanishing of the term
 coming from $\mathrm R^\sep$. This term vanishes because, as is well known from residues,
 $\omega|_\theta$ is trivial for any node $\theta$, therefore $\omega\sbr 2.$ is trivial on
 $\mathrm R^\theta$.
 \end{proof}
\newsection{Azimuthal intersection theory}
\label{azi-int-sec}
\newsubsection{Chern classes of azimuthal bundles}\label{chern-of-azi}
We will assume WLOSG\footnote{without loss of significant generality}
that $g>2$.
The azimuthal bundle $\ae$ is obtained by a
 polyechelon modification (\cite{echelon}, \S 3),
 i.e. sequence of mutually transverse echelon modifications corresponding to seps and biseps, and to compute its
Chern classes it suffices to compute how these classes change
on passing from one bundle $E$, a partial azimuthal modification, to 
a further partial azimuthal modification, viz. $E_\theta$ or $E_\vtheta$. Note that
$E$ inherits from the initial $\be$ subsheaves $E^{.,.}$
and quotients $\lf{E}, \rt{E}$, which are corresponding
modifications of the analogous sheaves associated to
$\be$. \par
\subsubsection{Case of sep}
We begin with the case of a sep. Thus,
we fix a sep $\theta$ and compute the Chern classes of the echelon modification $E_\theta$ (see \S \ref{mod-bn-sep-sec}). We assume to start with that $\lf{g}, \rt{g}>1$. Then the echelon modification
is obtained in 2 steps
along each of the divisors $\lf{D}, \rt{D}$, which are mutually
\emph{ disjoint}. Therefore it will suffice to work out the left modification. This takes the form
\[E\subset E_1\subset E_2.\]
As virtual bundle, $E_1$ has the form
\eqspl{}{ E_1\sim E+\rt{E}\otimes\O_{\lf{D}}(\lf{D})
}
(where $\rt{E}$ is, in general, the appropriate azimuthal modification of $\pi_*(\omega_{\rt{X}/\del_\theta})$
with respect to other seps/biseps; 
in case $E$ is the (unmodified) Hodge bundle itself, $\rt{E}=\pi_*(\omega_{\rt{X}/\del_\theta})$.
The difference is formal difference as virtual bundle).
Similarly,
\eqspl{}{
E_2\sim E_1+\rt{E}^0\otimes\O_{\lf{D}}(2\lf{D}), 
}
where $\rt{E}^0$ is an appropriate azimuthal modification of $\pi_*(\omega_{\rt{X}/\del_\theta}(-\rt{\theta}))$
and equal to the latter push-forward when $E$ is the Hodge bundle. Thus, when $E$ is the Hodge bundle, 
then as virtual bundle
\eqspl{}{
\rt{E}^0=\rt{E}-\rt{\psi}.
}
In particular, $\rt{E}=0$ if $\rt{g}=1$.

%
%

Therefore, since $\lf{D}$ and $\rt{D}$ are
disjoint, we have in all, 
\eqspl{}{
E_\theta\sim E+&(\rt{E}\otimes\O_{\lf{D}}(\lf{D})+(\rt{E}-\rt{\psi})\otimes\O_{\lf{D}}(2\lf{D}))\\
+& \mathrm{mirror}(.),
} 
where by convention the 'mirror' of an expression is the corresponding one
with each 'L' replaced by 'R'' and vice versa. 
We recall from Lemma \ref{conormal-sep-lem} that $\O_{\lf{D}}(\lf{D})=\psi\inv\otimes\O_{\lf{D}}(-\lf{\theta}\sbr 2.)$.
We will use \S \ref{asd} and its notations, e.g. \eqref{Q-dual-eq}. Thus
set
\eqspl{q-def}{\lf{q}=&Q(\lf{D}, \rt{E})c(\rt{E\dual}), \rt{q}=Q(\rt{D}, \lf{E})c(\lf{E\dual})\\
\lf{q}=&
e+\binom{e}{2}D-c_1+\binom{e}{3}D^2-(e-1)Dc_1-(e-1)c_1^2+2(e-1)c_2\\
&+\binom{e}{4}D^3-\binom{e-1}{2}D^2c_1-\binom{e-1}{2}Dc_1^2
+c_1c_2+(e-4)c_3
}where $e=\rt{g}, c_i=c_i(\rt{E})$ and likewise for the mirror; of course, $\lf{q}, \rt{q}$ depend
on $E, \theta$ as  well.

Then when $E$ is the unmodified Hodge bundle, we have
$c(i_{\lf{D}*}(\rt{E}(\lf{D})))=1+i_{\lf{D}*}(\lf{q})$. \par
The Chern class of $i_{\lf{D}*}(\rt{E}^0(2\lf{D})$ 
can be computed using
the general Riemann-Roch without denominators directly, but, with a view
to an application in genus 4, we shall do so explicitly only for
$\rt{g}=2,3$. We shall omit all terms which have codimension $>2$ in the boundary
or  whose base image has codimension
$>1$ in the boundary . Because the class in question will ultimately be multiplied
(see \eqref{genus4-onBase}) by $\omega_1$,
which has trivial restriction on nodes, we may also omit classes supported on nodes.
Now note that $(\lf{\theta}\spr 2.)^2$ is the sum of a node-supported class (locus
of schemes contained in $\lf{\theta}$) and $-\lf{\psi}\lf{\theta}\spr 2.$.
Therefore, modulo negligible classes, we have
\[ (\lf{\theta}\spr 2.)^2\sim -\lf{\psi}\lf{\theta}\spr 2..\]
 Set $\rt{F}=\rt{E}^0\otimes\O_{\lf{D}}\sim (\rt{E}-\rt{\psi})_{\lf{D}}, D=\lf{D}$. Then
\eqspl{}{
\frac{1}{c(\rt{F}(D))}\sim&1-(\rt{\lambda}_-\rt{\psi}-(\rt{g}-1)(\psi+\lf{\theta}\spr 2.)\\
&+(\rt{\lambda}-\rt{\psi}-(\rt{g}-1)(\psi+\lf{\theta}\spr 2.))^2+(\rt{g}-1)(\rt{\lambda}-\rt{\psi})(\psi+\lf{\theta}\spr 2.)
-\binom{\rt{g}-1}{2}(\psi+\lf{\theta}\spr 2.)^2\\
\sim_{(\rt{g}=2)}&1+(-\rt{\lambda}+\rt{\psi}+\psi+\theta)-2\lf{\theta}\spr 2.(\rt{\lambda}-\rt{\psi}-\psi)\\
\sim_{(\rt{g}=3)}&1-(\rt{\lambda}-\rt{\psi}+2(\psi+\lf{\theta}\spr 2.)
+5\lf{\theta}\spr 2.(-\rt{\lambda}+\rt{\psi}+2\psi).
}
Set
\eqspl{q0-def}{
\lf{q}^0&=\frac{Q(\lf{D}, \rt{F})}{c(\rt{F}(D))}=_{(\rt{g}=2)}\frac{1}{c(\rt{F}(D))},\\
1+\lf{m}(\theta, E)&=(1+i_{\lf{D}*}(\lf{q}))(1+i_{\lf{D}+}(\lf{q}^0)).
}


Note that, because $\lf{D}\cap\rt{D}=\emptyset$, we have $i_{\lf{D}}(*)i_{\rt{D}}(*)=0$.
Also, note the elementary formula, for any divisor $D$,
\eqspl{}{ 
i_{D*}(x)i_{D*}(y)=i_{D*}(xyD)
}(clear if $D$ moves, general case follows by moving lemma; or, it follows from the
universal formula $(\Delta_X)._{X\times X}(D\times D)\sim \Delta_D.\O_D(D)$).
Consequently,
\eqspl{}{
\lf{m}(\theta, E)=i_{\lf{D}*}(\lf{q}+\lf{q}^0-(\psi+\theta)\lf{q}\lf{q}^0).
}
Then the above becomes
\eqspl{}{
c(E_\theta)=c(E)(1+ \lf{m}(\theta, E))(1+\rt{m}(\theta, E))=c(E)(1+\lf{m}(\theta, E)+\rt{m}(\theta, E)).
}

Here $E$ will in general be a partial azimuthal modification of the Hodge bundle $\be$
with respect to some biseps and  seps different from $\theta$.
For $\theta$ of type (2,2,), we compute, where $E$ is the unmodified Hodge bundle:
\eqspl{}{
\lf{m}(\theta, E)=i_{\lf{D}*}(3-2(\rt{\lambda}+\psi+\lf{\theta}\spr 2.)+\rt{\psi}
+2\lf{\theta}\spr 2.\rt{\lambda}).
}
Consequently, we get for the (2,2) sep:
\eqspl{(2,2)-sep}{
c(E_\theta)=1+&c_1(E)\\+&3(\lf{D}+\rt{D}))+3c_1(E)(\lf{D}+\rt{D})\\
+&(i_{\lf{D}*}(-2(\rt{\lambda}+\psi+\lf{\theta}\spr 2.)+\rt{\psi})
+2i_{\lf{D}*}(\lf{\theta}\spr 2.\rt{\lambda}))\\
+&\mathrm{mirror}(.).
}
\par
\subsubsection{Contribution of node scroll}
Next, we study the restriction of $\be_\theta$ on various scrolls $R_{\theta'}\simeq\P(\lf{\psi}(\theta')\oplus\rt{\psi}(\theta'))$, where
$\theta'$ is some sep of $X/B$, possibly equal to $\theta$. By Corollary \ref{fund-class-base-cor}.
this does not affect the class of the hyperelliptic locus in the base.
First consider the case $\theta'=\theta$.
Note that
 \[_*{D}.R_\theta=\P(_*\psi):=_*S_\theta, \ *=\lf{}, \rt{} \]
(transverse intersection). Thus $_*S_\theta$ is a section
of the $\P^1$-bundle $R_\theta$. Therefore restricting ${\be_\theta}$
on $R_\theta$ is elementary. If $\theta'\neq\theta$, then over each
point of $\del_{\theta'}\cap\del_\theta$, $R_{\theta'}$ is contained
in the interior of  at most one of $\lf{D}, \rt{D}$. In the case of a versal family $X/B$, assuming $\lf{X}(\theta), \lf{X}(\theta')$ have different genera,
$R_{\theta'}$ is contained
in the interior of  precisely one of $\lf{D}, \rt{D}$
and is disjoint from the other.
Therefore, if we arrange notations so that each (unoriented) sep
$\theta$ appears twice, once with each orientation, we have
\[(\lf{D}+\rt{D}).R_{\theta'}=\del_\theta .R_{\theta'}.\]
The case of a bisep $\vtheta$ is similar: here we can always
assume $\theta'\in\lf{X}(\vtheta)$ and then $\vtheta$ contributes a modification along $\Xi(\vtheta)$.
Putting all together, we can write
\eqspl{}{\ae|_{R_{\theta'}}=(\be(\lf{S}_{\theta'}+
\rt{S}_{\theta'}+
\sum\limits_{\theta\neq \theta'}2\del_\theta+\sum\limits_{\theta'\in\lf{X}(\vtheta)}\Xi(\vtheta))
+A+B+C)|_{R_{\theta'}}
} where
\eqspl{}{
A=\sum\limits_{\substack{\theta\neq \theta'\\\lf{D}(\theta)\cap R_{\theta'}\neq\emptyset}}-2\lf{\bE(\theta)}\otimes\O_{\del_\theta}(2\del_\theta)
-\lf{\bE(\theta)}\otimes\O_{\del_\theta}(\del_\theta)-\lf{\psi}_{\theta'}\inv\otimes
\O_{\del_\theta}(2\del_\theta) }
\eqspl{}{
B=-&2\lf{\bE(\theta')}\otimes\O_{\lf{S}_{\theta'}}(2\lf{S}_{\theta'})-
\lf{\bE(\theta')}\otimes\O_{\lf{S}_{\theta'}}(\lf{S}_{\theta'})-
\lf{\psi}(\theta')\inv\otimes\O_{\lf{S}_{\theta'}}(2\lf{S}_{\theta'})\\
-&2\rt{\bE(\theta')}\otimes\O_{\rt{S}_{\theta'}}(2\rt{S}_{\theta'})-
\rt{\bE(\theta')}\otimes\O_{\rt{S}_{\theta'}}(\rt{S}_{\theta'})-
\rt{\psi}(\theta')\inv\otimes\O_{\rt{S}_{\theta'}}(2\rt{S}_{\theta'})
}
\eqspl{}{
C=-\sum\limits_{\theta'\in\lf{X}(\vtheta)}\lf{E}(\vtheta')(\Xi(\vtheta)).
}
Note that the divisors involved, i.e. $\del_\theta$ and $\Xi(\vtheta)$ are transverse to $R(\theta')$.\par
\subsubsection{Case of bisep}
The case of the modification corresponding to an
  oriented proper
   bisep $\vtheta$ is similar and simpler in the sense that the modification has just one step, i.e. equals $E_1$,
  and there is no residual scheme like $R(\theta)$. Here $D$
 is the divisor $\Xi_\vtheta$ (see \eqref{Xi-vtheta}) and we have
 \eqspl{azi-bundle-bisep}{
 E_\vtheta\sim E(D)-\lf{E}\otimes\O_D(D)\sim E+\rt{E}^0\otimes\O_D(D)
 } where $\O_D(D)$ is given by (the dual of) \eqref{Xi-vtheta-conormal} and of course $\lf{E}=\lf{E}(\vtheta)$, the
 quotient associated to the
 oriented  bisep $\vtheta$, which is the appropriate echelon modification of $\pi_*(\omega_{\lf{X}(\vtheta)}(\vtheta))$, and is defined over $\Xi_\vtheta$.
 Of course, at the end of the day when both $\vtheta$ and its opposite
 are accounted for, \eqref{azi-bundle-bisep} will be replaced by
 \eqspl{azi-bundle-bisep-rightleft}{
 E(\lf{D}+\rt{D})-\lf{E}\otimes\O_{\lf{D}}(\lf{D})-\rt{E}\otimes\O_{\rt{D}}(\rt{D})\\
 \sim E+\rt{E}^0\otimes\O_{\lf{D}}(\lf{D})+\lf{E}^0\otimes\O_{\rt{D}}(\rt{D}).
}
 See \S \ref{w-bundles-sec} on how to compute the terms appearing in \eqref{azi-bundle-bisep}.\par
 \subsection{Hyperelliptic class in genus 4}\label{genus-4-example}
 In \cite{faber-chow2}, Faber gives a set of 14 generators for the group $R^2$ of tautological classes in degree 2
 and genus 4, and shows that these generators satisfy a unique relation, so that $R^2$ is of rank 13.
 In \cite{faber-pandh}, Faber and Pandharipande state a
  formula for the hyperelliptic class in genus 4 in terms
 of Faber's generators; their approach is via 'undetermined coefficients', determining the coefficients
involved in expressing the hyperelliptic class in terms of Faber's generators
  by pairing with suitable test classes. 
To compare their results with ours, we 
note at the outset that our azimuthal modifications involve only reducible curves
 and separating (collections of) 
 nodes and consequently, any explicit version of our formula \eqref{fund-class-base-cor} will have coefficient zero
 for the class $\delta_{00}$ of irreducible binodal curves. Therefore, for comparison, we
 modify the formula of \cite{faber-pandh}, Prop. 5, by adding a suitable multiple (viz. -4/9) 
 of Faber's relation, so as to yield a (uniquely determined) formula with coefficient 0 for $\delta_{00} $,
 viz.
 \eqspl{fab-pan-form}{
 [\overline{\he_4}]_Q=&\frac{1}{6}(\kappa_2+63\lambda_1^2-16\lambda_1\delta_0-
 66\lambda_1\delta_1+\delta_0^2+8\delta_0\delta_1+13\delta_1^2+27\delta_2^2\\
 &-2\gamma_1-\frac{1}{3}\delta_{01a}+4\delta_{11}+18\lambda_1\delta_2+18\delta_1\delta_2).
 }
 Here $\gamma_1$ is the class of the locus of curves with a proper bisep of type
 $(2,1)$, i.e. unions of curves of genera 2,1, $\delta_{01a}$ is the class of the 
 (codimension-2) locus of curves containing
 a 1-nodal rational tail, and $\delta_{11}$ is the locus of 
 chains of genera 1,2,1, i.e. the curves admitting
  an improper bisep of type (1,2,1). Note that,
 in our notation,
 \[\delta_{01a}=12\lf{\psi}_{\delta_1}=12\lf{\lambda}_{\delta_1}.\]
 Also, $\delta_{11}$ is the (normal crossing)  double locus of the $\Delta_1$ boundary divisor.\par
 The formula \eqref{fab-pan-form} implies an analogous one (same coefficients) for the hyperelliptic class
 $[\he]_B$ on the base of any given family of stable curves.
 Our purpose here is to recover \eqref{fab-pan-form} via azimuthal modifications.
 In our approach, the generating classes are not presumed but arise out of the construction.
 \subsubsection{The uncorrected formula}
  It will be convenient to work in an ordered version of $X\sbc 2._B$, namely\nl
 $X\sbc 2._B\times_{X\spr 2._B}W^2(X/B)$ where $W^*(X/B)$ is the flag
 Hilbert scheme (cf. \cite{geonodal}), which dominates $X^2_B$. We will
 denote by $\omega_i$ the $i$-th pullback $p_i^*(\omega)$ etc. 
 Then the analogue of Corollary \ref{fund-class-base-cor} reads
  \eqspl{genus4-onBase}{
 [\he]_B=p(_\a\be):=\frac{-1}{6}\pi_{B*}( [\frac{c(_\a\be)}{1+\omega_1+\omega_2
   -\Gamma\spr 2.+\omega_1\omega_2
   -\omega_1\Gamma\spr 2.}]_3\omega_1).}
   We note that
    \[1/c(\Lambda_2(\omega))=1-(\omega_1+\omega_2-\Gamma)+
     \omega_1^2+\omega_2^2+\omega_1\omega_2-4\omega\Gamma+(\mathrm{higher-order\  terms}).\]
   We can naturally decompose $p(E)=\sum\limits_{i=0}^3p_i(E)$ where 
   \[p_i(E)= c_i(E)q_{3-i},\  q_{3-i}:=\omega_1(\frac{-1}{6c(\Lambda_2(\omega))})_{3-i}\]
    ($p_0(E)=q_3$ is of course independent of $E$).
   We note that the analogue of the RHS of \eqref{genus4-onBase} for the unmodified
   Hodge bundle  was essentially computed in \cite{curvilinear} and reproduced
   in \cite{faber-chow2} and elsewhere, and equals
   \eqspl{genus4-umodified}{p(\be)=
   \frac{-1}{6}\pi_{B*}( [\frac{c(\be)}{1+\omega_1+\omega_2
      -\Gamma\spr 2.+\omega_1\omega_2
      -\omega_1\Gamma\spr 2.}]_3\omega_1)=\frac{1}{6}(\kappa_2+63\lambda_1^2-16\lambda_1\delta+\delta^2)}
      where $\delta$ is the entire boundary and we have used Mumford's formula $\kappa_1=12\lambda_1-\delta$.
       This expression computes the hyperelliptic class
       correctly in the interior, i.e. on the open set of smooth
fibres in $B$ but to be correct everywhere must be augmented by the appropriate boundary contributions, 
that we proceed to compute via replacing $\be$ by $_\a\be$.
The strategy will be, more or less, to represent
\[_\a\be=((\be_{\vtheta_{2,1}})_{\theta_{2,2}})_{\theta_{1,3}}.\]
This strategy will require some modification due to the double locus of the (1,3) boundary, denoted $\Delta_{1,1}$
by Faber.
Note that, because the boundary locus $\del_{\vtheta_{2,1}}$ has codimension 2, its azimuthal modification does not
'interact' with the others; i.e.  we have
\[p(_\a\be)-p((E_{\theta_{2,2}})_{\theta_{1,3}})=p(\bE_{\vtheta_{2,1}})-p(\bE).\]
Similarly, 
\[p((E_{\theta_{2,2}})_{\theta_{1,3}}-p((E_{\theta_{2,2}})-
(p(E_{\theta_{1,3}})-p(E))\]
is a multiple of $[B(\theta_{2,2}, \theta_{1,3})]$.
\par
We note that, by another formula of Mumford \cite{Mu}, 
we have $\delta_0.\delta_2=(10\lambda_1-2\delta_1).\delta_2$.
Also, $\delta_2^2=-\psi_{\Delta_2}$ (Faber) and 
\eqspl{delta1-squared}
{\delta_1^2=-\psi_{\Delta_1}+2\delta_{1,1}}
by Faber's result together with the fact that $\Delta_1$ has normal crossings and multiplicity 2
along the interior of $\Delta_{11}$.


 \subsubsection{The (2,1) bisep}
 Here we will compute 
 the unique term of noncompact type: viz. the contribution of the locus 
 $B(\vtheta)\subset \Delta_0, \vtheta=\vtheta_{2,1}$
 corresponding to curves $\lf{X}, \rt{X} $ of genera 2, 1 meeting in 2 points (note the latter
 specification determines an orientation on $\vtheta$). This corresponds to 
 an excessive locus in $X\sbc 2._B$.\par
 Note that $\lf{D}\rt{D}$ collapses in the unblown-up Hilbert scheme $X\sbr 2._B$
 while the normal bundles to $\lf{D}, \rt{D}$ are pulled back from there: hence
 $\lf{D}^2\rt{D}=\lf{D}\rt{D}^2=0$. Also, terms containing Chern classes
 of $E, \lf{E}, \rt{E}$ cannot contribute to $[B(\vtheta)]$. 
 From this it is easy to see that the only term
 contributing to the fundamental class $[B(\vtheta)]$ on $B$
 is 
 \[-[\rt{D}^2c_1(\Lambda_1(\omega))\omega_1]=-(\omega_1
 +\omega_2-\Gamma)\omega_1|\O_{\rt{D}}(1)\] 
($\lf{D}^2$ fails to occur because $\rt{E}^0$ has rank 1). The latter
 pushes down to $-2[B(\vtheta)]$ as $\omega$ has degree 2 on $\rt{X}$. 
 For the contribution to the hyperelliptic locus in $B$ one must divide by $6=\deg(\omega)$,
 yielding $(-1/3)[B(\vtheta)]$. In Faber's notation, $[B(\vtheta)]$ is denoted $\gamma_1$. Thus,
 \eqspl{bisep-final}{p(\be_{\vtheta})-p(\be)=\frac{-1}{6}2[B(\vtheta)]=\frac{-1}{6}2\gamma_1.}
 As noted above, because $\del_{\vtheta}$ has codimension 2, the same formula holds with $\be$
 replaced by its azimuthal modification with respect to any collection of seps.


  \subsubsection{The (2,2)   sep}
  Here we will use \eqref{(2,2)-sep}
  \par To compute  $p_1(\be_\theta)-p_1(\be)$ for $\theta=\theta_{2,2}$, 
  note that $\lf{D}\omega_1^2\omega_2=(\lf{\omega}+\lf{\theta})_1^2
  (\lf{\omega}+\lf{\theta})_2$ has base image $(2\lf{g}-1)i_{\del_\theta *}(\lf{\kappa_1}+\lf{\psi})$
  (here $\lf{g}=2$, but it's best to plug this in only at the end). Ditto $\lf{D}\omega_1\omega_2^2$. 
  Similarly, $\lf{D}\omega^2\Gamma$
  has base image $\lf{\kappa}_1+\lf{\psi}$.
  Therefore in all, 
  \eqspl{}{
 p_1(\be_\theta)-p_1(E)=(-1/6)6i_{\del_\theta *}(\lf{\kappa}+\rt{\kappa}+\psi).
 }

For $p_2$, note that base classes like $\lambda_1$ or $\psi$ on $\lf{D}$ ,
when multiplied by $\omega_1\omega_2-\omega\Gamma$, yield a coefficient of
$(2\lf{g}-1)(2\lf{g}-2)=2\binom{2\lf{g}-1}{2}$, which is the fibre degree of $\omega_1\omega_2-\omega\Gamma$ (here $\omega_1^2$ has
fibre degree 0). On the other hand a term like $\lf{\theta}$, which becomes $\lf{\theta}_1+\lf{\theta}_2$
on the ordered version, yields $\lf{\theta}_2(\lf{\omega}+\lf{\theta})_1^2$ 
when multiplied by $\omega_1^2$ and zero when multiplied by 
$\omega_1\omega_2$ or $\omega\Gamma$
(recall that $\omega\theta=0$), therefore it pushes down as above to 
$\lf{\kappa}+\lf{\psi}$. So in all we get:
\eqspl{}{
p_2(\be_\theta)-p_2(\be)=(-1/6)i_{\del_\theta*}(-24\lambda+18\psi+2(\lf{\kappa}+\rt{\kappa}+\psi)).
}


Finally, for $p_3$, it comes from the part involving
$\lf{\theta}\sbr 2.$ times a base class, etc., which yields in total
\eqspl{}{
p_3(\be_\theta)-p_3(\be)=(-1/6)6i_{\del_\theta*}(\lambda).
}

All in all, we get
\eqspl{}{
p(\be_\theta)-p(\be)=(-1/6)i_{\del_\theta*}(26\psi+8(\lf{\kappa}+\rt{\kappa})-30\lambda).
}
Using Faber's notation and Mumford's formula in genus two:
$\kappa_1=2\lambda_1+\delta_1$, which applies to $\lf{\kappa}$
and $\rt{\kappa}$,  this can be written as
\eqspl{(2,2)-final}{
(-1/6)(-14\lambda.\delta_2+8\delta_{2}\delta_1-26\delta_2^2).
}
Because $\delta_0.\delta_2=(10\lambda-2\delta_1).\delta_2$, note that this, combined
with \eqref{genus4-umodified},  already yields $18/6, 27/6$ for the coefficients of $\lambda.
\delta_2, \delta_2^2$ in the hyperelliptic class on $B$, in accord with 
Faber-Pandharipande.
\subsubsection{The (1,3) sep}

Set $\theta(2)=\theta_{2,2}, E=\be_{\theta_{2}}, \theta:=\theta(1):=\theta_{1,3},
\lf{D}=\lf{D}(\theta)$, etc.
Here we will compute $p(E_{\theta_{1,3}})-p(E)$. We will assume temporarily that
the the locus $\del_\theta$ has no double points, i.e. that $\theta$ has at most one point in any fibre.
Over $\del_{\theta_{2,2}, \theta_{1,3}}$ the curve splits as
\[X=X_{1,1}\cup _{\theta_1}X_{1,2}\cup_{\theta_2}X_2\]
with components of genera $1,1,2$ respectively. Let's denote the corresponding
codimension-2 loci in the Hilbert scheme by $D_{1,1}\subset\lf{D}$ and $ D_{1,2},
 D_2\subset\rt{D}$. Note that each of these pushes down to $\delta_2.\delta_1$ and \emph{not} $\delta_{1,1}$.
Using \eqref{(2,2)-sep}, we can write the restriction of $E$ on the (1,3) boundary
as $\rt{E}+\lf{E}$ where, neglecting negligible terms,
\eqspl{}{
c(\rt{E}|_{\lf{D}})&\sim c(\rt{\be})+3[D_{1,1}],\\
c(\lf{E}|_{\rt{D}})&\sim c(\lf{\be})+[D_2].
} Here $c_1(\lf{\be})=\lf{\lambda}$ is just the pullback of the Hodge class on 
$\overline{\mathfrak M}_{1,1}$. 
We have used the obvious fact $\theta(2).D_{1,1}=0$.
Now we compute, setting $\rt{\lambda}=c_1(\rt{\be}),
\psi=\psi(\theta_{1,3})$ :
\eqspl{}{
Q(\lf{D}, \rt{E})&=3-3(\psi+\theta_1\sbr 2.)+2\rt{\lambda}+6D_{1,1}+\theta_1\sbr 2.(2\psi-\rt{\lambda}-3
D_{1,1}),\\
\frac{1}{c(\rt{E})}&=1-\rt{\lambda}-3D_{1,1},\\
\lf{q}:=\frac{Q(\lf{D}, \rt{E})}{c(\rt{E})}&=3-3\psi-3\theta_1-\rt{\lambda}-3D_{1,1}
+\theta_1\sbr 2.(2\psi-\lf{\psi}+2\rt{\lambda}+6D_{1,1}).
}
\eqspl{}{
\rt{E}^0&\sim\rt{E}-\rt{\psi}-D_{1,1},\\
Q(D, \rt{E}^0(\lf{D}))&\sim 2-3(\psi+\lf{\theta}_1\sbr 2.)+\rt{\lambda}-\rt{\psi}+3D_{1,1},\\
\frac{1}{c(\rt{E}^0(\lf{D}))}&\sim 1-\rt{\lambda}+\rt{\psi}-2D_{1,1}+(\psi+\lf{\theta}_1\sbr 2.)
(1+\rt{\lambda}-\rt{\psi}+2D_{1,1}-2\psi),\\
\lf{q}^0:= \frac{Q(D, \rt{E}^0(\lf{D})}{c(\rt{E}^0(\lf{D})}&\sim 2+(\psi+\lf{\theta}_1\sbr 2.)
-2D_{1,1}-\rt{\lambda}+\rt{\psi}+(\psi+\lf{\theta}_1\sbr 2.)(-\rt{\lambda}+\rt{\psi}-2
D_{1,1}).
}
\eqspl{}{
\lf{m}(\theta_1, E)&=i_{\lf{D}*}(\lf{q}+\lf{q}^0-(\psi+\lf{\theta}_1\sbr 2.)\lf{q}\lf{q}^0)\\
&=i_{\lf{D}*}(5-8(\psi+\lf{\theta}_1\sbr 2.)-5D_{1,1}-2\rt{\lambda}+\rt{\psi}
+\lf{\theta}_1\sbr 2.(8\psi+6\rt{\lambda}+16D_{1,1}-2\rt{\psi}-4\lf{\psi})),\\
\rt{m}(\theta_1, E)&=i_{\rt{D}*}(1-\lf{\lambda}-2D_2-2\rt{\theta_2}\sbr 2. D_2).
}
Then
\eqspl{}{
c(E_{\theta_1})=c(E)(1+\lf{m}(\theta_1, E))(1+\rt{m}(\theta_1, E)).
}
Now we can compute as in the previous subsection:
\eqspl{}{
p_1(E_\theta)-p_1(E)=\frac{-1}{6}(5\lf{D}+\rt{D})(\omega_2^2\omega_1+\omega_1^2\omega_2-4\omega^2\Gamma)=
\frac{-1}{6}(-10(\lf{\kappa}+\lf{\psi})+6(\rt{\kappa}+\rt{\psi})).
}
For $p_2$, the portion involving $c_1(E)m_1$ yields $-(24\delta_2+20\lambda)\delta_1$; the portion involving
$m_2$ yields $+8(\lf{\kappa}+\lf{\psi})+(12\delta_2+20\lf{\lambda})\delta_1$ so, considering
that $\lf{\kappa}=0$ and $\lf{\lambda}=\lf{\psi}$  in genus 1, we have in total,
\eqspl{}{
p_2(E_\theta)-p_2(E)=\frac{-1}{6}(-12\delta_2-20\lambda_1+28\lf{\psi})\delta_1.
} 
For $p_3$, the part coming from $c_2(E)m_1$ yields $-10\delta_1\delta_2$ (arising from
$-2\theta(2).(\lf{D}(2)+\rt{D}(2))$).
The part coming from $c_1(E)m_2$, yields: $-8\lambda$ from $\lambda(-8\theta(1).\lf{D}(1))$, 
$-24\delta_2\delta_1$ arising from $3(\lf{D}+\rt{D}(2)).(-8\theta(1).\lf{D}(1)$,
and finally $18\delta_1\delta_2$ arising from $3\lf{D}(2).2D_2$; total: $(-8\lambda-6\delta_2).\delta_1$. Finally, the part coming from $m_3$ yields:$(8\psi+6\rt{\lambda}-2\rt{\psi}+16\delta_2).\delta_1$ (arising from multiples of $\theta(1).\lf{D}$ and $-6\delta_2\delta_1$ (arising from $-2D_2\theta_2$). Thus in total:
\eqspl{}{
p_3(E_\theta)-p_3(E)=\frac{-1}{6}\delta_1.(-6\delta_2-8\lambda+8\psi+6\rt{\lambda}-2\rt{\psi}).
}
Summing up and using \[\rt{\kappa}=12\rt{\lambda}-\rt{\delta_0}-\rt{\delta_1}=
12\lambda.\delta_1-12\lf{\lambda}-\rt{\delta_0}-\delta_2.\delta_1-\delta_{1,1}=(12\lambda-\delta_0-\delta_2).\delta_1-\delta_{1,1},\]
we get
\eqspl{(1,3)-simple}{
p(E_\theta)-p(E)=\frac{-1}{6}(4\lf{\psi}+12\psi+50\lambda-6\delta_0-24\delta_2-6\delta_{1,1})_{\delta_1}.
}

\subsubsection{The (1,2,1) improper bisep} To complete the calculation we must compute the
contribution from the locus $\del_{1,2,1}$ corresponding to the improper bisep of type (1,2,1). This is denoted by 
$\Delta_{1,1}$ by Faber and equals the double locus of the boundary divisor $\Delta_1$. 
Note that in \eqref{fab-pan-form}, rewritten via \eqref{delta1-squared}, 
the total coefficient of this cycle is $(1/6)30$ where 30 comes from $26=2.13$ 
from the $\delta_1^2$ plus 4.
In our azimuthal approach,
this locus makes
an additional contribution stemming from the fact that the azimuthal bundle is modified
there twice, once from each of the seps of type (1,3). To compute this contribution, we may assume
by a suitable base change that the global $(1,3)$ sep splits in two components, 
say $\theta', \theta"$ so that the (1,2,1) bisep locus is just
$\del_{\theta', \theta"}=\del_{\theta'}\cap\del_{\theta"}$ and the curve over this locus splits as
\[X_{\theta', \theta"}=X'(1)\cup_{\theta'}X(2)\cup_{\theta"}X"(1)\]
(respective genera 1,2,1). Denote the corredponding
Hilbert loci of schemes contained in the respective curve 
by $D'(1), D(2), D"(1)$.  We then calculate the $\del_{1,2,1}$ term in
\eqspl{}{(&1+\lf{m}(\theta', E)+\rt{m}(\theta', E))(1+\lf{m}(\theta", E_{\theta'})+\rt{m}(\theta", E_{\theta'})).}

The first factor is analogous to the above for $\theta_1$; the second is likewise analogous, except for the additional
term of $+6\theta".D(1)$. Then, neglecting terms that do not affect
the coefficient of $\delta_{1,1}$, the above product becomes
\eqspl{}{
&(1+5\lf{D}(\theta')-8\theta'\lf{D}(\theta')+\rt{D}(\theta')).(1+5\lf{D}(\theta")-8\theta"\lf{D}(\theta")+\rt{D}(\theta"))\sim\\
&-8\theta'.D'(1)-2\theta"D"(1)+D(2).
} This contributes a term of 
\eqspl{(1,3)-double}{(-1/6)(-22)\del_{1,1}} to $p(_\a E)$.
Adding this to $(-1/6)(-6\delta_{1,1})$ from \eqref{(1,3)-simple} and $(-1/6)(-2)\delta_{1,1}$ from
\eqref{genus4-umodified}, we get a total $\delta_{1,1}$ term of $(-1/6)(-30)\delta_{1,1}=(1/6)(13.2+4)\delta_{1,1}$
matching \eqref{fab-pan-form}.
\subsubsection{Finale} Now adding together \eqref{genus4-umodified},  \eqref{bisep-final}, \eqref{(2,2)-final}, \eqref{(1,3)-simple} and
\eqref{(1,3)-double} we recover \eqref{fab-pan-form}.\qed

\newsubsection{W-bundles}\lbl{w-bundles-sec}

As discussed in \S\ref{normal-blowup-sec}, the exceptional divisors in an
$\mathcal S$-stratified blowup have a $W_n$-bundle structure
and the quantitative, enumerative aspect of
this structure is involved in computing azimuthal modifications
corresponding to biseps (see \eqref{azi-bundle-bisep}). Our purpose here
is to study $W_n$-bundle structures and their intersection theory generally.\par
Let $\L=(L_0, ...,L_n)$ be a collection of line bundles
on a variety $Z$, and set, for $I\subset\{0,...,n\}, |I|>1,$
\eqsp{L_+:=\bigoplus \limits_{i=0}^n L_i
\supset L_{I}=\bigoplus\limits_{i\in I}L_i,\\
P_+=\P(L_+),\  P_{I}=\P(L_I^*), \ P=P_+\times\prod\limits_{2\leq |I|\leq n}P_{I}\\
} (all products relative over $Z$)
and let $Q_I$ be the tautological quotient bundle of rank $|I|-1$ of $L_I^* $, i.e.
the dual of the tautological subbundle on $P_I$ (pulled back to $P$). 
Then on $P$, we have  natural composite maps
\[\O_{P_+}(-1)\to L_+^*\to L^*_{I}\to Q_{I}\]
which together induce
\eqspl{W'-zero-locus}{\O_{P_+}(-1)\to\bigoplus_{2\leq |I|\leq n}Q_{I}.}
The zero locus of the latter map
 will be temporarily  denoted by $W'$.
 It consists of collections $(h; (h_{I}))$ where
 $h$ is a hyperplane in $L_+$ and $h_{I}$ is a hyperplane
 in $L_{I}$ contained in $h$.
The image of $W'$ in $\prod\limits_{i<j}P_{i,j}$
is the locus denoted earlier (see \S\ref{normal-blowup-sec}, \eqref{W[L]-def-eq})
by
 $W[\L]=W[L_0,...,L_n]$.
Note that the closure of the graph of a linear projection $P_+\dashrightarrow P_{I}$ can be identified with the blowup of
$\P(L/L_{I})\subset P_+$. Applying this
to the components of the projection of $W'$ to $\prod\limits_{|I|=2} P_{I}$,
it follows that
the image of the projection
can be identified with $W[\L]$, i.e. the normal blowup, hence
 by Proposition \ref{stratified-is-normal-prop}, also
 with the $S$-stratified blowup of $P_+$ corresponding to the
stratification by coordinate planes, which is smooth.

\begin{lem}
(i) $ W'$ projects isomorphically to its image  $W[\L]\subset\prod\limits_{|I|=2}P_{I}$;\par
(ii) this image is equal
 to the degeneracy ( rank $\leq n$) locus of the natural map
 \[\bigoplus\limits_{|I|=2}Q^*_{I}\to L_+\]
 and this map has rank at least $n$ everywhere.\par
\end{lem}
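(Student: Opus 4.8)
The two claims are essentially local over $Z$, so I would work over an affine open where all the $L_i$ are trivialized; the assertions are then statements about the toric variety $W_n = W[\mathbb C,\ldots,\mathbb C]$ sitting in $P_+ \times \prod_{2\le|I|\le n}P_I$, equivariant for the projection maps, and the general case follows by the usual locally-trivial-fibration patching (everything in sight is built from the $L_i$ functorially). So from now on assume $L_i$ trivial.

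For part (i): the map \eqref{W'-zero-locus} whose zero locus is $W'$ factors through the subbundle $\bigoplus_{|I|=2}Q_I$ in the sense that a collection $(h;v_I : 2\le|I|\le n)$ with $v_I \subset h$ and $v_I \cap L_J$ mapping into $v_J$ for $J\subset I$ is determined by the pairwise data $(h; v_{ij} : i<j)$: indeed, given the $v_{ij}$, the hyperplane $v_I \subset L_I$ must contain each $v_{ij}$ for $\{i,j\}\subset I$, and one checks (on the locus where $W'$ is nonempty, which by the computation in Lemma \ref{nbl} is exactly where the $v_{ij}$ are mutually ``compatible around triangles'') that the span of the $v_{ij}$ inside $L_I$ has codimension exactly $1$, hence equals $v_I$, and likewise $h$ is recovered as the span of all $v_{ij}$'s (or is $L_+$ itself when $n=$ the ambient, forced by $v_I\subset h$). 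Thus the projection $W' \to \prod_{|I|=2}P_I$ is injective on points; since $W'$ is smooth (it is a zero locus of a section of a bundle cutting it out in the expected codimension — this is exactly the normal/stratified blowup identification recalled above, which is smooth by Proposition \ref{stratified-is-normal-prop} and Lemma \ref{nbl}) and the image $W[\L]$ is the closure of the graph of the rational map $P_+ \dashrightarrow \prod P_I$, which is also smooth of the same dimension, a bijective morphism of smooth varieties over the (normal) base is an isomorphism. I would phrase this last step as: $W' \to W[\L]$ is proper, bijective, and both are smooth, hence it is an isomorphism by Zariski's main theorem.

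For part (ii): dualize. The composite $\O_{P_+}(-1) \to L_I^* \to Q_I$ dualizes to $Q_I^* \to L_I \to L_+ \to \O_{P_+}(1)$, and assembling over all $|I|=2$ gives a map $\bigoplus_{i<j}Q_{ij}^* \to L_+$ (pulled back to $P$, then we will see it descends/restricts appropriately). A point of $P$ lies in the degeneracy locus of $\bigoplus_{i<j}Q_{ij}^*\to L_+$, i.e. this map has rank $\le n$, precisely when the $n+1$-dimensional target is not spanned by the images of the rank-$1$ bundles $Q_{ij}^*$; the image of $Q_{ij}^*$ in $L_+$ is the line $\ell_{ij} := v_{ij}^\perp \cap (L_i\oplus L_j)$ (the line in $L_i\oplus L_j$ annihilating the hyperplane $v_{ij}$). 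One computes: the span of all $\ell_{ij}$ has dimension $\le n$ iff there is a common hyperplane $h\subset L_+$ containing all of them, iff the $v_{ij}$ arise by restriction from a single hyperplane $h$ — which is exactly the condition defining $W[\L]$ via part (i). This identifies the degeneracy locus with $W[\L]$. For the ``rank at least $n$ everywhere'' claim: I would observe that for any point, already the $n$ lines $\ell_{01},\ell_{12},\ldots,\ell_{n-1,n}$ (a ``path'') span a subspace of dimension $\ge n$ — each new $\ell_{i,i+1}$ meets $L_i\oplus L_{i+1}$ and $L_{i+1}$ is not annihilated unless $v_{i,i+1}\supset L_{i+1}$, i.e. unless we are at a ``singular'' azimuth; a short case analysis on which $v_{i,i+1}$ are singular, together with the fact that on $W'$ at most a controlled configuration of them can be singular at once (again from the local coordinates in the proof of Lemma \ref{nbl}), shows the span is always $\ge n$-dimensional.

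The main obstacle I anticipate is \textbf{not} the set-theoretic identifications but pinning down that the morphism in (i) is an isomorphism of \emph{schemes} rather than just a bijection — equivalently, that $W'$ is reduced and the section \eqref{W'-zero-locus} cuts it out in expected codimension with no embedded components. The clean way around this is to never introduce $W'$ as an abstract zero scheme but to \emph{define} $W[\L]$ as the closed graph (hence manifestly reduced and irreducible, as the closure of the graph of a rational map from a smooth irreducible variety), invoke Lemma \ref{nbl} / Proposition \ref{stratified-is-normal-prop} to know it is smooth and agrees with the normal blowup, and then simply \emph{verify} that it satisfies the incidence equations of \eqref{W'-zero-locus} and has the right dimension, so that it is a connected component of the zero scheme; irreducibility of the zero scheme's dominating component (which is all we ever use, via the ``main component'' convention already in force in the paper) then gives equality. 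The rank-$\ge n$ statement in (ii) is genuinely elementary once the ``path spans $\ge n$'' observation is set up, and I would keep that to a couple of lines.
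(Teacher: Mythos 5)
Your overall strategy coincides with the paper's: both reduce the lemma to the linear-algebra statement that the images of the lines $Q_{ij}^*$ in $L_+$ always span a subspace of dimension at least $n$, and then get (i) from uniqueness of the spanned hyperplane on the degeneracy locus together with smoothness of $W[\L]$ (which the paper, like you, takes from the identification with the stratified/normal blowup). The difference is in how that spanning statement is proved, and this is where your argument has a genuine gap.

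Your "path" argument is false: the $n$ lines $\ell_{01},\ell_{12},\dots,\ell_{n-1,n}$ need \emph{not} span an $n$-dimensional subspace. Already for $n=3$, take $\ell_{01}=\ell_{12}=L_1$ and $\ell_{23}=L_2$ (perfectly legitimate, if "singular", points of $P_{01}\times P_{12}\times P_{23}$); the span is $L_1+L_2$, of dimension $2<n$. The collapse is rescued only by the \emph{off-path} edges (here $\ell_{03}\subset L_0\oplus L_3$ cannot lie in $L_1+L_2$), so any correct proof must use all $\binom{n+1}{2}$ edge-points. Moreover, the rank claim in (ii) is asserted \emph{everywhere} on $\prod_{|I|=2}P_I$, so you cannot retreat to a case analysis of which azimuths can be simultaneously singular "on $W'$". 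The paper's proof handles exactly this by induction on $n$: if the points $A_{ij}$ with $i,j\le n$ already span the face $\P^n$, one is done; otherwise they span a unique hyperplane $H$ of that face, some coordinate vertex $e_k$ lies off $H$, hence $P_{k,n+1}\cap H=\emptyset$ and the point $A_{k,n+1}$ increases the span. You would need to replace your path argument by this (or an equivalent) induction; without it the central claim of the lemma is unproved.

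Two smaller remarks. First, in (i) you defer the statement "the span of the $v_{ij}$, $\{i,j\}\subset I$, in $L_I$ has codimension exactly one" to "one checks" — but that is again precisely the spanning fact above (applied to the sub-simplex on $L_I$), so it cannot be taken as an input. Second, your scheme-theoretic worry about $W'$ is actually well-founded: for $n\ge 3$ the zero scheme of \eqref{W'-zero-locus} acquires excess components over points where $h\supset L_I$ for some $|I|\ge 3$ (there the component map to $Q_I$ vanishes identically and $v_I$ is unconstrained), so the literal projection $W'\to\prod_{|I|=2}P_I$ is not injective; your proposal to work with $W[\L]$ as the closed graph and merely verify it is the dominating component of the zero locus is a reasonable repair, and on this point you are more careful than the paper. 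But it does not compensate for the missing spanning argument.
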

\begin{proof} By smoothness of $W[\L]$,
it suffices to prove (ii) plus the bijectiveness part of (i).
This in turn is
a consequence of following elementary fact: given a collection of points $A=(A_{i,j}\in P_{i,j})$, one on each line of the 1-skeleton of the coordinate simplex in $\P^n$, they span \emph{at least} a hyperplane. The proof is by induction on $n$, the case $n=2$ being obvious. Assuming the case for $n$, consider the case of $n+1$.
If those among the  $A_{i,j}$ that lie on $\P^n$, i.e. those
with $i,j\leq n$,
span $\P^n$, there is nothing to prove. Else, by induction those points lie
on a unique hyperplane $H\subset\P^n$. Then there exists $k$
such that the $k$-th coordinate point $e_k\in\P^n\setminus H$.
But then $P_{k,n+1}\cap H=\emptyset$ so $A_{k,n+1}\not\in H$,
therefore $A$ spans at least a hyperplane in $\P^{n+1}$.\end{proof}

Now set $q_+=c_1(\O_{P_+(1)}), q_{I}=c_1(\O_{P_I}(1))$. Then 
because $W'$ is the zero locus of the map \eqref{W'-zero-locus}, the fundamental class of $W'$ on $P$ is
\eqspl{}{ [W']=\prod\limits_{\substack{I\\ 2\leq |I|\leq n}}c_{|I|-1}(Q_I(q_+))
=\prod\limits_{\substack{I\\ 2\leq |I|\leq n}}[\frac{\prod\limits_{i\in I}(1-[L_i]+q_+)}{1-q_I+q_+}]_{|I|-1}.
}
This formula easily allows us to compute intersections of standard classes on $W$, as they
are pulled back via $W'\hookrightarrow P$:
\begin{prop}We have
\eqspl{product-on-W}{
\pi_{(W[\L]\to Z)*}(q_+^m\prod q_{I}^{m_{I}})
=\pi_{(P\to Z)*}(q_+^m\prod\limits_{\substack{I\\ 2\leq |I|\leq n}}q_I^{m_I}[\frac{\prod\limits_{i\in I}(1-[L_i]+q_+)}{1-q_I+q_+}]_{|I|-1}).
}
\end{prop}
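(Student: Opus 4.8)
The plan is to reduce the computation of push-forwards on $W[\L]$ to push-forwards on the ambient product bundle $P = P_+\times\prod_{2\le|I|\le n} P_I$, using the fact established just above that $W[\L]$ is (isomorphic to) the zero locus $W'$ of the tautological bundle map
\[
\O_{P_+}(-1)\longrightarrow \bigoplus_{2\le |I|\le n} Q_I
\]
on $P$, a map whose zero scheme has the expected codimension $\sum_{2\le|I|\le n}(|I|-1)$ (this follows from smoothness of $W[\L]$ together with the dimension count, since $\dim W[\L]=\dim Z + n$ equals $\dim P$ minus that codimension). First I would record that the inclusion $j\colon W[\L]\hookrightarrow P$ is a regular embedding cut out by a section of the locally free sheaf $\mathcal Q:=\bigoplus_{2\le|I|\le n} Q_I(q_+)$ of the stated rank, so that $j_*[W[\L]] = c_{\mathrm{top}}(\mathcal Q)\cap[P]$ by the standard self-intersection / Thom class formula (e.g. \cite{ful}, as already invoked in \S\ref{excess-porteous-sec}), and that $c_{\mathrm{top}}(\mathcal Q)=\prod_{2\le|I|\le n} c_{|I|-1}(Q_I(q_+))$ since Chern classes of a direct sum multiply. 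This is exactly the formula $[W']$ displayed immediately before the Proposition, rewritten via the exact sequence $0\to \O_{P_I}(-1)\to L_I^*\to Q_I\to 0$ (twisted by $q_+$) as
\[
c_{|I|-1}(Q_I(q_+)) = \Bigl[\frac{c(L_I^*(q_+))}{1-q_I+q_+}\Bigr]_{|I|-1}.
\]

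Second, I would invoke the projection formula for the morphisms $\pi_{W}\colon W[\L]\to Z$ and $\pi_P\colon P\to Z$, which factor as $\pi_W = \pi_P\circ j$. The classes $q_+$ and $q_I$ appearing on the left-hand side of \eqref{product-on-W} are by definition the restrictions to $W[\L]$ of the corresponding classes on $P$ (the hyperplane classes of $P_+$ and of the various $P_I$), i.e. $q_+|_{W}=j^*q_+$, $q_I|_{W}=j^*q_I$. Hence
\[
\pi_{W*}\Bigl(q_+^m\prod_I q_I^{m_I}\Bigr)
= \pi_{P*}\, j_*\Bigl(j^*\bigl(q_+^m\textstyle\prod_I q_I^{m_I}\bigr)\Bigr)
= \pi_{P*}\Bigl(q_+^m\textstyle\prod_I q_I^{m_I}\cdot j_*[W[\L]]\Bigr),
\]
and substituting $j_*[W[\L]] = \prod_{2\le|I|\le n}\bigl[\tfrac{c(L_I^*(q_+))}{1-q_I+q_+}\bigr]_{|I|-1}$ gives precisely the right-hand side of \eqref{product-on-W}. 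One should be mildly careful with the bookkeeping: the right-hand side is an expression in $q_+$ and all the $q_I$ to be pushed forward from $P$, and the only subtlety is that $\pi_{P*}$ of a product of hyperplane powers over the various projective-bundle factors is computed factor by factor via the Segre/push-forward formula for $\P(L_I^*)\to Z$ — but this is exactly what is meant by the notation $\pi_{(P\to Z)*}$, so no further manipulation is needed in the statement itself.

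\textbf{Main obstacle.} The one genuine point requiring care — and the step I expect to be the crux — is the identification $W[\L]\cong W'$ as schemes, i.e. that the projection $W'\to W[\L]\subset\prod_{|I|=2}P_{I}$ is an isomorphism and that $W[\L]$ has the expected codimension in $P$, so that the excess-intersection machinery degenerates to the naive Thom-class formula with no correction term. Much of this is already supplied by the Lemma preceding the Proposition (bijectivity of $W'\to W[\L]$, and the "rank at least $n$ everywhere" statement which controls the degeneracy locus), together with the smoothness of $W[\L]$ coming from Proposition \ref{stratified-is-normal-prop}; what remains is to observe that a bijective morphism of smooth varieties of the same dimension over a base, which is an isomorphism on the open dense stratum (the torus $[L_0,\dots,L_n]$ where the map \eqref{W[L]-def-eq} is an embedding), is an isomorphism, and that a regular section of a locally free sheaf whose zero scheme has the expected codimension is automatically a regular embedding. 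Once that is in hand, everything else is the formal projection-formula bookkeeping above.
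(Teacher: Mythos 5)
Your proof is correct and follows essentially the same route as the paper, which leaves the argument implicit: it displays the class $[W']=\prod_I c_{|I|-1}(Q_I(q_+))$ as the top Chern class of the bundle whose section cuts out $W'$, identifies $W'$ with $W[\L]$ via the preceding Lemma, and then the Proposition is exactly the projection formula applied to $j\colon W[\L]\hookrightarrow P$. The point you single out as the crux — that $W'\to W[\L]$ is an isomorphism and that the zero scheme has the expected codimension, so the naive Thom-class formula applies without excess correction — is precisely the content the paper delegates to that Lemma, and your filling-in of the remaining details (the twisted Euler sequence giving $c(Q_I(q_+))=c(L_I^*(q_+))/(1-q_I+q_+)$, and the factor-by-factor Segre pushforward) is accurate.
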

\begin{rem}
The argument of $\pi_{(P\to Z)*}$ in \eqref{product-on-W} is a polynomial in $q_+$ and the
$q_I$.
Note that (where $s.$ denotes Segre class)
\eqsp{\pi_{(P\to Z)*}(q_+^m \prod\limits_I q_I^{m_I})=s_{m-n}(L_+)\prod\limits_Is_{m_I-|I|+1}(L_I)\\
=[\prod\limits_{i=0}^n\frac{1}{1+[L_i]}]_{m-n}
\prod\limits_I[\prod\limits_{i\in I}\frac{1}{1+[L_i]}]_{m_I-|I|+1}
.}
Consequently, the LHS of \eqref{product-on-W} can be computed as
a polynomial in the $c_1(L_i)$.
\end{rem}
By Theorem \ref{azi-hilb-thm} and Remark \ref{azi-hilb-rem}, the classes appearing in \eqref{product-on-W} are precisely what is needed to
compute powers of the exceptional divisors on the azimuthal Hilbert scheme. Indeed  in the notation
of that Theorem, we have, identifying $\Theta$ with $\{0,...,n\}$:
\[c_1(L_\vtheta(\Theta))=q_+, \  c_1(L_\vtheta(\Theta, \Theta'))=q_{\Theta'}.\] 
The self-intersection of $\Xi_\vtheta(\Theta)$ can be computed using \eqref{conormal-Xi} and Remark \ref{azi-hilb-rem}.

%
%
%
%
\begin{comment}
\pagestyle{empty}
\bibliographystyle{amsplain}
\bibliography{mybib}
\end{document}